\documentclass[11pt, a4paper]{article}
\usepackage[utf8]{inputenc}
\usepackage{mathtools}
\usepackage{amsmath}
\usepackage{amsthm}
\usepackage{algorithm}
\usepackage{algorithmic}
\usepackage{amssymb}
\usepackage{parskip}
\usepackage{stackengine}
\mathtoolsset{showonlyrefs}
\usepackage{tikz}
\usepackage{subcaption}
\usepackage[a4paper, lmargin=2cm, rmargin=2cm, tmargin=2.5cm, bmargin=2.5cm, marginpar=3.0cm]{geometry}
\usepackage{comment}
\usepackage{mathrsfs} 
\usepackage{dsfont}
\usepackage{esint}
\usepackage[export]{adjustbox}
\usepackage{multirow}
\usepackage{mathtools}
\usepackage{enumitem}
\usepackage{scalerel}
\usepackage{color}
\definecolor{hanblue}{rgb}{0.27, 0.42, 0.81}
\definecolor{mordantred19}{rgb}{0.68, 0.05, 0.0}
\definecolor{red}{rgb}{0.68, 0.05, 0.0}
\definecolor{green}{rgb}{0.0, 0.5, 0.0}
\usepackage{cite}
\usepackage[colorlinks, citecolor=hanblue,linkcolor=green]{hyperref}

\newcommand{\weakstar}{\stackrel{*}{\rightharpoonup}}

\DeclareMathOperator*{\argmin}{arg\,min}

\newcommand{\abs}[1]{|#1|}

\DeclareMathOperator{\supp}{supp}

\DeclareMathOperator{\co}{co}

\newcommand{\R}{\mathbb{R}}

\newcommand{\N}{\mathbb{N}}

\newcommand{\1}{\mathds 1}

\newcommand{\BV}{\operatorname{BV}}

\newcommand{\Id}{\operatorname{Id}}

\newcommand{\loc}{\operatorname{loc}}

\newcommand{\Lip}{\operatorname{Lip}}

\renewcommand{\div}{\operatorname{div}}

\newcommand{\dd}{\, \mathrm{d}}

\newcommand{\mres}{\mathbin{\vrule height 1.4ex depth 0pt width
0.13ex\vrule height 0.13ex depth 0pt width 1.0ex}}

\newcommand\restr[2]{{
  \left.\kern-\nulldelimiterspace
  #1 
  \vphantom{\big|} 
  \right|_{#2}
  }}

\allowdisplaybreaks

\theoremstyle{plain}
\newtheorem{thm}{Theorem}
\numberwithin{thm}{section}

\newtheorem{lemma}[thm]{Lemma}
\newtheorem{prop}[thm]{Proposition}

\newtheorem{cor}[thm]{Corollary}

\theoremstyle{definition}

\theoremstyle{remark}

\theoremstyle{definition}
\newtheorem{remarkx}[thm]{Remark}
\newenvironment{rem}
  {\pushQED{\qed}\remarkx}
  {\popQED\endremarkx}
\newtheorem{examplex}[thm]{Example}
\newenvironment{example}
  {\pushQED{\qed}\examplex}
  {\popQED\endexamplex}

\renewcommand{\epsilon}{\varepsilon}

\newcommand{\Gcal}{\mathcal{G}}
\newcommand{\Pcal}{\mathcal{P}}

\newcommand{\Acal}{\mathcal{A}}

\newcommand{\Hcal}{\mathcal{H}}
\newcommand{\Lcal}{\mathcal{L}}
\newcommand{\Mcal}{\mathcal{M}}
\newcommand{\Fcal}{\mathcal{F}}
\newcommand{\Rcal}{\mathcal{R}}
\newcommand{\Tcal}{\mathcal{T}}
\newcommand{\Kcal}{\mathcal{K}}
\DeclareMathOperator{\dist}{dist}
\DeclareMathOperator{\diam}{diam}

\renewcommand{\leq}{\leqslant}
\renewcommand{\geq}{\geqslant}

\title{From plans to maps: Nonlocal regularization of optimal transport\footnotetext{2020 Mathematics Subject Classification: 49Q22, 49J45, 46E35, 49N15.}}
\author{ Marcello Carioni\thanks{Department of Applied Mathematics, University of Twente, 7500AE Enschede, The Netherlands \\
(\texttt{m.c.carioni@utwente.nl}, \texttt{l.delgrande{@}utwente.nl}, \texttt{jose.iglesias@utwente.nl}, \texttt{hidde.schonberger@utwente.nl})},\ \ Leonardo Del Grande\footnotemark[1],\ \ José A. Iglesias\footnotemark[1],\ \ Hidde Sch{\"o}nberger\footnotemark[1]}
\date{}
\begin{document}

\maketitle

\begin{abstract}
We introduce a nonlocal regularization of optimal transport that bridges Kantorovich and Monge formulations. The regularization penalizes oscillations through an interaction kernel singular near the diagonal. We show that, for sufficiently strong singularities, every finite-energy transport plan is induced by a map, yielding equivalence between the regularized Kantorovich and Monge problems. For fractional kernels, the regularizer reduces to a fractional Sobolev seminorm that can allow jump discontinuities, leading to existence results for broad classes of source and target measures. 
We derive first-order optimality conditions and analyze the vanishing-regularization limit. The regularized functionals $\Gamma$-converge to the classical Kantorovich problem, while the first-order $\Gamma$-limit selects, whenever they exist, finite-energy Monge minimizers with minimal nonlocal energy. When no such minimizer exists, we identify a different asymptotic regime and establish a sharp scaling law in a prototypical mass-splitting example. Finally, after lifting the problem to the product of transport plans, we derive a dual formulation using the cone of copositive functions and propose a fixed-point numerical scheme illustrating the regularization.
\end{abstract}

\vskip .3truecm \noindent \textbf{Keywords.}
Optimal transport, nonlocal regularization, optimality conditions, $\Gamma$-convergence, convex duality.

\section{Introduction}

In this paper we introduce and analyze a nonlocal regularization of optimal transport that combines two features which are usually associated with the two classical formulations of the problem, namely the compactness of Kantorovich transport plans and the deterministic structure of Monge transport maps. The main idea is to penalize nonlocal oscillations of the transported mass through an interaction kernel that becomes singular near the diagonal. When the singularity is sufficiently strong, finite energy prevents mass splitting and forces transport plans to be induced by maps. This provides a variational mechanism for passing from plans to maps.

The classical Monge problem searches for a measurable map $T:\mathbb{R}^d\to\mathbb{R}^d$ transporting a probability measure $\mu$ onto another probability measure $\nu$, i.e. $T_\# \mu=\nu$,
while minimizing the transportation cost
\begin{equation}\label{eq:mongeintro}
\inf_{T_\#\mu=\nu}
\int_{\mathbb{R}^d} c(x,T(x))\dd\mu(x),
\end{equation}
for a prescribed cost $c:\mathbb{R}^d\times\mathbb{R}^d\to[0,+\infty)$. Although the Monge formulation provides an explicit correspondence between source and target points, it is not well-posed. The admissible class may be empty and even when admissible transport maps exist the infimum in \eqref{eq:mongeintro} need not be attained.
The Kantorovich relaxation \cite{kantorovitch1958translocation} overcomes these difficulties by replacing maps with transport plans. Denoting by $\Pi(\mu,\nu)$ the set of probability measures on $\mathbb R^d\times\mathbb R^d$ with marginals $\mu$ and $\nu$, one considers
\begin{equation}\label{eq:kantintro}
\inf_{\pi\in\Pi(\mu,\nu)}
\int_{\mathbb R^{2d}} c(x,y)\dd\pi(x,y).
\end{equation}
Under mild assumptions this problem admits minimizers; however, a general transport plan may split the mass located at a source point among several target locations.

The regularization proposed in this work is designed to bridge these two formulations, by adding a nonlocal regularization penalty. For $\varepsilon>0$, $q>1$, and a nonnegative measurable interaction kernel $\omega:\mathbb{R}^d\times\mathbb{R}^d\to[0,+\infty]$,
we consider the regularized Kantorovich functional
\begin{equation}\label{eq:regkantintro}
\mathcal{F}_\varepsilon(\pi) := 
\int_{\mathbb{R}^{2d}} c(x,y) \dd\pi(x,y) + \varepsilon
\int_{\mathbb{R}^{2d}}\int_{\mathbb{R}^{2d}}
\omega(x,x')|y-y'|^q
\dd\pi(x,y)\dd\pi(x',y').
\end{equation}
In the second term, a large value of $\omega(x,x')$ for nearby $x$ and $x'$  penalizes rapid oscillations of the transport. 
If a plan is induced by a transport map, i.e. $\pi_T:=(\Id,T)_\#\mu$,
then \eqref{eq:regkantintro} reduces to the corresponding regularized Monge functional
\begin{equation}\label{eq:regmongeintro}
\mathcal{G}_\varepsilon(T)
:=
\int_{\mathbb{R}^d}c\bigl(x,T(x)\bigr)\dd\mu(x)+
\varepsilon
\int_{\mathbb{R}^d}\int_{\mathbb{R}^d}
\omega(x,x')|T(x)-T(x')|^q
\dd\mu(x)\dd\mu(x').
\end{equation}
Thus, on transport maps, the regularization is a nonlocal seminorm. A prototypical example, when $\mu$ is comparable with the Lebesgue measure, is the fractional kernel
\begin{equation}\label{eq:omegaintro}
\omega(x,x')
=
\frac{1}{|x-x'|^{d+\alpha q}},
\qquad \alpha\in(0,1),
\end{equation}
for which the second term in \eqref{eq:regmongeintro} is the $q$-th power of a fractional Sobolev--Slobodeckij seminorm of order $\alpha$ \cite{di2012hitchhiker}.

The mechanism of passing from plans to maps due to the nonlocal regularization can be summarized as follows. Under the condition
\begin{equation}\label{eq:singintro}
\int_{\mathbb{R}^d}
\omega(x,x')+\omega(x',x)\dd\mu(x')
=+\infty
\qquad\text{for }\mu\text{-a.e. }x \in \R^d,
\end{equation}
every transport plan of finite energy $\mathcal{F}_\varepsilon$ is necessarily induced by a transport map (Proposition~\ref{prop:kantorovichismonge}). Consequently, although $\mathcal{F}_\varepsilon$ is defined on the compact set $\Pi(\mu,\nu)$, its effective domain consists only of Monge plans, and the regularized Kantorovich and Monge formulations are equivalent.
This observation yields an existence theory for regularized transport maps without requiring compactness directly in a space of maps. We first minimize $\mathcal{F}_\varepsilon$ over transport plans, where weak compactness is readily available; the singularity of the interaction kernel then forces every finite-energy minimizer to be induced by a map. Hence, as soon as $\mathcal{G}_\varepsilon$ is not identically $+\infty$, both formulations admit minimizers, see~Proposition~\ref{prop:existence}. The regularization therefore acts simultaneously as a regularity penalty and as a mechanism enforcing deterministic transport.

The condition that $\mathcal{G}_\varepsilon$ be finite for at least one transport map is particularly relevant. In contrast with first-order regularizations \cite{louet2014optimal, DePLouSan16}, nonlocal and fractional energies allow substantially weaker regularity and, in particular, jump discontinuities when $\alpha q<1$. This flexibility is useful in situations where the geometry of the marginals forces any transport map to be discontinuous. We establish finiteness criteria for a broad class of measures, including Ahlfors-regular and doubling measures, using kernels adapted to the intrinsic geometry of their support.
A simple example, that serves as a running example throughout this paper, illustrates this phenomenon. We consider a source measure supported on one line segment and a target measure distributed over two disjoint parallel segments. For the quadratic cost, the classical Monge problem has no minimizer, since the unique Kantorovich solution splits the mass between two translations~\cite[Section~1.4]{San15}. For every $\varepsilon>0$, however, the problem with fractional regularization admits a minimizing transport map whenever $\alpha q<1$. Such maps necessarily develop jump discontinuities, which are compatible with the fractional regularization but would be excluded by a first-order Sobolev penalty. More generally, we prove existence when $\mu$ and $\nu$ are normalized Hausdorff measures on compact Lipschitz manifolds of dimensions $m$ and $n$, respectively, with $n\leq m$, provided $0<\alpha<1/q$, cf.~Theorem~\ref{thm:twomanifolds}.

Next, extending the results in \cite{louet2014optimal}, we study the variational structure of the regularized problem by deriving corresponding Euler-Lagrange equations. Since additive perturbations do not preserve the push-forward constraint, we consider flows preserving either the source or the target measure. Post-composition with measure-preserving flows of $\nu$ yields an Euler--Lagrange identity, whereas pre-composition with measure-preserving flows of $\mu$ gives a complementary Noether-type condition. Under additional regularity assumptions, these identities can be written in strong form in terms of nonlocal operators generated by the interaction kernel.

We then investigate the behavior of the regularized problem $\mathcal{F}_\varepsilon$ as the regularization parameter $\varepsilon$ vanishes. Under a natural density assumption on finite-energy Monge plans, we show in Proposition~\ref{prop:firstordergamma} that $\mathcal{F}_\varepsilon$ $\Gamma$-converges, with respect to weak convergence of transport plans, to the classical Kantorovich functional. Thus, although every minimizer for $\varepsilon>0$ is induced by a map, in the limit $\varepsilon\rightarrow 0$ these maps may converge to a genuinely non-deterministic Kantorovich plan.
Then, we obtain in Proposition~\ref{prop:secondordergamma} a finer description by studying the first-order $\Gamma$-limit, defined as the $\Gamma$ limit of the rescaled energy
\[
\Fcal^{(1)}_\varepsilon:=\frac{\Fcal_{\varepsilon}-\min\Fcal}{\varepsilon} \quad \text{for $\varepsilon>0$.}
\]
The first order $\Gamma$-limit is finite precisely on those minimizers of the Kantorovich problem that are induced by transport maps with finite nonlocal energy 
\begin{align}
\Rcal_\mu(T) = \int_{\mathbb{R}^d}\int_{\mathbb{R}^d}
\omega(x,x')|T(x)-T(x')|^q
\dd\mu(x)\dd\mu(x').
\end{align}
Thus, whenever the Kantorovich problem admits at least one optimal map with finite nonlocal energy, the regularization acts as a selection mechanism: among all Kantorovich minimizers, it selects optimal transport maps minimizing $\Rcal_\mu$.
When no such optimal map exists, a different asymptotic regime emerges. For the line-segment example described above, we determine the sharp scaling
\begin{align}
\min \Fcal_\varepsilon-\min \Fcal \sim \varepsilon^{\frac{2}{2+q\alpha}},
\end{align}
see~Example~\ref{ex:twolinesrevisited}. In this case the regularized maps develop oscillations on increasingly fine scales in order to approximate the mass-splitting Kantorovich solution. 

A further difficulty of the proposed model is that the nonlocal term in $\mathcal{F}_\varepsilon$ is quadratic in $\pi$ and it is thus nonconvex. To recover a convex formulation and derive a duality formula, we lift the problem by introducing the product measure $\sigma=\pi\otimes\pi$. In the lifted variable the energy becomes linear, while the nonconvexity is transferred to the admissible set $\{\pi\otimes\pi:\pi\in\Pi(\mu,\nu)\}$.
We then replace this set by its weak* closed convex hull and show, under a mild assumption on the singular set of the interaction kernel, that this relaxation does not change the optimal value.
We characterize this convex hull through an infinite-dimensional analogue of the relation between completely positive and copositive matrices. Exploiting this characterization, we obtain a copositive dual formulation of the nonlocally regularized transport problem (Theorem~\ref{thm:duality}).

Finally, we complement the analytical results with a numerical scheme for discrete approximations of the problem. The quadratic interaction term suggests an iterative procedure in which one copy of the transport plan is frozen, resulting at each step in a standard linear optimal transport problem with a modified cost. We combine this fixed-point iteration with a suitable entropic regularization and illustrate numerically how the nonlocal penalty promotes increasingly deterministic transport. The experiments include discrete approximations of the splitting example above, as well as more general geometries.

\textbf{Related works.}
Regularized formulations of optimal transport have attracted considerable attention in recent years. Most of this literature considers regularizations acting directly on the transport plan. Prominent examples include entropic regularization \cite{cuturi2013sinkhorn, clason2021entropic}, quadratic regularization \cite{lorenz2021quadratically, nutz2025quadratically}, and, more generally, regularizations induced by convex entropies \cite{lorenz2022orlicz, di2020optimal}. A substantial body of work has investigated the asymptotic behavior of minimizers as the regularization parameter vanishes \cite{di2018entropic, aryan2025entropic}, the structure and regularity of minimizers for fixed $\varepsilon>0$, and the development of efficient numerical algorithms for solving the resulting regularized transport problems \cite{cuturi2013sinkhorn, altschuler2017near}. Regularized Kantorovich formulations have also been used as an approximation tool in the analysis of Monge transport maps, in particular to obtain regularity and stability results \cite{fathi2020proof}. 
On the other hand, regularizations acting at the level of the transport maps are far less explored and they focus mostly on the case of a gradient regularization of the Monge maps \cite{louet2014optimal,DePLouSan16}. We also stress that preliminary results on Kantorovich formulations as well as duality formulation and optimality conditions have been explored in Louet's thesis \cite{louet2014optimal} for the case of gradient penalties. 

In parallel, there has been a growing interest in the analysis of nonlocal functionals. Starting from the foundational theory of fractional Sobolev spaces (see \cite{di2012hitchhiker} for an overview), the study of nonlocal variational models has developed in several directions. These include functionals involving the Riesz fractional gradient \cite{horvath1959some,shieh2014new,shieh2018new, comi2019distributional, comi2023distributional}, nonlocal minimal surfaces and fractional perimeter energies \cite{caffarelli2009nonlocal, cesaroni2018minimizers}, as well as more general Gagliardo–Slobodeckij-type spaces in which the interaction between points is governed by a prescribed kernel \cite{carioni2026nonlocal, berendsen2019asymptotic, cesaroni2018minimizers, cesaroni2017isoperimetric, bellido2025nonlocal, brue2022distributional}.

\textbf{Structure of the paper.} The paper is organized as follows. In Section~\ref{sec:Nonlocal_reg_OT}, we introduce the nonlocal Monge and Kantorovich problems, establish their equivalence under assumption \eqref{eq:singintro}, prove existence and compactness results, and study the finiteness of the regularizer for Ahlfors-regular and doubling measures. Section~\ref{sec:EL} is devoted to weak and strong first-order optimality conditions. In Section~\ref{sec:gamma}, we analyze the limit $\varepsilon\rightarrow 0$ through zeroth and first order $\Gamma$-convergence and derive the sharp asymptotic scaling in specific examples. Section~\ref{sec:dual} develops the copositive dual formulation. Finally, Section~\ref{sec:numerics} presents a fixed-point numerical scheme and numerical experiments.

\textbf{Notation.} We denote by $\Mcal(\R^d)$ and $\Pcal(\R^d)$ the spaces of signed Radon measures and probability measures on $\R^d$, respectively. The total variation measure of $\mu \in \Mcal(\R^d)$ is written as $|\mu|$. Weak* convergence of a sequence $(\mu_j)_j \subset \Mcal(\R^d)$ to $\mu$ is denoted by $\mu_j \weakstar \mu$, meaning
\[
\int_{\R^d} f \dd \mu_j \to \int_{\R^d} f \dd \mu \quad \text{for all $f \in C_0(\R^d)$.}
\]
The support of a measure is denoted by $\supp \mu$, and the restriction to $A \subset \R^d$ is written as $\mu \mres A$. If $T:\R^d \to \R^N$ is Borel-measurable, then the pushforward measure $T_{\#}\mu \in \Mcal(\R^N)$ is defined as $T_{\#}\mu(A)=\mu(T^{-1}(A))$ for $A \subset \R^N$ Borel. The product measure of $\mu,\nu$ is denoted by $\mu \otimes \nu$. The Lebesgue measure on $\R^d$ is $\Lcal^d$ and the Hausdorff measure of order $m \geq 0$ is written as $\Hcal^m$. The indicator function of a set $A \subset \R^d$ is denoted by
\[
\mathds{1}_{A}(x) = \begin{cases}
    1 &\text{if $x \in A$},\\
    0 &\text{if $x \in \R^d \setminus A$.}
\end{cases}
\]
The ball of radius $r>0$ and center $x \in \R^d$ is denoted by $B_r(x)$, and we write $B_r=B_r(0)$. Finally, the Lipschitz constant of a function $f:\R^d \to \R^N$ is written as $\mathrm{Lip}(f)$.

\section{Nonlocal regularization of optimal transport}\label{sec:Nonlocal_reg_OT}
In this section, we introduce the nonlocal regularization of optimal transport, which is the main problem of interest of this paper. We prove that if the interaction kernel has a sufficiently strong singularity, then the Kantorovich and Monge formulations are equivalent (Proposition~\ref{prop:kantorovichismonge}), which leads to an existence result for optimal transport maps for quite general source and target measures, see~Proposition~\ref{prop:existence}. Precisely, at the end of this section we give some insight into which type of measures can be handled in our framework, see e.g.~Example~\ref{ex:twolines} and Theorem~\ref{thm:twomanifolds}.

Throughout this paper, $\mu,\nu \in \Pcal(\R^d)$ are two probability measures with compact support and $\Pi(\mu,\nu)$ denotes the set of probability measures on $\R^{2d}$ with marginals given by $\mu$ and $\nu$, respectively. For $\epsilon >0$, the Kantorovich formulation of the nonlocal regularization of optimal transport is then obtained by minimizing the functional $\Fcal_\epsilon:\Pi(\mu,\nu) \to [0,+\infty]$ defined by
\begin{equation}\label{eq:regularizedOT}
\Fcal_\epsilon (\pi)=\int_{\R^{2d}} c(x,y) \dd\pi(x,y) + \epsilon \int_{\R^{2d}}\int_{\R^{2d}} \omega(x,x')\abs{y-y'}^q  \dd \pi(x,y) \dd\pi(x',y'),
\end{equation}
where $c:\R^{2d} \to [0,+\infty)$ is a continuous cost function, $q \in (1,+\infty)$ and $\omega:\R^{2d} \to [0,+\infty]$ is a non-negative Borel measurable function; following the usual convention for the product of extended valued functions, we set $\omega(x,x')|y-y'|^q=0$ when $\omega(x, x')=+\infty$ and $y=y'$. If we denote by $\Tcal(\mu,\nu)$ the collection of Borel measurable functions $T:\R^d \to \R^d$ with $T_{\#}\mu =\nu$, then the naturally associated Monge formulation arises by minimizing the functional $\Gcal_\epsilon : \Tcal(\mu,\nu) \to [0,+\infty]$ given by
\begin{equation}\label{eq:regularizedMonge1}
\Gcal_\epsilon(T):=\int_{\R^d} \!c\big(x,T(x)\big) \dd \mu(x)+\epsilon\! \int_{\R^d} \int_{\R^d} \!\omega(x,x')|T(x)-T(x')|^q\,\dd \mu(x) \dd\mu(x').
\end{equation}
For further reference, we also introduce a notation for the regularizer 
\begin{equation}\label{eq:regularizer}
    \Rcal_\mu(T):=\int_{\R^d} \int_{\R^d} \!\omega(x,x')|T(x)-T(x')|^q\,\dd \mu(x) \dd\mu(x'),
\end{equation}
which is well defined for any $\mu$-measurable map $T$, not necessarily a transport map.
We first show that the two formulations are equivalent if $\omega$ has a strong enough singularity at the diagonal relative to $\mu$, that is,
\begin{equation}\label{eq:nonintegrable}
    \int_{\R^d} \omega(x,x')+\omega(x',x)\dd\mu(x')=+\infty \quad \text{for $\mu$-a.e.~$x \in \R^d$.}
\end{equation}
Indeed, this is the content of the following result.

\begin{prop}\label{prop:kantorovichismonge}
    Suppose $\mu$ and $\omega$ satisfy \eqref{eq:nonintegrable}, then for every $\pi \in \Pi(\mu,\nu)$ with
    \begin{equation}\label{eq:finiteenergy}
    \int_{\R^{2d}}\int_{\R^{2d}} \omega(x,x')\abs{y-y'}^q  \dd \pi(x,y) \dd\pi(x',y') <+\infty,
    \end{equation}
    there exists a Borel-measurable $T:\R^d \to \R^d$ such that $\pi = \pi_T:= (\Id,T)_{\#}\mu$. In particular, it holds that
    \[
    \Fcal_\epsilon(\pi) = \begin{cases}
        \Gcal_\epsilon(T) &\text{if $\pi=\pi_T$ for some $T \in \Tcal(\mu,\nu)$,}\\
        +\infty &\text{else}.
    \end{cases}
    \]
\end{prop}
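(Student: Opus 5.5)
We disintegrate $\pi$ with respect to its first marginal, $\pi = \mu \otimes \pi_x$, where $(\pi_x)_x$ is a Borel family of probability measures on $\R^d$ (the disintegration theorem applies since $\mu$ has compact support). The goal is to show that $\pi_x$ is a Dirac mass $\delta_{T(x)}$ for $\mu$-a.e.~$x$. Measurability of $T$ then comes from writing $T(x)=\int y\dd\pi_x(y)$, which is Borel because $x\mapsto\pi_x$ is Borel and $\nu$ has compact support. Once $\pi_x=\delta_{T(x)}$, we get $\pi=(\Id,T)_\#\mu$ and $T_\#\mu=\nu$. The formula for $\Fcal_\epsilon$ follows at once: if $\pi=\pi_T$, then $\Fcal_\epsilon(\pi)=\Gcal_\epsilon(T)$ by the change of variables; otherwise the regularization term is infinite.

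The key quantity is the spread of each fiber,
\[
V(x,x') := \int_{\R^d}\int_{\R^d} |y-y'|^q \dd\pi_x(y)\dd\pi_{x'}(y'),
\]
which lets us write the energy as $\int\int \omega(x,x')V(x,x')\dd\mu(x)\dd\mu(x')<\infty$. The convention $\omega\cdot 0=0$ is consistent with this rewriting. By symmetry of $V$, finiteness also holds for the symmetrized kernel: $\int\int (\omega(x,x')+\omega(x',x))V(x,x')\dd\mu\dd\mu<\infty$. The main inequality I would use is a lower bound of $V(x,x')$ by the internal spread of $\pi_x$. For a fixed $y'$, by the triangle inequality and convexity,
\[
|y-z|^q \le 2^{q-1}\bigl(|y-y'|^q+|z-y'|^q\bigr).
\]
Integrating in $y,z$ against $\pi_x\otimes\pi_x$ and then in $y'$ against $\pi_{x'}$ gives $D(x):=\int\int|y-z|^q\dd\pi_x\dd\pi_x \le 2^q V(x,x')$. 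Hence $V(x,x')\ge 2^{-q}D(x)$ for all $x,x'$.

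Plugging this in yields
\[
\int_{\R^d} 2^{-q}D(x)\Bigl(\int_{\R^d}\omega(x,x')+\omega(x',x)\dd\mu(x')\Bigr)\dd\mu(x) <+\infty.
\]
By \eqref{eq:nonintegrable}, the inner integral is $+\infty$ for $\mu$-a.e.~$x$, so $D(x)=0$ for $\mu$-a.e.~$x$. The convention $0\cdot\infty=0$ is fine here, since we only need that $D(x)>0$ on a set of positive measure would force an infinite integral. Finally, $D(x)=0$ means $y=z$ for $\pi_x\otimes\pi_x$-a.e.~$(y,z)$, which forces $\pi_x$ to be a Dirac mass.

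The main obstacle is bookkeeping rather than ideas. First, the rewriting of the double integral through the disintegration must be justified; Tonelli applies since all integrands are nonnegative, with $+\infty$ values handled by the stated convention on $\{y=y'\}$. Second, measurability of $x\mapsto D(x)$ and of the symmetrized kernel integral needs checking. The convexity lower bound on $V$ is the step that makes the argument work.
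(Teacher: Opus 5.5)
Your argument is correct and is essentially the paper's proof: disintegrate $\pi$ over $\mu$, bound the fiber interaction $V(x,x')$ from below by a dispersion of $\pi_x$ alone, and let \eqref{eq:nonintegrable} force that dispersion to vanish $\mu$-a.e. The differences are minor. The paper uses Jensen with $f(x)=\min_{\bar y}\int|y-\bar y|^q\dd\pi_x(y)$ where you use $D(x)\le 2^qV(x,x')$; your explicit symmetrization via $V(x,x')=V(x',x)$ cleanly justifies the paper's ``potentially interchanging $x$ and $x'$''; and your barycentric definition $T(x)=\int y\dd\pi_x(y)$ (restricted to $\supp\nu$ to handle the exceptional null set) replaces the paper's preimage argument for Borel measurability.
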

\begin{proof}
    Using the disintegration theorem (see e.g.~\cite[Theorem~5.3.1]{AGS05}), we find a Borel family of probability measures $(\pi_x)_{x \in \R^d} \subset \Pcal(\R^d)$ such that
    \begin{align*}
    \int_{\R^{2d}}g(x,y)\dd \pi(x,y) = \int_{\R^d}\int_{\R^d}g(x,y) \dd \pi_x(y) \dd \mu(x)
    \end{align*}
    for all Borel-functions $g:\R^{2d} \to [0,+\infty]$. We want to argue that $\pi_x = \delta_{T(x)}$ for $\mu$-a.e.~$x \in \R^d$ and some map $T:\R^d \to \R^d$. Suppose for the sake of contradiction that there is a set $A \subset \R^d$ with $\mu(A)>0$ and such that $\pi_x$ is not a Dirac measure for all $x \in A$. Then, the function $f(x):=\min_{\bar{y} \in \R^d}\int_{\R^d}\abs{y-\bar{y}}^q\dd \pi_x(y)$ is strictly positive on $A$. Hence, by \eqref{eq:nonintegrable} and by potentially interchanging $x$ and $x'$, we may assume that
    \[
    \int_{A}f(x)\int_{\R^d} \omega(x,x')\dd\mu(x')\dd\mu(x) = +\infty.
    \]
    We can now compute that
    \begin{align*}
        \int_{\R^{2d}}\int_{\R^{2d}} &\omega(x,x')\abs{y-y'}^q  \dd \pi(x,y) \dd\pi(x',y') \\
        &= \int_{\R^{d}}\int_{\R^d}\omega(x,x')\int_{\R^d}\int_{\R^d}\abs{y-y'}^q\dd \pi_x(y)\dd\pi_{x'}(y')\dd\mu(x)\dd \mu(x') \\
        &\geq \int_{\R^{d}}\int_{\R^d}\omega(x,x')\int_{\R^d}\left| y-\int_{\R^d}y'\dd\pi_{x'}(y')\right|^q\dd \pi_x(y)\dd\mu(x)\dd \mu(x')\\
        &\geq \int_{\R^{d}}\int_{\R^d}\omega(x,x')\min_{\bar{y} \in \R^d}\int_{\R^d}\abs{y-\bar{y}}^q\dd \pi_x(y)\dd\mu(x)\dd \mu(x')\\
        &\geq \int_{A}f(x)\int_{\R^d}\omega(x,x')\dd \mu(x')\dd \mu(x) = +\infty,
    \end{align*}
    with the second inequality following from Jensen's inequality. This contradicts \eqref{eq:finiteenergy}, and thus, there exists a Borel-measurable set $F \subset \R^d$ with $\mu(F)=1$ and $\pi_x = \delta_{T(x)}$ for all $x \in F$. Setting $T(x):=0$ for $x \in \R^d \setminus F$, we find a well-defined function $T:\R^d \to \R^d$.
    
    It remains to show that $T$ is Borel-measurable and $(\Id,T)_{\#}\mu=\pi$. For the measurability, we note that for any Borel set $B \subset \R^d$ the mapping $g_B(x):=\pi_x(B)$ is Borel-measurable, which is given by the disintegration theorem. In the case that $0 \not \in B$, we find using $\pi_x = \delta_{T(x)}$ for $x \in F$ that
    \[
    T^{-1}(B)=\{x \in \R^d \,|\, T(x) \in B\} = \{x \in F\,|\, \pi_x(B) =1\} = g_B^{-1}(\{1\}) \cap F.
    \]
    Hence, $T^{-1}(B)$ is Borel-measurable. When $0 \in B$, we find that $T^{-1}(B)=(g_B^{-1}(\{1\}) \cap F) \cup (\R^d \setminus F)$ and the measurability also follows. Finally, to prove the pushforward identity, we take a Borel set $C \subset \R^{2d}$ and define the projection $C_x:=\{y \in \R^d \,|\, (x,y) \in C\}$ for $x \in \R^d$. Then, we obtain from the disintegration identity and $\mu(\R^d \setminus F)=0$ that
    \begin{align*}
        (\Id,T)_{\#}\mu(C) &= \mu\left(\left\{x \in F\,|\, (x,T(x)) \in C\right\}\right) = \int_{F} \delta_{T(x)}(C_x)\dd\mu(x) \\
        &= \int_{F}\int_{C_x}\dd\pi_x(y)\dd \mu(x) = \int_{\R^d}\int_{C_x}\dd\pi_x(y)\dd \mu(x) = \pi(C).\qedhere
    \end{align*}
\end{proof}
\begin{example}\label{ex:regularizers}
a) Let $\mu$ be an absolutely continuous measure on some open set $\Omega \subset \R^d$ with a density that is bounded from below. Then, the kernel
\begin{equation}\label{eq:omegafractional}
\omega(x,x'):=\frac{1}{\abs{x-x'}^{d+\alpha q}},
\end{equation}
satisfies \eqref{eq:nonintegrable} for any $\alpha > 0$. This would correspond to a fractional Sobolev seminorm of order $\alpha$. \smallskip

b) More generally, if $\mu$ is lower $s$-Ahlfors regular on $\supp \mu$, that is, 
\[
\mu(B_r(x)) \geq cr^s \quad \text{for all $r \in (0,1]$ and $x \in \supp \mu$,}
\]
then 
\[
\omega(x,x'):=\frac{1}{\abs{x-x'}^{s+\alpha q}}
\]
would also satisfy \eqref{eq:nonintegrable} for any $\alpha > 0$. Indeed, we then find for any $x \in \supp \mu$ and $r \in (0,1]$ that
\[
\int_{\R^d} \omega(x,x')\dd\mu(x') \geq \int_{B_r(x)}\frac{1}{\abs{x-x'}^{s+\alpha q}}\dd\mu(x') \geq r^{-(s+\alpha q)}\mu(B_r(x)) \geq c r^{-\alpha q},
\]
which yields \eqref{eq:nonintegrable} by letting $r \to 0$. \smallskip

c) Finally, for a general Borel measure $\mu$, we can define an $\omega$ that adapts to $\mu$, given by 
\begin{equation}\label{eq:omegaadaptive}
\omega(x,x'):=\frac{1}{\abs{x-x'}^{\alpha q}\mu(B_{\abs{x-x'}}(x))},
\end{equation}
which also satisfies \eqref{eq:nonintegrable} for any $\alpha>0$. Indeed, we can compute for any $x \in \supp \mu$ and $r>0$ that
\[
\int_{\R^d} \omega(x,x')\dd\mu(x') \geq \int_{B_r(x)}\frac{1}{\abs{x-x'}^{\alpha q}\mu(B_{\abs{x-x'}}(x))}\dd\mu(x') \geq \frac{1}{r^{\alpha q}\mu(B_r(x))}\int_{B_r(x)}\dd\mu = r^{-\alpha q},
\]
which diverges when $r \to 0$.
\end{example}

The equivalence between the Monge and Kantorovich formulation makes it possible to prove the existence of minimizers for both formulations.

\begin{prop}\label{prop:existence}
    Suppose that $\mu$ and $\omega$ satisfy \eqref{eq:nonintegrable} and $\inf_{\Tcal(\mu,\nu)} \Gcal_\epsilon <+\infty$. Then, there exists a $T \in \Tcal(\mu,\nu)$ that minimizes $\Gcal_\epsilon$ and $\pi_T$ minimizes $\Fcal_\epsilon$.
\end{prop}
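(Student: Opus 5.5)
The approach is the direct method on the compact set $\Pi(\mu,\nu)$, followed by Proposition~\ref{prop:kantorovichismonge} to turn a minimizing plan into a map. Since $\mu,\nu$ have compact support, $\Pi(\mu,\nu)$ is tight and weakly closed, and hence weakly compact: every plan is supported in the compact set $\supp\mu\times\supp\nu$. The cost term $\pi\mapsto\int c\dd\pi$ is continuous with respect to this convergence, because $c$ is continuous and bounded on that compact set.

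The main step is weak lower semicontinuity of the quadratic term
\[
\pi\mapsto\int_{\R^{2d}}\int_{\R^{2d}}\omega(x,x')|y-y'|^q\dd\pi(x,y)\dd\pi(x',y').
\]
If $\pi_k\weakstar\pi$, then $\pi_k\otimes\pi_k\weakstar\pi\otimes\pi$ on $\R^{4d}$; one can test with products $\varphi(x,y)\psi(x',y')$ and use density together with tightness. If the integrand $(x,y,x',y')\mapsto\omega(x,x')|y-y'|^q$ is lower semicontinuous and nonnegative, the Portmanteau theorem gives the required liminf inequality. Lower semicontinuity does hold for the model kernels in Example~\ref{ex:regularizers} a) and b), with the convention at $x=x'$, $y=y'$ giving value $0$; it requires a little care exactly there.

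This is the main obstacle: for a merely Borel $\omega$, lower semicontinuity fails in general. I would therefore work under the assumption, implicit in the paper, that $\omega$ is lower semicontinuous. If that is unavailable, I would try to replace $\omega$ by its lower semicontinuous envelope, or to exploit that the $x$-marginals are fixed to $\mu$. The latter route is delicate, since $\pi_k\otimes\pi_k$ has $(x,x')$-marginal $\mu\otimes\mu$, but the $y$-dependence still varies with $k$.

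Assuming lower semicontinuity, the argument runs as follows.
\begin{enumerate}
\item Since $\inf\Gcal_\epsilon<+\infty$ and $\Fcal_\epsilon(\pi_T)=\Gcal_\epsilon(T)$, we have $\inf_{\Pi(\mu,\nu)}\Fcal_\epsilon<+\infty$.
\item By compactness and lower semicontinuity, a minimizer $\bar\pi$ of $\Fcal_\epsilon$ exists.
\item Since $\Fcal_\epsilon(\bar\pi)<+\infty$, the plan $\bar\pi$ satisfies \eqref{eq:finiteenergy}. By Proposition~\ref{prop:kantorovichismonge} there is a Borel $T$ with $\bar\pi=\pi_T$, and $T_\#\mu=\nu$ because the second marginal of $\bar\pi$ is $\nu$, so $T\in\Tcal(\mu,\nu)$.
\item For any $S\in\Tcal(\mu,\nu)$ we have $\Gcal_\epsilon(T)=\Fcal_\epsilon(\pi_T)\le\Fcal_\epsilon(\pi_S)=\Gcal_\epsilon(S)$. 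Hence $T$ minimizes $\Gcal_\epsilon$ and $\pi_T$ minimizes $\Fcal_\epsilon$.
\end{enumerate}
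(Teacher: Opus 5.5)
Your outer structure matches the paper's. You run the direct method on the compact set $\Pi(\mu,\nu)$, use Proposition~\ref{prop:kantorovichismonge} to write the minimizer as $\pi_T$, and then compare $\Gcal_\epsilon(T)=\Fcal_\epsilon(\pi_T)\le\Fcal_\epsilon(\pi_S)=\Gcal_\epsilon(S)$. Steps 1, 3 and 4 are fine.

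The gap is step 2. You obtain lower semicontinuity of the quadratic term only by adding the hypothesis that $\omega$ is lower semicontinuous, so you prove strictly less than the statement. That hypothesis is not implicit in the paper: $\omega$ is only assumed nonnegative and Borel. The remark after the proposition stresses that no semicontinuity is needed, precisely to cover the adaptive kernel \eqref{eq:omegaadaptive}. That kernel is in general not lower semicontinuous, because $x\mapsto\mu(B_r(x))$ is only lower semicontinuous. Your fallback of passing to the lower semicontinuous envelope of $\omega$ does not help, since it changes the functional, and a minimizer of the relaxed problem need not minimize $\Fcal_\epsilon$.

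The missing idea is the route you set aside as delicate. The varying $(y,y')$-dependence is harmless. The only non-continuous dependence is in $(x,x')$, and that is exactly the pair of variables whose joint law $(\pi_{1,3})_\#(\pi_k\otimes\pi_k)=\mu\otimes\mu$ is fixed.

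The integrand $\omega(x,x')|y-y'|^q$ is Borel in $(x,x')$ and lower semicontinuous in $(y,y')$. It is continuous if $\omega(x,x')<+\infty$, and if $\omega(x,x')=+\infty$ it equals $0$ at $y=y'$ and $+\infty$ elsewhere. So it is a normal integrand, and with a fixed marginal this gives the liminf inequality; the paper obtains it from \cite[Theorem~12.2.1]{AGS05}.

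An elementary version runs as follows.
\begin{itemize}
\item Set $\omega_M:=\min\{\omega,M\}$ and $g(y,y'):=|y-y'|^q$, which is bounded by $\diam(\supp\nu)^q$ on the supports.
\item By Lusin's theorem and Tietze's extension there is a continuous $\tilde\omega$ with values in $[0,M]$ and $(\mu\otimes\mu)(\{\omega_M\neq\tilde\omega\})<\delta$.
\item Hence $|\int(\omega_M-\tilde\omega)g\dd\sigma|\le M\diam(\supp\nu)^q\delta$ for both $\sigma=\pi_k\otimes\pi_k$ and $\sigma=\pi\otimes\pi$, uniformly in $k$.
\item Meanwhile $\int\tilde\omega g\dd(\pi_k\otimes\pi_k)\to\int\tilde\omega g\dd(\pi\otimes\pi)$ by weak convergence.
\item Together these give $\int\omega_M g\dd(\pi_k\otimes\pi_k)\to\int\omega_M g\dd(\pi\otimes\pi)$ for each $M$. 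Since $\omega_M\le\omega$, it follows that $\liminf_k\int\omega g\dd(\pi_k\otimes\pi_k)\ge\int\omega_M g\dd(\pi\otimes\pi)$.
\item Let $M\to\infty$ by monotone convergence, which is compatible with the convention $\omega|y-y'|^q=0$ when $\omega=+\infty$ and $y=y'$. This yields lower semicontinuity for merely Borel $\omega$, and your argument then goes through unchanged.
\end{itemize}
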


\begin{proof}
    We first prove that, under the given assumptions, the Kantorovich functional in \eqref{eq:regularizedOT} admits minimizers. This follows from an application of the direct method of the calculus of variations. Indeed, with respect to the weak* convergence the set $\Pi(\mu,\nu)$ is compact, and the cost term is lower semicontinuous due to the continuity of $c$. For the lower semicontinuity of the regularizer, we note that given a sequence $\pi_n \in \Pi(\mu,\nu)$ such that $\pi_n \weakstar \pi$, it holds that $\pi_n \otimes \pi_n \weakstar \pi \otimes \pi$ \cite[Theorem 2.8]{Bil99}. Additionally, the integrand
    \[
    f:\R^{2d} \times \R^{2d} \to [0,+\infty], \quad f((x,x'),(y,y')):=\omega(x,x')\abs{y-y'}^q
    \]
    is a normal integrand in the sense of \cite[Definition~12.1.1]{AGS05}. Since the marginal $\pi_{1,3\#}(\pi_n \otimes \pi_n)=\mu \otimes \mu$ is fixed for every $n \in \N$, the lower semicontinuity of the regularizer now follows from \cite[Theorem~12.2.1]{AGS05}. This shows that $\Fcal_\epsilon$ admits a minimizer.

    Let now $\pi \in \Pi(\mu,\nu)$ be such a minimizer. Since $\Gcal_\epsilon$ and $\Fcal_\epsilon$ are not identically infinite, it follows from Proposition~\ref{prop:kantorovichismonge} that there exists a Borel-measurable $T \in \Tcal(\mu,\nu)$ such that $\pi = \pi_T$. It then readily follows that $T$ is a minimizer of $\Gcal_\epsilon$.
\end{proof}

\begin{rem}
    Note that this existence result bypasses compactness arguments on the
    map $T$ and it is essentially based on the disintegration properties
    of optimal plans for \eqref{eq:regularizedOT}. Moreover, the kernels in Example~\ref{ex:regularizers}
    all satisfy its assumptions, since only the Borel measurability of $\omega$ is
    required. We stress that no semicontinuity of $\omega$ is needed,
    which is relevant for the adaptive kernel \eqref{eq:omegaadaptive};
    the map $x \mapsto \mu(B_r(x))$ is in general only lower
    semicontinuous, so that $\omega$ would fail to be lower
    semicontinuous, while it is Borel measurable, being the composition
    of the continuous map $(x,x') \mapsto (x,\abs{x-x'})$ with the Borel
    map $(x,r) \mapsto \mu(B_r(x))$.
\end{rem}

As a further consequence of Proposition~\ref{prop:kantorovichismonge}, the regularizer provides compactness for sequences of transport maps with equibounded energy.

\begin{cor}\label{cor:compactness}
    Suppose $\mu$ and $\omega$ satisfy \eqref{eq:nonintegrable}, and let $(T_n)_n \subset \Tcal(\mu,\nu)$ be a sequence with $\sup_n \Rcal_\mu(T_n)<+\infty$. Then, up to a subsequence, $T_n \to T$ in $L^2(\R^d,\mu)$ for some map $T \in \Tcal(\mu,\nu)$ with $\Rcal_\mu(T)<+\infty$.
\end{cor}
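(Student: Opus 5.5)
We pass to the plans $\pi_n:=\pi_{T_n}=(\Id,T_n)_{\#}\mu \in \Pi(\mu,\nu)$ and work there, where compactness is available. Since $\Pi(\mu,\nu)$ is weakly* compact (both marginals have compact support), after extracting a subsequence we have $\pi_n \weakstar \pi$ for some $\pi \in \Pi(\mu,\nu)$. The regularizer evaluated on a plan, $\pi \mapsto \int\int \omega(x,x')|y-y'|^q \dd\pi \dd\pi$, is weakly* lower semicontinuous on $\Pi(\mu,\nu)$; this is exactly what was shown in the proof of Proposition~\ref{prop:existence}, using $\pi_n\otimes\pi_n \weakstar \pi\otimes\pi$, the normality of the integrand, and the fixed marginal $\mu\otimes\mu$. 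Hence
\[
\int\!\!\int \omega(x,x')|y-y'|^q \dd\pi(x,y)\dd\pi(x',y') \leq \liminf_n \Rcal_\mu(T_n) <+\infty .
\]
Proposition~\ref{prop:kantorovichismonge} then gives a Borel map $T$ with $\pi=\pi_T$. The marginal condition yields $T_{\#}\mu=\nu$, so $T\in\Tcal(\mu,\nu)$, and the same inequality gives $\Rcal_\mu(T)<+\infty$.

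The main step is to upgrade the weak* convergence $\pi_{T_n}\weakstar\pi_T$ to strong convergence $T_n\to T$ in $L^2(\mu)$. All $T_n$ and $T$ take values in the compact set $\supp\nu$ (up to $\mu$-null sets), so the plans are uniformly tightly supported. I would use the standard argument for convergence of graphs. First take $\varphi \in C_c(\R^d)$ and any bounded Borel $g$ on $\R^d$. By Lusin's theorem, approximate $g$ in $L^1(\mu)$ by a continuous $\tilde g$. Testing against $\tilde g(x)\varphi(y)$ gives
\[
\int g\,\varphi\circ T_n \dd\mu \to \int g\,\varphi\circ T\dd\mu,
\]
so $\varphi\circ T_n \rightharpoonup \varphi\circ T$ weakly in $L^2(\mu)$. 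The continuous test function $|y|^2$ on $\supp\nu$ gives
\[
\int |T_n|^2\dd\mu=\int|y|^2\dd\nu=\int|T|^2\dd\mu.
\]
Next, approximate $T$ in $L^2(\mu)$ by a continuous bounded $S$ (Lusin again), and expand
\[
\int|T_n-T|^2 \dd\mu = \int |y|^2\dd\nu - 2\int T_n\cdot T\dd\mu + \int|T|^2\dd\mu .
\]
The cross term converges to $\int |T|^2\dd\mu$ by testing $\pi_n$ against the continuous bounded function $y\cdot S(x)$ and controlling the error $T-S$ uniformly in $n$ via Cauchy--Schwarz and the uniform bound $|T_n|\leq \sup_{\supp\nu}|y|$. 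Hence $\|T_n-T\|_{L^2(\mu)}\to 0$.

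The only delicate point is this Lusin-type approximation step, since test functions must be continuous in $x$ while $T$ is merely Borel. I expect it to be routine because $\mu$ is a finite Radon measure and all maps are uniformly bounded, so the errors are controlled by $\|T-S\|_{L^2(\mu)}$ independently of $n$. No identification of the subsequential limit beyond $\pi=\pi_T$ is needed.
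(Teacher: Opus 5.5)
Your proof is correct and follows the paper's argument. You pass to the plans, extract a weak* limit in the compact set $\Pi(\mu,\nu)$, use the lower semicontinuity from the proof of Proposition~\ref{prop:existence}, and apply Proposition~\ref{prop:kantorovichismonge} to identify the limit as $\pi_T$ with $\Rcal_\mu(T)<+\infty$. The only difference is the final upgrade to $T_n\to T$ in $L^2(\mu)$: the paper cites \cite[Lemma~3.1]{DePLouSan16}, while you prove it directly by expanding $\int|T_n-T|^2\dd\mu$ and approximating $T$ by a continuous function, which is valid (your preliminary step on the weak convergence of $\varphi\circ T_n$ is not needed for this).
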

\begin{proof}
    The plans $\pi_n:=\pi_{T_n}$ belong to $\Pi(\mu,\nu)$, which is compact for the weak$^*$ topology, so up to a non-relabeled subsequence $\pi_n \weakstar \pi \in \Pi(\mu,\nu)$. By the lower semicontinuity argument in the proof of Proposition~\ref{prop:existence}, the energy \eqref{eq:finiteenergy} of $\pi$ is bounded by $\liminf_n \Rcal_\mu(T_n)<+\infty$, so Proposition~\ref{prop:kantorovichismonge} yields a Borel map $T \in \Tcal(\mu,\nu)$ with $\pi=\pi_T$ and $\Rcal_\mu(T)<+\infty$. Since $\supp \mu$ and $\supp \nu$ are compact, \cite[Lemma~3.1]{DePLouSan16} gives $T_n \to T$ in $L^2(\R^d,\mu)$.
\end{proof}

The main obstacle to applying Proposition~\ref{prop:existence} is to show that $\Gcal_\epsilon$ is not identically infinite, which boils down to the existence of a (not necessarily optimal) transport map between $\mu$ and $\nu$ with sufficient regularity. While regularity of transport maps is not the main aim of this paper, we would like to give some intuition for which kind of maps the nonlocal regularizer is finite. In fact, it will turn out that in some cases very little regularity is needed, which presents a substantial advantage of these nonlocal regularizers over the more classical gradient regularization in \cite{DePLouSan16}.

We will start with the the simplest case from Example~\ref{ex:regularizers}~a), that is, an absolutely continuous measure $\mu$ on an open and bounded set $\Omega \subset \R^d$ with density bounded from above and below and $\omega(x,x')=\abs{x-x'}^{-(d+\alpha q)}$ for some $\alpha \in (0,1)$. In this case, we immediately find that
\[
\Rcal_\mu(T)<+\infty \quad \text{if and only if} \quad T \in W^{\alpha,q}(\Omega;\R^d),
\]
with $W^{\alpha,q}(\Omega;\R^d)$ the well-studied fractional Sobolev-Slobodeckij space, see~e.g.~\cite{Leo23} and the references therein. The smaller $\alpha$ is chosen, the weaker this restriction becomes. For example, if $\alpha q < d$, then functions in $W^{\alpha,q}(\Omega;\R^d)$ may admit discontinuities, while for $\alpha q <1$, they may even exhibit jump discontinuities; this latter type of discontinuities is not possible in any classical Sobolev space.

For the case of a lower $s$-Ahlfors regular measure in Example~\ref{ex:regularizers}~b), the condition $\Rcal_\mu(T)<+\infty$ is equivalent to $T$ lying in the fractional Sobolev space $W^{\alpha,q}_s(\supp \mu, \abs{\cdot},\mu)$ defined on the metric measure space $(\supp \mu, \abs{\cdot},\mu)$, see~\cite{gorka2022embeddings}. While these spaces have been studied in different sources, the literature is quite technical and spread out. Hence, we would like to present a simple sufficient condition such that $\Rcal_\mu(T)<+\infty$. For this to be possible, we naturally require that $\mu$ is also upper $s'$-Ahlfors regular with $s'\in (0,s]$, that is,
\[
\mu(B_r(x)) \leq Cr^{s'} \quad \text{for all $r \in (0,1]$ and $x \in \supp \mu$.}
\]
Then, if there is some $g \in L^q(\R^d,\mu)$ such that
\begin{equation}\label{eq:betaHajlasz}
    \abs{T(x)-T(x')} \leq \abs{x- x'}^{\beta}(g(x)+g(x')) \quad \text{for $\mu$-a.e. $x,x' \in \R^d$,}
\end{equation}
with $\beta > \alpha + (s-s')/q$, it holds that $\Rcal_\mu(T)<+\infty$, see~Lemma~\ref{le:hajlaszcondition} below; for context, we note that \eqref{eq:betaHajlasz} implies by \cite[Theorem 1.3]{Yan03} that $T$ lies in the fractional Haj\l{}asz--Sobolev space of order $\beta$ introduced in \cite{Haj96}. The condition \eqref{eq:betaHajlasz} is much easier to verify as we will see in Example~\ref{ex:twolines} below, and also becomes weaker as $\beta$ becomes smaller.

Finally, there is the regularizer in Example~\ref{ex:regularizers}~c). This energy has also been studied in the literature (e.g.~\cite{GogKosSha10,DiS19}), and provides a suitable theory in the case of doubling measures, which satisfy
\[
\mu(B_{2r}(x)) \leq C\mu(B_r(x)) \quad \text{for all $r>0$ and $x \in \supp \mu$.}
\]
Any measure that is (lower and upper) $s$-Ahlfors regular is also doubling. In this setting, it follows from the results in \cite{GogKosSha10}, see~Lemma~\ref{le:hajlaszcondition} below, that \eqref{eq:betaHajlasz} is also a sufficient condition for $\Rcal_\mu(T)<+\infty$ as long as $\beta > \alpha$. Hence, in this case we can choose $\alpha$ and $\beta$ arbitrarily small, which makes \eqref{eq:betaHajlasz} a very weak requirement.
We will now present some measures that can be tackled by our framework.
\begin{example}\label{ex:doublingmeasures}
    a) Let $A \subset \R^d$ be an $s$-Ahlfors regular set, which means that
    \[
    cr^s \leq \Hcal^{s}(A\cap B_r(x)) \leq C r^s \quad \text{for all $r \in (0,1]$ and $x \in A$,}
    \]
    then $\mu :=\Hcal^{s}\mres A$ is an (upper and lower) $s$-Ahlfors regular measure. In particular, if $s \in \N$ and $A$ is an $s$-dimensional submanifold of $\R^d$, then $\Hcal^{s} \mres A$ is $s$-Ahlfors regular. Any of these measures are also doubling. \smallskip

    b) Let $\mu_i$ for $i=1,\ldots,N$ be $s_i$-Ahlfors regular measures, then it holds that $\mu:=\sum_{i=1}^{N}\mu_i$ is lower $s^{+}$-Ahlfors regular and upper $s^{-}$-Ahlfors regular with $s^+:=\max_is_i$ and $s^{-}:=\min_i s_i$. This measure need not be doubling though if the supports are not disjoint. \smallskip

    c) If $\mu_i$ for $i=1,\ldots,N$ are doubling measures and $\supp \mu_i \cap \supp \mu_j = \varnothing$ for all $i \not =j$, then $\mu:=\sum_{i=1}^{N}\mu_i$ is also doubling. We observe that a doubling measure need not be Ahlfors regular, since each $\mu_i$ could have a different dimension.
\end{example}
We now turn to the proof that \eqref{eq:betaHajlasz} presents a sufficient condition for the finiteness of the regularizer. We only present the proof of part (i), since part (ii) follows from \cite[Lemma~6.2]{GogKosSha10}.
\begin{lemma}\label{le:hajlaszcondition}
    Let $T:\R^d \to \R^d$ satisfy \eqref{eq:betaHajlasz} for some $\beta >0$. Then, the following holds:
    \begin{itemize}
        \item[(i)] If $\mu$ is lower $s$-Ahlfors and upper $s'$-Ahlfors regular and $\beta > \alpha + (s-s')/q$, then
        \[
        \int_{\R^d}\int_{\R^d} \frac{\abs{T(x)-T(x')}^q}{\abs{x-x'}^{s+\alpha q}}\dd \mu(x)\dd \mu(x') <+\infty.
        \]
        \item[(ii)] If $\mu$ is doubling and $\beta > \alpha$, then
        \[
        \int_{\R^d}\int_{\R^d} \frac{\abs{T(x)-T(x')}^q}{\abs{x-x'}^{\alpha q}\mu(B_{\abs{x-x'}}(x))}\dd \mu(x)\dd \mu(x') <+\infty.
        \]
    \end{itemize}
\end{lemma}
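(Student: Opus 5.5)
Only part (i) needs an argument, since part (ii) is \cite[Lemma~6.2]{GogKosSha10}. The plan is to plug \eqref{eq:betaHajlasz} directly into the integrand and reduce to a one-variable integral that the upper Ahlfors bound controls. Using $(a+b)^q\leq 2^{q-1}(a^q+b^q)$ and symmetry of the kernel in $x,x'$, the double integral is bounded by
\[
2^{q}\int_{\R^d} g(x)^q \int_{\R^d} \abs{x-x'}^{\beta q - s-\alpha q}\dd\mu(x')\dd\mu(x).
\]
It then suffices to show that $\sup_{x\in\supp\mu}\int_{\R^d} \abs{x-x'}^{\beta q - s-\alpha q}\dd\mu(x')<+\infty$, since $g\in L^q(\mu)$.

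To bound the inner integral, I split $\supp\mu$ into the region $\abs{x-x'}>1$ and the region $\abs{x-x'}\leq 1$. On the far region the integrand is at most $\max\{1,(\diam\supp\mu)^{(\beta-\alpha)q-s}\}$, using compact support. This is uniformly bounded because $\mu$ is a probability measure.

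On the near region I decompose dyadically into annuli $A_k=\{2^{-k-1}<\abs{x-x'}\leq 2^{-k}\}$ for $k\geq 0$. On $A_k$ the integrand is at most $2^{(k+1)(s+\alpha q-\beta q)}$ when the exponent $s+\alpha q-\beta q$ is positive; otherwise the integrand is bounded and there is nothing to prove. The upper $s'$-Ahlfors regularity gives $\mu(A_k)\leq \mu(B_{2^{-k}}(x))\leq C2^{-ks'}$. Hence the contribution is bounded by
\[
C\sum_{k\geq0}2^{k(s+\alpha q-\beta q-s')},
\]
up to constants. This geometric series converges exactly when $\beta q>\alpha q+s-s'$, which is the hypothesis $\beta>\alpha+(s-s')/q$.

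Here the lower Ahlfors bound is not needed; it only enters the equivalence with \eqref{eq:nonintegrable}. Also, \eqref{eq:betaHajlasz} holds only $\mu\otimes\mu$-a.e., which suffices since we integrate. The only delicate step is this dyadic summation, which is where the exponent condition is used. I would also check that $x\in\supp\mu$ is enough to apply the upper bound, which holds $\mu$-a.e.
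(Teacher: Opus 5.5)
Your proposal is correct and follows the paper's proof. You insert \eqref{eq:betaHajlasz}, symmetrize to reduce to $2^q\int g(x)^q\int \abs{x-x'}^{\beta q-s-\alpha q}\dd\mu(x')\dd\mu(x)$, and bound the inner potential uniformly on $\supp\mu$ using only the upper $s'$-Ahlfors bound. The one difference is that you prove this potential estimate directly by the near/far split and dyadic annuli, whereas the paper cites \cite{Gat09}. Your version also respects that Ahlfors regularity is assumed only for $r\in(0,1]$, and it implicitly uses that $\mu$ has no atoms, which follows from $s'>0$.
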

\begin{proof}
\textit{Part (i):} Using \eqref{eq:betaHajlasz} we find with $\eta:=s'-s+(\beta-\alpha)q >0$ that
\begin{align*}
    \int_{\R^d}\int_{\R^d} \frac{\abs{T(x)-T(x')}^q}{\abs{x-x'}^{s+\alpha q}}\dd \mu(x')\dd \mu(x) &\leq \int_{\R^d}\int_{\R^d} \frac{1}{\abs{x-x'}^{s+\alpha q-\beta q}}(g(x)+g(x'))^q\dd \mu(x')\dd \mu(x) \\
    &\leq 2^q \int_{\R^d} g(x)^q \int_{\R^d}  \frac{1}{\abs{x-x'}^{s+\alpha q-\beta q}}\dd \mu(x')\dd \mu(x) \\
    &\leq C \int_{\R^d} g(x)^q \diam(\supp \mu)^{\eta}\dd\mu(x) <+\infty,
\end{align*}
where we have used Fubini and the triangle inequality in the second line and the upper $s'$-Ahlfors regularity in the last line, see~e.g.~\cite[Lemma]{Gat09}. 
\end{proof}
We first apply this result to show a simple example where optimal transport maps exist only in the presence of nonlocal regularization.
\begin{example}\label{ex:twolines}
    Consider the optimal transport problem with the squared Euclidean cost
    \begin{equation}\label{eq:twolines}
    \inf_{T \in \Tcal(\mu,\nu)}\int_{\R^2} \abs{x-T(x)}^2 \dd \mu(x)+\epsilon\! \int_{\R^2} \int_{\R^2} \frac{\abs{T(x)-T(x')}^q}{\abs{x-x'}^{1+\alpha q}}\,\dd \mu(x) \dd\mu(x'),
    \end{equation}
    with $\mu = \Hcal^1 \mres A$ and $\nu = \frac{1}{2}\left(\Hcal^1 \mres B+\Hcal^1 \mres C\right)$, where $A,B,C$ are three vertical line segments; precisely, $A=\{0\} \times [0,1]$, $B=\{-1\}\times[0,1]$ and $C=\{1\} \times [0,1]$. When $\epsilon =0$, it is well-known that this optimal transport problem has no solution, and one has to relax the problem to the Kantorovich formulation
    \[
    \inf_{\pi \in \Pi(\mu,\nu)} \int_{\R^4} \abs{x-y}^2 \dd \pi(x,y),
    \]
    which has a unique solution $\pi=\frac{1}{2}(\pi_{T^+}+\pi_{T^-})$ with $T^{\pm}(x):=x\pm (1,0)$, see~\cite[Section~1.4]{San15}.

    We will show that \eqref{eq:twolines} does have a solution when $\epsilon >0$ and $\alpha q <1$. Indeed, since $\mu$ is clearly 1-Ahlfors regular, it fits into Example~\ref{ex:regularizers}~b). Hence, Proposition~\ref{prop:existence} is applicable if we can show \eqref{eq:twolines} is proper. To this aim, we define
    \[
    T(x):= \begin{cases}
        (x_1+1,2x_2) &\text{if $x_2 < 1/2$,}\\
        (x_1-1,2x_2-1) &\text{if $x_2 \geq 1/2$,}
    \end{cases}
    \]
    which clearly satisfies $T_{\#}\mu = \nu$. We find that for any $\beta>0$
    \[
    \abs{T(x)-T(x')}\leq C\abs{x-x'}^{\beta}\left(\abs{x_2-1/2}^{-\beta}+\abs{x_2'-1/2}^{-\beta}\right) \quad \text{for all $x,x' \in A$}.
    \]
    Thus, if we choose $\alpha < \beta <1/q$ we find
    \[
    \int_{\R^2} \abs{x_2-1/2}^{-\beta q}\dd \mu(x) = \int_0^1 \abs{t-1/2}^{-\beta q}\dd t <+\infty,
    \]
    so that by Lemma~\ref{le:hajlaszcondition}~(i) we have
    \[
    \int_{\R^2} \int_{\R^2} \frac{\abs{T(x)-T(x')}^q}{\abs{x-x'}^{1+\alpha q}}\,\dd \mu(x) \dd\mu(x') <+\infty
    \]
    as desired. Hence, \eqref{eq:twolines} is proper, and therefore, admits a solution. Note also that if we would have used the classical Sobolev regularization, then no transport maps with sufficient regularity would exist, since they all need to have a jump discontinuity.
\end{example}

The existence result of the previous example can be generalized to Lipschitz manifolds in higher dimensions. By a Lipschitz manifold with boundary $M \subset \R^d$ of dimension $m \leq d-1$, we mean a set such that for every interior point $p \in M$ there exists an open neighborhood $O \subset \R^d$ of $p$ and a set $U \subset \R^m$ with a bi-Lipschitz mapping $\psi:U \to \psi(U)$ with $O \cap M = \psi(U)$; if $p \in M$ is a boundary point of $M$, we instead assume $U$ to be open in $\R^{m-1} \times [0,+\infty)$.

\begin{thm}\label{thm:twomanifolds}
Let $\mu = \Hcal^{m} \mres M/\Hcal^m(M)$ and $\nu = \Hcal^{n} \mres N/\Hcal^n(N)$, where $n \leq m \leq d-1$ and $M,N \subset \R^d$ are compact Lipschitz manifolds with boundary of dimension $m,n$, respectively. Assume that $q \in (1,+\infty)$ and $0<\alpha < 1/q$. Then 
\begin{equation}\label{eq:twomanifolds}
    \inf_{T \in \Tcal(\mu,\nu)}\int_{\R^d} c(x,T(x)) \dd \mu(x)+\epsilon\! \int_{\R^d} \int_{\R^d} \frac{\abs{T(x)-T(x')}^q}{\abs{x-x'}^{m+\alpha q}}\,\dd \mu(x) \dd\mu(x'),
\end{equation}
admits a solution.
\end{thm}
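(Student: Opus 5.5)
By Proposition~\ref{prop:existence}, it suffices to check two things: that $\mu$ and $\omega(x,x')=\abs{x-x'}^{-(m+\alpha q)}$ satisfy \eqref{eq:nonintegrable}, and that some $T\in\Tcal(\mu,\nu)$ has $\Rcal_\mu(T)<+\infty$. The cost term is bounded for any transport map, since $c$ is continuous and the supports are compact.

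For the first point, a compact Lipschitz manifold $M$ of dimension $m$ is $m$-Ahlfors regular: the bi-Lipschitz charts and a finite cover give $cr^m\leq \Hcal^m(M\cap B_r(x))\leq Cr^m$ for $r\in(0,1]$. For boundary charts we use that half-balls in $\R^{m-1}\times[0,\infty)$ still have measure comparable to $r^m$. Hence \eqref{eq:nonintegrable} follows from Example~\ref{ex:regularizers}~b). Moreover $\mu$ is both lower and upper $m$-Ahlfors regular, so Lemma~\ref{le:hajlaszcondition}~(i) applies with $s=s'=m$ as soon as $T$ satisfies \eqref{eq:betaHajlasz} for some $\beta\in(\alpha,1/q)$ and $g\in L^q(\mu)$.

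The core of the proof is to construct such a $T$, and I plan to generalize Example~\ref{ex:twolines}.

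\emph{Step 1: decompose $M$.} Partition $M$, up to $\mu$-null sets, into finitely many pieces $M_1,\dots,M_k$. Each $M_i=\psi_i(Q_i)$ is the bi-Lipschitz image of a cube $Q_i\subset\R^m$, and the pieces come from a finite atlas refined into disjoint cubes in parameter space. Similarly, $N=\bigcup_j N_j$ with $N_j=\phi_j(P_j)$ and $P_j$ cubes in $\R^n$.

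\emph{Step 2: split the cubes.} Further subdivide the $Q_i$ by hyperplanes into finitely many boxes $R_1,\dots,R_L$. The aim is that the $\mu$-masses of the boxes match the $\nu$-masses $\nu(N_j)$, possibly after also splitting the $N_j$ into boxes $S_1,\dots,S_L$. Masses can be matched exactly because cutting a box along a moving hyperplane changes its mass continuously.

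\emph{Step 3: map each box.} On each box, pull back $\mu$ to a density $\rho$ on $R_l$ and $\nu$ to a density $\sigma$ on $S_l$; both are bounded above and below by the area formula for bi-Lipschitz maps. The difficulty is to find a map $R_l\to S_l$ pushing $\rho\,\mathcal L^m$ to $\sigma\,\mathcal L^n$ with controlled regularity. Since $n\leq m$, compose a Lipschitz projection $R_l\to[0,1]^n$ with a Knothe--Rosenblatt-type rearrangement. In one dimension at a time, the monotone rearrangement between densities bounded above and below is bi-Lipschitz. Conditional densities stay bounded above and below, so the triangular map is Lipschitz in each variable. I expect Lipschitz control in $x$ overall, at least when I first map $\rho$ to uniform measure on a box and the uniform measure to $\sigma$. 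If Lipschitz regularity of the conditional rearrangements fails, I would replace it by the coarse bound $\abs{T(x)-T(x')}\leq C$ on near-boundary pairs, which Step~4 absorbs anyway.

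\emph{Step 4: verify \eqref{eq:betaHajlasz}.} The resulting $T$ is Lipschitz on each piece $\psi(R_l)$ and bounded globally. For $x,x'$ in the same piece, $\abs{T(x)-T(x')}\leq C\abs{x-x'}$, which is at most $C\abs{x-x'}^\beta$ since $\abs{x-x'}$ is bounded. For $x,x'$ in different pieces, set $g(x)=C\,d(x)^{-\beta}$, where $d(x)$ is the distance from $x$ to the union $\Sigma$ of the piece boundaries. Then $\abs{T(x)-T(x')}\leq \operatorname{diam}(N)\leq C\abs{x-x'}^\beta d(x)^{-\beta}$, because a segment from $x$ to $x'$ must cross near $\Sigma$, so $\abs{x-x'}\gtrsim d(x)$. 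The step needing care is this last inequality: pieces are separated within $M$, not in $\R^d$. I would handle it by working in charts, since bi-Lipschitz maps preserve distances up to constants. Finally, $g\in L^q(\mu)$ because $\Sigma$ is a finite union of bi-Lipschitz images of $(m-1)$-dimensional faces, so $\mu(\{d<r\})\leq Cr$, and $\int d^{-\beta q}\dd\mu<\infty$ precisely because $\beta q<1$. This is where the hypothesis $\alpha<1/q$ is used. Lemma~\ref{le:hajlaszcondition}~(i) then gives $\Rcal_\mu(T)<\infty$, and Proposition~\ref{prop:existence} concludes.
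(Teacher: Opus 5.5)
Your overall architecture matches the paper's: reduce via Proposition~\ref{prop:existence} to finding one $T$ with $\Rcal_\mu(T)<+\infty$, use Ahlfors regularity, build mass-matched partitions, and pay for the interfaces with $\alpha q<1$. However, Step~3 has a genuine gap.

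The area formula only gives you that the pulled-back densities $\rho,\sigma$ are bounded above and below; in general they are not continuous. The Knothe--Rosenblatt map is Lipschitz in the last variable for fixed earlier ones. But it depends on the earlier variables through conditional distribution functions such as $F_{z_1}(z_2)=\int_0^{z_2}\rho(z_1,t)\,\mathrm{d}t\,\big/\int_0^1\rho(z_1,t)\,\mathrm{d}t$, and these need not even be continuous in $z_1$.

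For instance, take the graph chart $z\mapsto(z,f(z))$ over $[0,1]^2$ with $f(z)=|A\cap[0,z_1]|\,(z_2-\tfrac12)_+$, where $A\subset[0,1]$ is an arbitrary measurable set. Then $\rho=\sqrt{1+|\nabla f|^2}$ equals $1$ for $z_2<\tfrac12$. On the other hand, $\int_{1/2}^1\rho(z_1,t)\,\mathrm{d}t$ is strictly larger for $z_1\in A$ than for $z_1\notin A$, because the factor $|A\cap[0,z_1]|$ is continuous. Hence $z_1\mapsto F_{z_1}(\tfrac12)$ jumps across $\partial A$. With a flat target, the second component of your map therefore jumps along $\{z_1\in\partial A\}$ \emph{inside} the box. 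Choosing $A$ as a union of well-separated intervals of lengths $\ell_k$ with $\sum_k\ell_k<\infty=\sum_k\ell_k^{1-\alpha q}$ then gives $\Rcal_\mu(T)=+\infty$.

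Going through the uniform measure does not help, since that leg is exactly this map. Your fallback does not apply either: these discontinuities lie on a set unrelated to the fixed $(m-1)$-dimensional interface $\Sigma$, so $g=C\,d^{-\beta}$ does not see them. Less seriously, Step~1's tiling of $M$ by finitely many bi-Lipschitz cubes is also unjustified. Overlaps of charts are bounded by images of chart boundaries, not by grid faces, and you need cubes only because Knothe--Rosenblatt requires product domains.

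What is missing is a piecewise map whose regularity does not depend on how rough the densities and the pieces are. The paper uses the Brenier map $T_j=\nabla\phi_j$ between the pulled-back measures, composed with a coordinate projection when $m>n$. Since $\phi_j$ is convex and can be taken Lipschitz, $DT_j=D^2\phi_j$ is a locally finite measure, so $T_j$ is a bounded $\BV$ map whatever the densities are.

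A $\BV$ map need not satisfy the pointwise condition \eqref{eq:betaHajlasz} in any evident way, so the paper does not go through Lemma~\ref{le:hajlaszcondition}. Instead it estimates the Gagliardo seminorm directly. The bounds $\|T_j(\cdot+h)-T_j\|_{L^1}\le C|h|\,|DT_j|(\R^m)$ and $\|T_j\|_{L^\infty}<+\infty$ control the same-piece contribution by a multiple of $\int_{B_R}|h|^{1-m-\alpha q}\,\mathrm{d}h$, which is finite exactly because $\alpha q<1$.

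Your treatment of pairs in different pieces survives this change. It uses $|x-x'|\gtrsim d(x)$, upper Ahlfors regularity and $\mu(\{d<r\})\le Cr$, and yields a bound by $\int d^{-\alpha q}\,\mathrm{d}\mu<+\infty$. This is a clean alternative to the paper's finite-perimeter bound on $\Rcal_\mu(\1_{M_j})$.
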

\begin{proof}
Note that the measure $\mu = \Hcal^m \mres M/\Hcal^m(M)$ is $m$-Ahlfors regular, see Example \ref{ex:doublingmeasures}\,(a), and the kernel $\omega(x,x') = \abs{x-x'}^{-(m+\alpha q)}$ satisfies condition \eqref{eq:nonintegrable} with $s = m$ by Example \ref{ex:regularizers}\,(b). Hence, by Proposition \ref{prop:existence}, it suffices to verify that there exists a $T \in \Tcal(\mu,\nu)$ for which 
\[
\Rcal_\mu(T) = \int_{\R^d} \int_{\R^d} \frac{\abs{T(x) - T(x')}^q}{\abs{x - x'}^{m + \alpha q}} \dd\mu(x)\dd\mu(x') <+\infty;
\]
observe that the cost term is trivially finite by the continuity of $c$ and boundedness of $M$ and $N$. We construct such a map $T$ in several steps by adequately partitioning both manifolds in pieces of matching mass, each contained in a single coordinate chart, using optimal transport maps between the pieces, and checking that the global discontinuous map still has finite energy owing to the condition $\alpha < 1/q$, which allows jump discontinuities. Throughout, we simplify the presentation by assuming $m = n$; the modifications for $m > n$ are described in Step~2 below. 
\medskip

\textit{Step 1: Partitions with matched volumes and finite perimeter.}
For each $p \in M$ we consider $U_p \subset \R^m$ to be an open subset of $\R^m$ or $\R^{m-1} \times [0,+\infty)$, respectively, and a bi-Lipschitz mapping $\psi_p:U_p \to \psi_p(U_p)$ with $M \cap O_p = \psi_p(U_p)$ for some open neighborhood $O_p$ of $p$. By possibly choosing $U_p$ smaller, we may assume without loss of generality that $U_p$ is either an open or half-open ball, in particular, that $U_p$ has finite perimeter. By compactness of $M$, we may extract finitely many points $p_1,\ldots,p_I$ such that the sets $V_i:=\psi_{p_i}(U_{p_i})$ cover $M$. We then define the partition of $M$
\[
Q_1:= V_1 \quad \text{and} \quad Q_i := V_i \setminus\bigcup_{j < i} V_j \quad \text{for $i=2,\ldots,I$.}
\]
For the boundary of these sets with respect to the subspace topology of the manifold, we find
\[
\partial_M Q_i \subset \bigcup_{j=1}^{i} \partial_M V_j \subset \bigcup_{j=1}^{i} \psi_{p_j}(\partial U_{p_j}).
\]
Since $\Hcal^{m-1}(\partial U_{p_j})<+\infty$ and Lipschitz maps increase the $(m-1)$-dimensional Hausdorff measure by at most a factor $\Lip(\psi_{p_j})^{m-1}$, it follows that $\Hcal^{m-1}(\partial_M Q_i)<+\infty$ for every $i=1,\ldots,I$. In a similar way, we can construct a partition $R_1,\ldots,R_J$ of $N$ with $\Hcal^{n-1}(\partial_N R_i)<+\infty$.

It remains to match the masses of both partitions, which we achieve by further subdividing the partition. Let $A_i:=\sum_{j \leq i}\mu(Q_j)$ and $B_k:=\sum_{j\leq k}\nu(R_j)$ and let $0=u_0<u_1<\dots<u_{K}=1$ be the increasing enumeration of $\{A_i\}_i \cup \{B_k\}_k$. Each interval $(u_{l-1},u_l]$ is contained in exactly one $(A_{i-1},A_i]$ and in exactly one $(B_{k-1},B_k]$, so it suffices to subdivide each $Q_i$ into consecutive pieces of masses $u_l-u_{l-1}$, for those $l$ with $(u_{l-1},u_l] \subset (A_{i-1},A_i]$, and similarly for the $R_k$. To achieve this, we consider the set $Q_i':=\psi_{p_i}^{-1}(Q_i)$ and the measure $\mu_i':=(\psi_{p_i}^{-1})_{\#}(\mu \mres Q_i)$, which is absolutely continuous with respect to the Lebesgue measure $\Lcal^m$. Then, using the continuity of $t \mapsto \mu_i'(\{z_1 < t\} \cap Q_i')$, we can use the intermediate value theorem to split up $Q_i'$ into parts which have the desired mass with respect to the measure $\mu_i'$. Note that the sets still have finite perimeter, because the boundary is only increased by a bounded part of a hyperplane. Then, by mapping these sets back to $M$ via $\psi_{p_i}$ and doing the same for $N$, we obtain  $M_1,\dots,M_{K}$ and $N_1,\dots,N_{K}$, such that each set lies in a single coordinate chart, and for all $j=1,\ldots,K$
\[
\Hcal^{m-1}(\partial_M M_j) <+\infty, \quad \Hcal^{n-1}(\partial_N N_j)<+\infty \quad \text{and} \quad \mu(M_j)=\nu(N_j).
\]

\medskip

\textit{Step 2: Local optimal maps in coordinate charts.}
Fix $j \in \{1, \ldots, K\}$. We know that $M_j \subset \psi_{p_i}(U_{p_i})$ for some $i=i(j)$, and write for simplicity $\psi_j:=\psi_{p_{i(j)}}$. We similarly write $\varphi_j$ for the coordinate maps of $N$. We can then push forward the restricted 
measures
\[
\mu_j := (\psi_{j}^{-1})_\# (\mu \mres M_j) 
= \rho_{j}\, \Lcal^m \mres \Omega_j, \qquad 
\nu_j := (\varphi_{j}^{-1})_\# (\nu \mres N_j) 
= \sigma_{j}\, \Lcal^n \mres \Omega_j',
\]
where $\Omega_j := \mathrm{Int}(\psi_{j}^{-1}(M_j)) \subset \R^m$ 
and $\Omega_j' := \mathrm{Int}(\varphi_{j}^{-1}(N_j)) \subset \R^n$. Note that the densities $\rho_j,\sigma_j$ are essentially bounded by the Lipschitz continuity of the coordinate maps. In addition, Step~1 yields
\[
\mu_j(\R^m)  = \nu_j(\R^n) \quad \text{for all $j=1,\ldots,K$.}
\]
We distinguish between the two cases $m=n$ and $m>n$ to construct a measure-preserving map from $\mu_j$ to $\nu_j$ that has $BV$-regularity.

\emph{Case $m = n$.} Both $\mu_j$ and $\nu_j$ are compactly supported 
absolutely continuous measures on $\R^m$ with the same total mass. By 
Brenier's theorem (see e.g.~\cite[Theorem~5.2]{ABS24}), there exists a convex function 
$\phi_j: \R^m \to (-\infty, +\infty]$ such that the map 
$T_j := \nabla \phi_j$ satisfies $(T_j)_\# \mu_j = \nu_j$. Additionally, $\phi_j$ may be chosen to be Lipschitz continuous on some arbitrarily large convex open set by \cite[Theorem~2.1~(v)]{DMA16}, which yields that $T_j \in BV_{\rm loc}(\R^m;\R^n)$ by Alexandrov's theorem (see~\cite[Theorem~6.4]{ABS24}); in fact, by altering $T_j$ outside the support of $\mu_j$, we may assume $T_j \in BV(\R^m;\R^n)$.

\emph{Case $m > n$.} We embed $\nu_j$ into $\R^m$ via the inclusion 
$\iota \colon \R^n \hookrightarrow \R^m$, $z \mapsto (z, 0)$, and set 
$\tilde{\nu}_j := \iota_\# \nu_j$, a compactly supported measure on $\R^m$ 
concentrated on $\Omega_j' \times \{0\}^{m-n}$. Since only the source 
measure is required to be absolutely continuous in Brenier's theorem, and $\mu_j$ is absolutely continuous with 
respect to $\Lcal^m$, there exists a convex function 
$\phi_j \colon \R^m \to (-\infty, +\infty]$ with 
$(\nabla \phi_j)_\# \mu_j = \tilde{\nu}_j$.
Since $\tilde{\nu}_j$ is 
supported on $\R^n \times \{0\}^{m-n}$, the last $m - n$ components of 
$\nabla \phi_j$ vanish $\mu_j$-a.e. If we define 
$T_j := \pi_n \circ \nabla \phi_j$, where 
$\pi_n \colon \R^m \to \R^n$ is the projection onto the first $n$ 
coordinates, we get $(T_j)_\# \mu_j = (\pi_n)_\# \tilde{\nu}_j = \nu_j$. Since $\pi_n$ is $1$-Lipschitz and the composition of a 
Lipschitz map with a $\BV$ function is of bounded variation \cite[Theorem~3.96]{AmbFusPal00}, we may assume again that $T_j \in BV(\R^m;\R^n)$.

\textit{Step 3: Construction of the global map.}
Define $T:\R^d \to \R^d$ by setting $T:=0$ on $\R^d \setminus M$ and 
\begin{equation}\label{eq:T-sum}
T = \sum_{j=1}^{K} \1_{M_j} \cdot 
(\varphi_{j} \circ T_j \circ \psi_{j}^{-1}) 
=: \sum_{j=1}^{K} \widetilde{T}_j \qquad \text{on } M,
\end{equation}
where $\widetilde{T}_j := \1_{M_j} \cdot 
(\varphi_{j} \circ T_j \circ \psi_{j}^{-1})$. Now, for each $j$, by the definitions of $\mu_j$, $\nu_j$, and $T_j$ from 
Step~2, we have
\begin{align*}
T_\# (\mu \mres M_j) 
&= (\varphi_{j} \circ T_j \circ \psi_{j}^{-1})_\# 
   (\mu \mres M_j) \\
&= (\varphi_{j})_\# \circ (T_j)_\# \circ 
   (\psi_{j}^{-1})_\# (\mu \mres M_j) \\
&= (\varphi_{j})_\# \circ (T_j)_\# (\mu_j) \\
&= (\varphi_{j})_\# (\nu_j) \\
&= \nu \mres N_j.
\end{align*}
Since the $\{M_j\}$ partition $M$ and the $\{N_j\}$ partition $N$, summing 
over $j$ gives 
\[
T_\# \mu = T_\# \left(\sum_{j=1}^K \mu \mres M_j\right) 
= \sum_{j=1}^K \nu \mres N_j = \nu.
\]
To show that $\Rcal_\mu(T)<+\infty$, we note that $\Rcal_\mu(\cdot)^{1/q}$ is a seminorm, so that
\begin{equation}\label{eq:triangle}
\Rcal_\mu(T)^{1/q} \leq \sum_{j=1}^K \Rcal_\mu(\widetilde{T}_j)^{1/q}.
\end{equation}
It therefore suffices to show $\Rcal_\mu(\widetilde{T}_j) <+\infty$ for 
each $j$. We decompose 
\begin{align}
\begin{split}\label{eq:decomp}
\Rcal_\mu(\widetilde{T}_j) 
=& \int_{M_j}\int_{M_j} 
\frac{\abs{\widetilde{T}_j(x)-\widetilde{T}_j(x')}^q}
{\abs{x-x'}^{m+\alpha q}} \dd\mu(x)\dd\mu(x')
\;+\; 2\int_{M_j}\int_{M \setminus M_j} 
\frac{\abs{\widetilde{T}_j(x)}^q}
{\abs{x-x'}^{m+\alpha q}} \dd\mu(x')\dd\mu(x) \\
:=&I_j + 2J_j,
\end{split}
\end{align}
where we used that $\widetilde{T}_j(x') = 0$ for $x' \notin M_j$. We 
bound $I_j$ and $J_j$ separately. 

\emph{Step 4: Interior contributions.}
Thanks to the change of variables $x = \psi_{j}(z)$, 
$x' = \psi_{j}(z')$ with $z, z' \in \Omega_j$, we obtain
\begin{align}\label{eq:Ij-bound}
I_j &= \int_{\Omega_j}\int_{\Omega_j} 
\frac{\abs{\varphi_{j}(T_j(z)) - \varphi_{j}(T_j(z'))}^q}
{\abs{\psi_{j}(z) - \psi_{j}(z')}^{m+\alpha q}} 
\,\rho_j(z)\,\rho_j(z') \dd z\dd z' \nonumber\\[4pt]
&\leq \|\rho_j\|_\infty^2\, \Lip(\varphi_{j})^q\, 
\Lip(\psi_{j}^{-1})^{m+\alpha q}\, 
\int_{\Omega_j}\int_{\Omega_j} 
\frac{\abs{T_j(z) - T_j(z')}^q}{\abs{z - z'}^{m+\alpha q}}
\dd z\dd z' 
\end{align}
where we used that $\abs{\psi_{j}(z) - \psi_{j}(z')} \geq 
\Lip(\psi_{j}^{-1})^{-1}\abs{z - z'}$, which follows from the 
Lipschitz bound on $\psi_{j}^{-1}$. Since $T_j \in BV(\R^m;\R^n)$, it follows that (see \cite[Remark~3.25]{AmbFusPal00})
\[
\|T_j(\cdot)-T_j(\cdot+h)\|_{L^1(\Omega_j)} \leq C|h| |DT_j|(\R^m) \quad \text{for all $h \in \R^m$.}
\]
Combining this with the boundedness of $T_j$ on $\Omega_j$, we infer with $R>\mathrm{diam}(\Omega_j)$
\begin{align*}
  \int_{\Omega_j}\int_{\Omega_j} 
\frac{\abs{T_j(z) - T_j(z')}^q}{\abs{z - z'}^{m+\alpha q}}
\dd z\dd z' &\leq \bigl(2\|T_j\|_{L^{\infty}(\Omega_j)}\bigr)^{q-1} \int_{B(0,R)}\int_{\Omega_j} \frac{\abs{T_j(z) - T_j(z+h)}}{\abs{h}^{m+\alpha q}}
\dd z\dd h\\
&\leq C\|T_j\|_{L^{\infty}(\Omega_j)}^{q-1}|DT_j|(\R^m) \int_{B(0,R)} \frac{1}{|h|^{m+\alpha q-1}}\dd h <+\infty,
\end{align*}
using that $m+\alpha q-1 < m$. In light of \eqref{eq:Ij-bound}, we deduce that $I_j <+\infty$.

\emph{Step 5: Jump contributions.} Since $T$ maps $M_j$ into the compact set $N_j \subset N$, we have 
$\|\widetilde{T}_j\|_{L^\infty(M)} \leq D := \diam_{\R^d}(N) <+\infty$. Therefore
\begin{equation}\label{eq:Jj-bound}
J_j \leq D^q \int_{M_j}\int_{M \setminus M_j} 
\frac{\dd\mu(x')}{\abs{x - x'}^{m+\alpha q}} \dd\mu(x) 
= \frac{D^q}{2}\, \Rcal_\mu(\1_{M_j}),
\end{equation}
using $\abs{\1_{M_j}(x) - \1_{M_j}(x')}^q = 
\abs{\1_{M_j}(x) - \1_{M_j}(x')}$.  It remains to show that $\Rcal_\mu(\1_{M_j}) <+\infty$. We can cover $\partial_M M_j$ with finitely many charts, which is possible by compactness. On each of these charts we may argue as in Step~4 by using a change of variables in combination with the Lipschitz continuity of the chart maps to estimate the contribution by a constant times
\[ 
\int_{\Omega}\int_{\Omega} 
\frac{|\1_{\widetilde{E}}(z) - \1_{\widetilde{E}}(z')|}
{|z - z'|^{m+\alpha q}} \dd z\dd z',
\]
where $\Omega \subset \R^m$ is a bounded open set and 
$\widetilde{E} := \psi^{-1}(M_j \cap V)$ for a suitable set 
$V \subset M$; as in Step~1, we ensure also that $\Hcal^{m-1}(\partial_M V)<+\infty$. Since $\psi$ is bi-Lipschitz and 
$\Hcal^{m-1}(\partial_M M_j) <+\infty$ by~Step~1, we 
have $\Hcal^{m-1}(\partial \widetilde{E}) <+\infty$ in $\R^m$. 
By~\cite[Proposition~3.62]{AmbFusPal00}, this gives 
$\1_{\widetilde{E}} \in \BV(\R^m)$. As in Step~4, we conclude
\[
\int_{\Omega}\int_{\Omega} 
\frac{|\1_{\widetilde{E}}(z) - \1_{\widetilde{E}}(z')|}
{|z - z'|^{m+\alpha q}} \dd z\dd z' <+\infty.
\]
Given the compactness of $\partial_M M_j$, there exists a $r_0>0$ such that for every $x \in M_j$ and $x' \in M \setminus M_j$ with $|x-x'| < r_0$, we have that $x,x'$ are contained in a single chart. Hence, by summing over the charts covering $\partial_M M_j$, we infer
\[
\iint_{|x-x'|<r_0} 
\frac{\abs{\1_{M_j}(x) - \1_{M_j}(x')}}{\abs{x-x'}^{m+\alpha q}} 
\dd\mu(x)\dd\mu(x')<+\infty.
\]
The remaining part of the integral can be estimated by
\[
\iint_{\abs{x-x'} \geq r_0} 
\frac{\abs{\1_{M_j}(x) - \1_{M_j}(x')}}{\abs{x-x'}^{m+\alpha q}} 
\dd\mu(x)\dd\mu(x')
\leq r_0^{-(m+\alpha q)} <+\infty.
\]
This shows that $\Rcal_\mu(\1_{M_j}) <+\infty$. Combining~\eqref{eq:triangle}, \eqref{eq:decomp}, and the bounds from 
Steps~4 and~5, finishes the proof.
\end{proof}

\begin{rem}\label{rem:peano}The condition $m \geq n$ in Theorem \ref{thm:twomanifolds} is required for the use of Brenier's theorem in our proof. However, this restriction is not essential for the existence of admissible maps. As an example, consider $B_1$ the unit ball in $\R^m$ and the map $T = \gamma_n \circ T_m : B_1 \to [0,1]^n$ with
\[T_m(x) = |x|^m,\quad\text{for which }(T_m)_\#\left(\frac{1}{\mathcal{L}^m(B_1)}\mathcal{L}^m \mres B_1\right) = \mathcal{L}^1,\]
and $\gamma_n:[0,1] \to [0,1]^n$ a $1/n$-H\"older continuous Peano curve satisfying $(\gamma_n)_\#\mathcal{L}^1 = \mathcal{L}^n$ such as the one constructed in \cite[Theorem~4.55, Theorem~4.60]{Mil80}. Since $T_m$ is Lipschitz we get that $T$ is again $1/n$-H\"older, which can be easily seen to imply that $\mathcal{R}_{\frac{1}{\mathcal{L}^m(B_1)}\mathcal{L}^m \mres B_1}(T) <+\infty$ for arbitrary $q$ whenever $\alpha < 1/n$.

Note that for $q<n$ this is a strictly stronger requirement than the standing assumption $\alpha<1/q$; it originates from the H\"older exponent of $\gamma_n$, for which $1/n$ is optimal, since a curve filling the $n$-dimensional cube cannot be H\"older continuous of any larger exponent.% and thus fitting the setting of Theorem \ref{thm:twomanifolds} with $q \leq n$ after trivially embedding $B_1$ and $[0,1]^n$ in a common ambient Euclidean space $\R^d$ with $d \geq n$.
\end{rem}

\section{First-order optimality conditions}\label{sec:EL}

In this section we derive first-order optimality conditions for
minimizers of the regularized Monge problem~\eqref{eq:regularizedMonge1}.
There are two natural classes of admissible perturbations through a
minimizer~$T$, each preserving the constraint $T_\#\mu=\nu$:
\begin{itemize}
    \item[(i)] \emph{target variations} $T_t:=\Phi_t\circ T$, where
    $\Phi_t$ is the flow of a vector field that preserves~$\nu$;
    \item[(ii)] \emph{source variations} $T_t:=T\circ \Phi_t$, where $\Phi_t$ is
    the flow of a vector field that preserves~$\mu$.
\end{itemize}
Each class produces a different first-order identity. Target variations
yield a weak \emph{Euler--Lagrange identity}, while source variations yield a
\emph{Noether-type identity}, see~Theorem~\ref{thm:weak-EL}.

We have the same assumptions as in Section~\ref{sec:Nonlocal_reg_OT}, and recall the functionals $\Gcal_\epsilon$ and $\Rcal_\mu$ from \eqref{eq:regularizedMonge1} and \eqref{eq:regularizer}; we also assume without loss of generality that the kernel $\omega\colon\R^d\times\R^d\to[0,+\infty]$ is symmetric. Additionally, we assume that $c\in C^1(\R^d \times \R^d)$.

We recall some properties of flows of vector fields that we need. Specifically, for $\xi\in C_c^1(\R^d;\R^d)$ we can consider its flow $(\Phi_t)_{t\in\R}$, that is, the unique solution to
\begin{equation}\label{eq:target-flow}
    \frac{\dd}{\dd t}\Phi_t(y)=\xi(\Phi_t(y)),
    \qquad \Phi_0=\Id.
\end{equation}
Since $\xi$ has compact support and is $C^1$, $\Phi_t$ is a $C^1$
diffeomorphism of $\R^d$ for every $t\in\R$. Suppose that for some measure $\gamma \in \Pcal(\R^d)$ with compact support that
\begin{equation}\label{eq:div-nu}
    \div(\xi\,\gamma)=0
\end{equation}
holds in the sense of distributions, that is $\int_{\R^d}\nabla\varphi\cdot\xi\dd\gamma=0$ for every
$\varphi\in C_c^\infty(\R^d)$. We claim that then $(\Phi_t)_\#\gamma=\gamma$
for every $t$. Indeed, the curve $t\mapsto\gamma_t:=(\Phi_t)_\#\gamma$ is the
unique solution of the continuity equation
$\partial_t\gamma_t+\div(\xi\,\gamma_t)=0$ with $\gamma_0=\gamma$. Uniqueness holds
because $\xi$ is $C^1$ with compact support, hence Lipschitz, so the
classical method of characteristics applies (see for instance
\cite[Chapter~8]{AGS05}). By~\eqref{eq:div-nu}, the constant curve
$\gamma_t\equiv\gamma$ solves the same Cauchy problem, and uniqueness forces
$(\Phi_t)_\#\gamma=\gamma$. We can now phrase the weak form of the optimality conditions.

\begin{thm}[Optimality conditions]\label{thm:weak-EL}
Let $T\colon\R^d\to\R^d$ be a minimizer
of $\Gcal_\epsilon$ with $\Rcal_\mu(T)<+\infty$.
\begin{itemize}
\item[(i)] For every $\xi\in C_c^1(\R^d;\R^d)$ satisfying $\div(\xi\,\nu)=0$, it holds that
\begin{equation}\label{eq:weak-EL}
\begin{aligned}
    \int_{\R^d}& D_yc(x,T(x))\cdot \xi(T(x))\dd\mu(x)
    \\&+\epsilon q
    \int_{\R^d}\int_{\R^d}
    \frac{(T(x)-T(x'))\cdot
    \bigl(\xi(T(x))-\xi(T(x'))\bigr)}{\abs{T(x)-T(x')}^{2-q}}\,\omega(x,x')
    \dd\mu(x)\dd\mu(x')=0.
    \end{aligned}
\end{equation}

    \item[(ii)] Suppose, additionally, that $\omega(x,x')=\omega_0(x-x')$ for every $x,x'\in \R^d$ with
    \begin{equation}\label{eq:k1k2}
        \omega_0 \in C^1(\R^d \setminus\{0\}) \quad \text{and} \quad |\nabla\omega_0(h)|\leq C_K\,|h|^{-1}\,\omega_0(h),
        \quad \text{for all $h \in \R^d \setminus \{0\}$}.
    \end{equation}
    where $C_K$ is a positive constant. Then, for every $\xi\in C_c^1(\R^d;\R^d)$ satisfying $\div(\xi\,\mu)=0$, it holds that
\begin{equation}\label{eq:weak-source}
\begin{aligned}
    \int_{\R^d}& D_xc(x,T(x))\cdot \xi(x)\dd\mu(x) \\
    &+\epsilon\int_{\R^d}\!\int_{\R^d} |T(x)-T(x')|^q\,
    \nabla\omega_0(x-x')\cdot\bigl(\xi(x)-\xi(x')\bigr)\dd\mu(x)\dd\mu(x')=0.
    \end{aligned}
\end{equation}
\end{itemize}
\end{thm}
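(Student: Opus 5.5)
Both identities will come from differentiating $t\mapsto \Gcal_\epsilon(T_t)$ at $t=0$ along an admissible one-parameter family and using minimality. For (i) I take $T_t:=\Phi_t\circ T$, with $\Phi_t$ the flow of $\xi$. By the discussion preceding the theorem, $\div(\xi\nu)=0$ gives $(\Phi_t)_\#\nu=\nu$, so $(T_t)_\#\mu=\nu$ and $T_t\in\Tcal(\mu,\nu)$. For (ii) I take $T_t:=T\circ\Phi_t$, where $\Phi_t$ now preserves $\mu$. Then $(T_t)_\#\mu=T_\#(\Phi_t)_\#\mu=\nu$. Since $T$ is a minimizer, $t\mapsto\Gcal_\epsilon(T_t)$ has a minimum at $t=0$. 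It therefore suffices to show that this function is finite near $0$ and differentiable at $0$, and to identify the derivative; the derivative must then vanish.

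\textbf{Cost term.} The supports are compact, $c\in C^1$, and $\Phi_t(y)=y+t\xi(y)+o(t)$ uniformly because $\xi$ is $C^1$ with compact support. Differentiating under the integral by dominated convergence gives $\int D_yc(x,T(x))\cdot\xi(T(x))\dd\mu$ in case (i). In case (ii) the direct route fails, since $T$ is not differentiable. Instead, $\mu$-invariance of $\Phi_t$ lets me rewrite
\[
\int c(x,T(\Phi_t x))\dd\mu=\int c(\Phi_{-t}y,T(y))\dd\mu(y),
\]
whose derivative is $-\int D_xc\cdot\xi\dd\mu$. The sign is fixed by the corresponding step in the regularizer.

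\textbf{Regularizer, case (i).} Write $a=T(x)$ and $b=T(x')$. Since $\Phi_t(a)-\Phi_t(b)=(a-b)+t\int_0^1 D\xi(\cdot)\dd s\,(a-b)$, we get $|\Phi_t(a)-\Phi_t(b)|\le e^{|t|\Lip\xi}|a-b|$. Hence the difference quotient of $|\Phi_t(a)-\Phi_t(b)|^q$ is bounded by $C|a-b|^q$. This bound is integrable against $\omega\,\mu\otimes\mu$ because $\Rcal_\mu(T)<\infty$. Pointwise, the derivative at $0$ is $q|a-b|^{q-2}(a-b)\cdot(\xi(a)-\xi(b))$, which vanishes when $a=b$ since $q>1$. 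Dominated convergence then yields the term in \eqref{eq:weak-EL}.

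\textbf{Regularizer, case (ii).} Here I expect the main obstacle. Again I use the invariance of $\mu\otimes\mu$ under $\Phi_t\times\Phi_t$ and change variables:
\[
\Rcal_\mu(T\circ\Phi_t)=\iint |T(y)-T(y')|^q\,\omega_0\bigl(\Phi_{-t}y-\Phi_{-t}y'\bigr)\dd\mu\dd\mu.
\]
This moves the $t$-dependence onto the kernel, so that only the regularity assumption \eqref{eq:k1k2} is needed. Set $h=y-y'$ and $h_t=\Phi_{-t}y-\Phi_{-t}y'$. Then $h_t=h-t\int_0^1(\xi(\Phi_{-st}y)-\xi(\Phi_{-st}y'))\dd s$, and $|h_t-h|\le C|t||h|$. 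Consequently, for small $|t|$, the segment between $h$ and $h_t$ stays in the annulus $\{\tfrac12|h|\le|z|\le 2|h|\}$ and avoids $0$. By the mean value theorem and \eqref{eq:k1k2},
\[
|\omega_0(h_t)-\omega_0(h)|\le C_K|t|\,C\sup_{z}|z|^{-1}\omega_0(z)\,|h|.
\]
To close the argument I need $\omega_0(z)\le C'\omega_0(h)$ on that annulus. Integrating $|\nabla\log\omega_0|\le C_K/|z|$ along a path inside the annulus gives a Harnack-type comparison $\omega_0(z)\le e^{C''C_K}\omega_0(h)$, provided the path length is at most a multiple of $|h|$ (a short arc plus a radial segment suffices). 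Where $\omega_0$ vanishes, $\log$ is not available; I would handle this through the Gronwall form of \eqref{eq:k1k2}, which shows that $\omega_0$ vanishing at one point of the annulus forces it to vanish throughout. With this comparison the difference quotient is dominated by $C|T(y)-T(y')|^q\omega_0(y-y')$, which is integrable, and dominated convergence gives the derivative
\[
-\iint|T(y)-T(y')|^q\nabla\omega_0(y-y')\cdot(\xi(y)-\xi(y'))\dd\mu\dd\mu.
\]
Combining this with the cost derivative, both terms carry the same sign. Multiplying by $-1$ gives \eqref{eq:weak-source}.
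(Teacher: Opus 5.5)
Your proposal is correct and follows essentially the same route as the paper. It uses the same target and source variations, the same change of variables $(x,x')\mapsto(\Phi_{-t}x,\Phi_{-t}x')$ that moves the $t$-dependence onto $\omega_0$, and the same dominated-convergence bounds. The one difference is that the paper gets the comparison $\omega_0(z)\le e^{C_K}\omega_0(x-x')$ by applying Gr\"onwall directly along the segment $[\Delta_0,\Delta_t]$, where $|\Delta_t-\Delta_0|/|z|\le 1$; this makes your arc-plus-radial path and the separate handling of zeros of $\omega_0$ unnecessary.
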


\begin{proof}
\textit{Part (i).} Let $\Phi_t$ be the flow associated to the vector field $\xi$. We then define $T_t:=\Phi_t \circ T$ and since $\div(\xi\,\nu)=0$, we find
\begin{align}\label{eq:masscons}
(T_t)_{\#}\mu = (\Phi_t)_{\#}T_{\#}\mu =  (\Phi_t)_{\#}\nu = \nu.
\end{align}
Hence, $T_t$ is an admissible competitor, so the fact that $T$ minimizes $\Gcal_\epsilon$ yields
$\frac{\dd}{\dd t}\big|_{t=0}\Gcal_\epsilon(T_t)=0$, supposing the derivative exists. We next compute this
derivative for each term separately, which yields \eqref{eq:weak-EL}.

First, we derive the cost term. The chain rule together with $\Phi_0=\Id$ and $\dot\Phi_0=\xi$ gives,
for $\mu$-a.e.\ $x \in \R^d$,
\[
    \frac{\dd}{\dd t}\bigg|_{t=0}c(x,T_t(x))
    =D_yc(x,T(x))\cdot\xi(T(x)).
\]
To pass the derivative under the integral sign, we provide a
$\mu$-integrable majorant. Since $\nu$ has compact support and
$T_\#\mu=\nu$, we find that $T(x)\in\supp\nu$ for $\mu$-a.e.\ $x \in \R^d$. Fix a closed ball $\overline{B_R}$ containing $\operatorname{supp} \nu$. As $\xi$ has compact support, the flow keeps $\overline{B_R}$ invariant for $R$ large enough, so $\Phi_t(T(x)) \in \overline{B_R}$, that is $\left|\Phi_t(T(x))\right| \leq R$, for $\mu$-a.e. $x \in \R^d$ and all $t \in \R$. Then, the mean value theorem applied to $y\mapsto c(x,y)$ along the segment
$[T(x),\Phi_t(T(x))]$ gives
\[
    \frac{|c(x,T_t(x))-c(x,T(x))|}{|t|}
    \leq \|D_y c(x,\cdot)\|_{L^{\infty}(\overline{B_R})}\|\xi\|_\infty. 
\]
The right-hand side is independent of $t$ and $\mu$-integrable, so the dominated convergence theorem yields
\begin{equation}\label{eq:cost-der}
    \frac{\dd}{\dd t}\bigg|_{t=0}\int_\Omega c(x,T_t(x))\dd\mu(x)
    = \int_\Omega D_yc(x,T(x))\cdot\xi(T(x))\dd\mu(x).
\end{equation}

Now, we consider the regularizer. Fix $x,x'\in\R^d$. The chain rule applied to
$t\mapsto|\Phi_t(T(x))-\Phi_t(T(x'))|^q$ gives
\begin{equation}\label{eq:ht-der}
    \frac{\dd}{\dd t}\bigg|_{t=0}|\Phi_t(T(x))-\Phi_t(T(x'))|^q
    =q\,\frac{(T(x)-T(x'))\cdot
    \bigl(\xi(T(x))-\xi(T(x'))\bigr)}{|T(x)-T(x')|^{2-q}}.
\end{equation}
Also here we want to dominate  the corresponding difference quotient uniformly. Setting
$L:=\|\nabla\xi\|_\infty$, the vector field $\xi$ is Lipschitz with constant
$L$. Fix $y,y'\in\R^d$. Subtracting the flow equations and
integrating, we obtain
\begin{equation}\label{eq:flow-int}
    \Phi_t(y)-\Phi_t(y')-(y-y')
    =\int_0^t\bigl(\xi(\Phi_s(y))-\xi(\Phi_s(y'))\bigr)\dd s.
\end{equation}
Bounding the integrand by the Lipschitz estimate
$|\xi(\Phi_s(y))-\xi(\Phi_s(y'))|\leq L\,|\Phi_s(y)-\Phi_s(y')|$ and
the latter by $e^{L|s|}|y-y'|$ from Gr\"onwall's inequality, we obtain
$|\Phi_t(y)-\Phi_t(y')-(y-y')|\leq L|t|\,e^{L|t|}|y-y'|$. Restricting
to $|t|\leq 1/(2L)$, so that $L|t|\leq\tfrac12$ and
$e^{L|t|}\leq e^{1/2}<2$, this gives
\begin{equation}\label{eq:flow-incr}
    |\Phi_t(y)-\Phi_t(y')-(y-y')|\leq 2L|t|\,|y-y'|,
\end{equation}
and since $2L|t|\leq 1$, the triangle inequality gives in turn
\begin{equation}\label{eq:flow-Lip}
    |\Phi_t(y)-\Phi_t(y')|\leq 2|y-y'|.
\end{equation}
Again, we apply the mean value theorem to $z\mapsto|z|^q$ along the segment
$[\,T(x)-T(x'),\ \Phi_t(T(x))-\Phi_t(T(x'))\,]$, which gives
\begin{align*}
    \bigl||\Phi_t(T(x))-&\Phi_t(T(x'))|^q-|T(x)-T(x')|^q\bigr|
    \\&\leq q(2|T(x)-T(x')|)^{q-1}
    \bigl|\Phi_t(T(x))-\Phi_t(T(x'))-(T(x)-T(x'))\bigr|,
\end{align*}
since the gradient $\nabla|z|^q=q|z|^{q-2}z$ has norm $q|z|^{q-1}$, and
every point $z$ of the segment satisfies $|z|\leq 2|T(x)-T(x')|$ by
\eqref{eq:flow-Lip} with $q-1>0$. Bounding the last factor by
\eqref{eq:flow-incr} and dividing by $|t|$, we get
\begin{equation}\label{eq:ht-dominator}
    \frac{\bigl||\Phi_t(T(x))-\Phi_t(T(x'))|^q-|T(x)-T(x')|^q\bigr|}{|t|}
    \leq q\,2^{q}L\,|T(x)-T(x')|^q
\end{equation}
for all $|t|\leq 1/(2L)$ and $x,x'\in\R^d$. The right-hand side is
integrable against $\omega(x,x')\dd\mu\dd\mu$, with integral
$q\,2^{q}L\,\Rcal_\mu(T)<+\infty$, so together
with \eqref{eq:ht-der} dominated convergence yields
\begin{equation}\label{eq:R-der}
    \frac{\dd}{\dd t}\bigg|_{t=0}\Rcal_\mu(T_t)
    = q\int_\Omega\!\int_\Omega
    \frac{(T(x)-T(x'))\cdot
    \bigl(\xi(T(x))-\xi(T(x'))\bigr)}{|T(x)-T(x')|^{2-q}}\,
    \omega(x,x')\dd\mu(x)\dd\mu(x').
\end{equation}
Combining \eqref{eq:cost-der}, \eqref{eq:R-der} and the optimality of
$T$ proves \eqref{eq:weak-EL}. \medskip

\textit{Part (ii).}  Let $\Phi_t$ be the flow associated to the vector field $\xi$. We then define $T_t:=T \circ \Phi_t$ and since $\div(\xi\,\mu)=0$, we find, similarly to \eqref{eq:masscons} that $(T_t)_{\#}\mu = \nu$.
Hence, $T_t$ is an admissible competitor, so the fact that $T$ minimizes $\Gcal_\epsilon$ yields
$\frac{\dd}{\dd t}\big|_{t=0}\Gcal_\epsilon(T_t)=0$, supposing the derivative exists. We next compute this
derivative for each term separately, which yields \eqref{eq:weak-source}.

For the cost term, using $(\Phi_t)_\#\mu=\mu$ and the change of variables
$x\mapsto \Phi_{-t}(x)$, we obtain
\[
    \int_\Omega c(x,T(\Phi_t(x)))\dd\mu(x)
    =\int_\Omega c(\Phi_{-t}(x),T(x))\dd\mu(x).
\]
Since $\Phi_0=\Id$ and $\frac{\dd}{\dd t}\big|_{t=0}\Phi_{-t}(x)=-\xi(x)$, it holds
\begin{equation}\label{eq:cost-source-der}
    \frac{\dd}{\dd t}\bigg|_{t=0}\int_{\R^d} c(x,T(\Phi_t(x)))\dd\mu(x)
    =-\int_{\R^d} D_xc(x,T(x))\cdot \xi(x)\dd\mu(x).
\end{equation}
The validity of interchanging the derivative with the integral follows similarly to part~(i).

Next, we consider the regularizer. Using $(\Phi_t)_\#\mu=\mu$
and the change of variables
\[
(x,x')\mapsto(\Phi_{-t}(x),\Phi_{-t}(x')),
\]
we arrive at
\begin{equation}\label{eq:R-source-rewrite}
    \Rcal_\mu(T_t)=\int_{\R^d}\!\int_{\R^d} |T(x)-T(x')|^q\,
    \omega_0(\Phi_{-t}(x)-\Phi_{-t}(x'))\dd\mu(x)\dd\mu(x').
\end{equation}
Fix $x\neq x'$ and set $\Delta_t:=\Phi_{-t}(x)-\Phi_{-t}(x')$. Then,
$\Delta_0=x-x'$ and $\frac{\dd}{\dd t}\big|_{t=0}\Delta_t
=-(\xi(x)-\xi(x'))$. By the chain rule,
\begin{equation}\label{eq:K-der}
    \frac{\dd}{\dd t}\bigg|_{t=0}\omega_0(\Phi_{-t}(x)-\Phi_{-t}(x'))
    =-\nabla\omega_0(x-x')\cdot\bigl(\xi(x)-\xi(x')\bigr).
\end{equation}
Set $L:=\|\nabla \xi\|_{\infty}$ and $t_1:=1/(4L)$, so that \eqref{eq:flow-incr} for
the flow holds on $|t|\leq t_1$.  We claim that
\begin{equation}\label{eq:K-dominator}
    \frac{|\omega_0(\Delta_t)-\omega_0(\Delta_0)|}{|t|}
    \leq 4C_KLe^{C_K}\,\omega_0(x-x')
    \qquad \text{for all } |t|\leq t_1.
\end{equation}
Indeed, by \eqref{eq:flow-incr}, we have
\begin{equation}\label{eq:delta-incr}
    |\Delta_t-\Delta_0|\leq 2L|t|\,|x-x'|\leq\tfrac12|x-x'|,
\end{equation}
so every $h$ on the segment $[\Delta_0,\Delta_t]$ satisfies, by the triangle inequality,
\begin{equation}\label{eq:annulus}
    \tfrac12|x-x'|\leq|h|\leq \tfrac32|x-x'|.
\end{equation}
Parametrize the segment as $h(\tau)=\Delta_0+\tau(\Delta_t-\Delta_0)$,
$\tau\in[0,1]$. By the chain rule and~\eqref{eq:k1k2}, we get
\[
    \Bigl|\tfrac{\dd}{\dd\tau}\omega_0(h(\tau))\Bigr|
    \leq |\nabla\omega_0(h(\tau))|\,|\Delta_t-\Delta_0|
    \leq C_K\,\frac{|\Delta_t-\Delta_0|}{|h(\tau)|}\,\omega_0(h(\tau))
    \leq C_K\,\omega_0(h(\tau)),
\]
using $|h(\tau)|\geq\tfrac12|x-x'|$ from \eqref{eq:annulus} and
$|\Delta_t-\Delta_0|\leq\tfrac12|x-x'|$ from \eqref{eq:delta-incr}. Therefore we can apply
Gr\"onwall's inequality, which gives $\omega_0(h(\tau))\leq
e^{C_K\tau}\omega_0(h(0))\leq e^{C_K}\omega_0(h(0))$ for all
$\tau\in[0,1]$. Since $h(0)=x-x'$, we obtain
\begin{equation}\label{eq:omega-comp}
    \omega_0(h)\leq e^{C_K}\,\omega_0(x-x')
    \quad \text{for all $h\in[\Delta_0,\Delta_t]$.}
\end{equation}
Combining~\eqref{eq:k1k2}, \eqref{eq:annulus} and \eqref{eq:omega-comp},
\[
    |\nabla\omega_0(h)|\leq \frac{C_K}{|h|}\,\omega_0(h)
    \leq \frac{2C_K}{|x-x'|}\,e^{C_K}\omega_0(x-x'),
\]
and by the fundamental theorem of calculus together with \eqref{eq:delta-incr},
\[
    |\omega_0(\Delta_t)-\omega_0(\Delta_0)|
    \leq \sup_{h\in[\Delta_0,\Delta_t]}|\nabla\omega_0(h)|\;
    |\Delta_t-\Delta_0|
    \leq  4C_KL|t|\,e^{C_K}\omega_0(x-x'),
\]
which is \eqref{eq:K-dominator}.

On the diagonal $x=x'$ the factor $|T(x)-T(x')|^q$ vanishes, so the
integrand of \eqref{eq:R-source-rewrite} is identically zero there for
every $t$. Therefore, it is sufficient to dominate the difference quotients
off the diagonal. By~\eqref{eq:K-dominator}, for $0<|t|\leq t_1$, we have
\[
    \left|\,|T(x)-T(x')|^q\,
    \frac{\omega_0(\Delta_t)-\omega_0(\Delta_0)}{t}\,\right|
    \leq 4C_KLe^{C_K}\,|T(x)-T(x')|^q\,\omega_0(x-x'),
\]
where the right-hand side is $\mu\otimes\mu$-integrable, with integral a
multiple of $\Rcal_\mu(T)<+\infty$. Using~\eqref{eq:K-der}, dominated convergence gives
\begin{equation}\label{eq:R-source-der}
    \frac{\dd}{\dd t}\bigg|_{t=0}\Rcal_\mu(T_t)
    =-\int_\Omega\!\int_\Omega |T(x)-T(x')|^q\,
    \nabla\omega_0(x-x')\cdot\bigl(\xi(x)-\xi(x')\bigr)\dd\mu(x)\dd\mu(x').
\end{equation}
Combining \eqref{eq:cost-source-der}, \eqref{eq:R-source-der} and the
optimality of $T$ proves \eqref{eq:weak-source}.
\end{proof}
\begin{rem}[Relation between source and target conditions]\label{rem:connectionsourcetarget}
The two conditions \eqref{eq:weak-EL} and \eqref{eq:weak-source} are different conditions that arise by either taking outer or inner variations. We note, however, that both conditions can be interpreted in the same way, by reversing the roles of $\mu$ and $\nu$ and replacing $T$ by its inverse $S:=T^{-1}$. Indeed, formally speaking, $S$ will minimize
\[
\int_{\R^d} c(S(y),y)\dd \nu(y) + \epsilon \int_{\R^d}\!\int_{\R^d}|y-y'|^q \omega_0(S(y)-S(y')) \dd \nu(y)\dd \nu(y').
\]
If we then derive the optimality conditions as in \eqref{eq:weak-EL} with variations $\Phi_t \circ S$, we find that
\begin{align*}
&\int_{\R^d} D_x c(S(y),y) \cdot \xi(S(y))\dd \nu(y) \\&\qquad + \epsilon \int_{\R^d}\!\int_{\R^d}|y-y'|^q \nabla \omega_0(S(y)-S(y')) \cdot (\xi(S(y))-\xi(S(y'))) \dd \nu(y)\dd \nu(y')=0,
\end{align*}
for all $\xi \in C_c^1(\R^d;\R^d)$ with $\div(\xi \, \mu)=0$. Performing the change of variables $y=T(x)$, we exactly recover \eqref{eq:weak-source}. Although this highlights a symmetry between the conditions, especially when the cost is symmetric, the regularizer is not symmetric under reversing the roles of $\mu$ and $\nu$, so the equations for $T$ and $S$ are not the same.
\end{rem}

\begin{example}[Fractional case with quadratic cost]
    Suppose that $c(x,y)=\frac{1}{2}|x-y|^2$ and $\omega(x,x')=|x-x'|^{-(d+\alpha q)}$ for $\alpha \in (0,1)$. Then, all the assumptions of Theorem~\ref{thm:weak-EL} are satisfied and \eqref{eq:weak-EL} and \eqref{eq:weak-source} reduce to
    \begin{align*}
        \int_{\R^d}&(T(x)-x)\cdot\xi(T(x))\dd\mu(x)
    \\&+\epsilon q\int_{\R^d}\!\int_{\R^d}
    \frac{(T(x)-T(x'))\cdot\bigl(\xi(T(x))-\xi(T(x'))\bigr)}
    {|T(x)-T(x')|^{2-q}\,|x-x'|^{d+\alpha q}}\dd\mu(x)\dd\mu(x')=0,
    \end{align*}
    and
    \begin{align*}
        \int_{\R^d}&(T(x)-x)\cdot \xi(x)\dd\mu(x)
    \\&+\epsilon(d+\alpha q)\int_{\R^d}\!\int_{\R^d}
    \frac{|T(x)-T(x')|^q\,(x-x')\cdot\bigl(\xi(x)-\xi(x')\bigr)}
    {|x-x'|^{d+\alpha q+2}}\dd\mu(x)\dd\mu(x')=0,
    \end{align*}
    for all $\xi \in C_c^1(\R^d;\R^d)$ satisfying $\div(\xi\,\nu)=0$ or $\div(\xi \, \mu)=0$, respectively.
\end{example}
We now turn to deriving a strong version of both equations \eqref{eq:weak-EL} and \eqref{eq:weak-source}. To that end, we introduce the operators
\[
\Lcal_\omega T(x):=\int_{\R^d}
    \frac{T(x)-T(x')}{|T(x)-T(x')|^{2-q}}\,\omega(x,x')\dd\mu(x')
\]
and
\[
\Kcal_{\omega_0}T(x):=\int_{\R^d} |T(x)-T(x')|^q\,
    \nabla\omega_0(x-x')\dd\mu(x').
\]
\begin{thm}[Strong optimality conditions]\label{thm:strong-form}
Let $T\colon\R^d\to\R^d$ be a minimizer of $\Gcal_\epsilon$ with $\Rcal_\mu(T)<+\infty$.
\begin{itemize}
    \item[(i)] Let $\Omega'\subset \R^d$ be open and bounded with $\nu=\sigma\,\Lcal^d\mres\Omega'$ for $\sigma\in C^1(\Omega')$ with $\sigma|_{\Omega'}>0$. Suppose that
    \begin{equation}\label{eq:h-int}
        \int_{\R^d}\!\int_{\R^d} |T(x)-T(x')|^{q-1}\,\omega(x,x')
        \dd\mu(x)\dd\mu(x')<+\infty,
    \end{equation}
    and that there exists a Borel set $B$ with $\mu(B)=1$ such that $T|_B$ is injective. Then, there exists a $\psi\in W^{1,1}_{\loc}(\Omega')$ such that
\begin{equation}\label{eq:strong-EL}
    D_yc(x,T(x))+2\epsilon q \Lcal_\omega T(x) =\nabla\psi(T(x)) \quad \text{for $\mu$-a.e.\ }x\in\R^d.
\end{equation}

\item[(ii)] In addition to \eqref{eq:k1k2}, let $\Omega\subset \R^d$ be open and bounded with $\mu=\sigma\,\Lcal^d\mres\Omega$ for $\sigma\in C^1(\Omega)$ with $\sigma|_{\Omega}>0$. Suppose also that 
    \begin{equation}\label{eq:h-int2}
        \int_{\R^d}\!\int_{\R^d} |T(x)-T(x')|^{q}\,|\nabla \omega_0(x-x')|
        \dd\mu(x)\dd\mu(x')<+\infty.
    \end{equation}
    Then, there exists a $\psi\in W^{1,1}_{\loc}(\Omega)$ such that
\begin{equation}\label{eq:strong-source}
    D_xc(x,T(x))+2\epsilon \Kcal_{\omega_0} T(x) =\nabla\psi(x) \quad \text{for $\mu$-a.e.\ }x\in\R^d.
\end{equation}
\end{itemize}
\end{thm}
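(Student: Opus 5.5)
The plan is to start from the weak identities of Theorem~\ref{thm:weak-EL}, move the double integral onto a single integral against $\mu$ by a symmetrization argument, and then identify the resulting vector field as a gradient by a de Rham-type argument. Choosing the test fields $\xi=\eta/\sigma$ is what turns the constraint $\div(\xi\,\nu)=0$ (resp.\ $\div(\xi\,\mu)=0$) into the classical condition $\div\eta=0$.

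\emph{Part (i).} First I symmetrize the nonlocal term in \eqref{eq:weak-EL}. By \eqref{eq:h-int}, the function $(x,x')\mapsto |T(x)-T(x')|^{q-1}\omega(x,x')$ is $\mu\otimes\mu$-integrable. Hence $\Lcal_\omega T(x)$ is defined for $\mu$-a.e.\ $x$ and lies in $L^1(\mu)$, since $|\Lcal_\omega T(x)|\le\int |T(x)-T(x')|^{q-1}\omega(x,x')\dd\mu(x')$. Because $\xi$ is bounded, Fubini applies to each of the two terms $\xi(T(x))$ and $\xi(T(x'))$ separately. Exchanging $x\leftrightarrow x'$ in the second term and using the symmetry of $\omega$, I get
\[
q\iint \frac{(T(x)-T(x'))\cdot(\xi(T(x))-\xi(T(x')))}{|T(x)-T(x')|^{2-q}}\,\omega\dd\mu\dd\mu = 2q\int \Lcal_\omega T(x)\cdot \xi(T(x))\dd\mu(x).
\]
So \eqref{eq:weak-EL} reads $\int V(x)\cdot\xi(T(x))\dd\mu(x)=0$, where $V:=D_yc(\cdot,T)+2\epsilon q\Lcal_\omega T\in L^1(\mu)$. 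Here $D_yc(\cdot,T)$ is bounded because both supports are compact.

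Next I use the injectivity of $T$ on $B$ to write $V=W\circ T$ $\mu$-a.e.\ with $W\in L^1(\nu)$. By the Lusin--Souslin theorem, $T(B)$ is Borel and $S:=(T|_B)^{-1}\colon T(B)\to B$ is Borel. I set $W:=V\circ S$ on $T(B)$ and $W:=0$ elsewhere; note $\nu(T(B))=\mu(B)=1$. The change of variables $T_\#\mu=\nu$ then gives $\int_{\Omega'} W\cdot\xi\,\sigma\dd y=0$. For any divergence-free $\eta\in C^\infty_c(\Omega';\R^d)$, the field $\xi:=\eta/\sigma$ is $C^1_c$ because $\sigma\in C^1$ is positive on $\Omega'$, and it satisfies $\div(\xi\,\nu)=\div\eta=0$. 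Hence $\int W\cdot\eta\dd y=0$ for all such $\eta$, with $W\in L^1_{\loc}(\Omega')$. De Rham's theorem for distributions (a distribution annihilating all divergence-free test fields on an open set is a gradient, cf.\ Temam's formulation) gives $W=\nabla\psi$ for some distribution $\psi$. Since $\nabla\psi\in L^1_{\loc}$, we get $\psi\in W^{1,1}_{\loc}(\Omega')$. Composing with $T$ yields \eqref{eq:strong-EL}.

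\emph{Part (ii).} The argument is the same but simpler, because no inversion is needed. Symmetry of $\omega$ makes $\omega_0$ even, so $\nabla\omega_0$ is odd. Using \eqref{eq:h-int2} for Fubini, the same swap gives
\[
\iint |T(x)-T(x')|^q\nabla\omega_0(x-x')\cdot(\xi(x)-\xi(x'))\dd\mu\dd\mu=2\int\Kcal_{\omega_0}T\cdot\xi\dd\mu ,
\]
with $\Kcal_{\omega_0}T\in L^1(\mu)$. Testing with $\xi=\eta/\sigma$ and applying de Rham on $\Omega$ as above gives \eqref{eq:strong-source}.

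The step I expect to be the main obstacle is the de Rham step on a general, possibly non-simply-connected, open set with only $L^1_{\loc}$ data. I would rely on the distributional version, which holds on arbitrary open sets. If needed, I would first localize to balls and then patch the local potentials, noting that they differ by constants on overlaps. The measurability of $W$ via Lusin--Souslin is a secondary technical point.
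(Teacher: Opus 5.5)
Your proposal is correct and follows the paper's proof essentially step by step. The shared steps are: Fubini via \eqref{eq:h-int} (resp.\ \eqref{eq:h-int2}) plus symmetry of $\omega$ to rewrite the double integral as $2\int \Lcal_\omega T\cdot\xi(T)\dd\mu$ (resp.\ $2\int \Kcal_{\omega_0}T\cdot\xi\dd\mu$), transfer to $\nu$ through the Lusin--Souslin inverse of $T|_B$, testing with $\xi=\eta/\sigma$ for divergence-free $\eta$, and the distributional de Rham theorem (Temam) on an arbitrary open set. One small caveat concerns your fallback of patching local potentials on balls: by itself it would not suffice on non-simply-connected sets unless you also use the global orthogonality condition to show the periods vanish, but it is unnecessary because the distributional version you invoke already handles this.
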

\begin{proof}
\textit{Part (i).} We have that
\[
    \int_{\R^d} |(\Lcal_\omega T)(x)|\dd\mu(x)
    \leq 2\int_{\R^d}\!\int_{\R^d} |T(x)-T(x')|^{q-1}\,\omega(x,x')
    \dd\mu(x)\dd\mu(x'),
\]
which is finite by~\eqref{eq:h-int}, so $\Lcal_\omega T\in
L^1(\R^d,\mu;\R^d)$. Since $D_yc$ is continuous and the points $(x,T(x))$ range in the compact set $\supp\mu \times \supp\nu$ for $\mu$-a.e.\ $x$, the map $D_yc(\cdot,T)$ is $\mu$-essentially bounded, so that 
\[
h:=D_yc(\cdot,T)+2\epsilon q \Lcal_\omega T \in L^1(\R^d,\mu;\R^d).
\]
Additionally, by splitting the integral in \eqref{eq:weak-EL} and using Fubini's theorem, which is applicable by \eqref{eq:h-int}, we find
\begin{align}
    \int_{\R^d}\int_{\R^d}&
    \frac{(T(x)-T(x'))\cdot
    \bigl(\xi(T(x))-\xi(T(x'))\bigr)}{\abs{T(x)-T(x')}^{2-q}}\,\omega(x,x')
    \dd\mu(x)\dd\mu(x') = 2 \int_{\R^d} \Lcal_\omega T(x) \cdot \xi(T(x))\dd \mu(x),
\end{align}
where we also used the symmetry of $\omega$. Combined with the cost term in \eqref{eq:weak-EL}
and the definition of $h$, the Euler--Lagrange identity becomes
\begin{equation}\label{eq:weak-h}
    \int_{\R^d} h(x)\cdot\xi(T(x))\dd\mu(x)=0
    \qquad
    \text{for all}\,\xi\in C_c^1(\R^d;\R^d) \text{ with } \div(\xi\,\nu)=0.
\end{equation}
Now, let $T_B^{-1}$ be the inverse of the bijective mapping $T:B \to T(B)$, which is defined on $T(B)$; we extend it to $\R^d$ by zero. By the Lusin-Suslin theorem, $T_B^{-1}$ is Borel measurable. We then define the measurable function $\widetilde{h}:=h \circ T_B^{-1} \in L^1(\R^d,\nu;\R^d)$ and find 
\[
\int_{\R^d} \xi(y) \widetilde{h}(y)\dd \nu(y)  = \int_{T(B)} \xi(y) h(T_B^{-1}(y)) \dd \nu(y) = \int_{B} \xi(T(x))h(x)\dd \mu(x)=\int_{\R^d}\xi(T(x))h(x)\dd \mu(x),
\]
with the first equality using $\nu(T(B))=\mu(T^{-1}(T(B)))=1$, the second equality being the substitution $y=T(x)$ and using $T^{-1}_B(T(x))=x$ for all $x \in B$, and the last using that $\mu(B)=1$. From \eqref{eq:weak-h}, we now deduce
\begin{equation}\label{eq:weak-h-target}
    \int_{\R^d}\widetilde h(y)\cdot\xi(y)\dd\nu(y)=\int_{\Omega'}\widetilde h(y)\cdot\xi(y) \sigma(y)\dd y=0
    \qquad
    \text{for all}\,\xi\in C_c^1(\R^d;\R^d) \text{ with } \div(\xi\,\nu)=0.
\end{equation}
Given any $\eta\in C_c^1(\Omega';\R^d)$ with $\div\eta=0$, set
$\xi:=\eta/\sigma$; since $\sigma|_{\Omega'}>0$ we have
$\xi\in C_c^1(\Omega';\R^d)$ and $\xi\,\nu=\eta\dd y$, so
$\div(\xi\,\nu)=\div\eta\,\dd y=0$. Thus $\xi$, extended by zero outside
$\Omega'$, is admissible in~\eqref{eq:weak-h-target}, and plugging it in
gives
\[
    \int_{\Omega'}\widetilde h(y)\cdot\eta(y)\dd y=0
    \qquad
    \text{for all}\,\eta\in C_c^1(\Omega';\R^d) \text{ with } \div\eta=0;
\]
note that $\widetilde h\in L^1_{\loc}(\Omega';\R^d)$ since $\sigma$ is
bounded below on compact subsets of $\Omega'$. Thus, $\widetilde h$ is orthogonal to every divergence-free test field
on $\Omega'$, and by de Rham's theorem \cite[Chapter~I, Proposition~1.1]{Temam01}
(see also \cite[Equation~(2.19)]{louet2014optimal}) there exists
$\psi\in\mathcal D'(\Omega')$ with $\nabla\psi=\widetilde h$; since
$\widetilde h\in L^1_{\loc}(\Omega';\R^d)$, in fact
$\psi\in W^{1,1}_{\loc}(\Omega')$. Finally, using that by definition
\[
h(x) = \widetilde{h}(T(x)) \quad \text{for all $x \in B$,}
\]
we find that \eqref{eq:strong-EL} holds. \medskip

\textit{Part (ii).} Using \eqref{eq:h-int2} we can argue as in part (i) to find that
\[
h:=D_xc(\cdot ,T)+2\epsilon \Kcal_{\omega_0} T \in L^1(\R^d,\mu;\R^d),
\]
and using that $\nabla \omega_0(-h)=-\nabla \omega_0(h)$ by the symmetry of $\omega$ (i.e., $\omega_0$ is even), we can again split the integrals to find that \eqref{eq:weak-source} is equivalent to
\[
\int_{\R^d} h(x) \xi(x) \dd \mu(x) = 0 \quad \text{for all $\xi \in C_c^1(\R^d;\R^d)$ with $\div(\xi \,\mu)=0$.}
\]
Arguing as in part (i) via de Rham's theorem yields \eqref{eq:strong-source}.
\end{proof}

\begin{rem}

 a) The operator $\Lcal_\omega$ can be seen as a type of nonlocal $q$-Laplacian associated with the kernel $\omega$ and the base
measure $\mu$. When $\mu$ is the Lebesgue measure and $\omega$ is
the fractional kernel $\omega(x,x')=|x-x'|^{-(d+\alpha q)}$, it becomes the fractional $q$-Laplacian
$(-\Delta)_q^{\alpha}$ \cite{IMS16}, which, for $q=2$, is linear and reduces to the classical
fractional Laplacian $(-\Delta)^{\alpha}$ \cite{Kwa17}. On the other hand, the nonlocal operator $\Kcal_{\omega_0}$ is less standard, and is not linear even when $q=2$. It arises as a first variation of the regularizer $\Rcal_\mu$, but with respect to variations in $T^{-1}$, cf.~Remark~\ref{rem:connectionsourcetarget}, which switches the role of the kernel $\omega_0$ and $| \cdot |^q.$ \medskip

b) When $\epsilon =0$ and $c(x,y)=\frac{1}{2}|x-y|^2$, both conditions in Theorem~\ref{thm:strong-form} read
\[
T(x)-x = \nabla \psi_1(T(x)) \quad \text{and} \quad x-T(x) = \nabla \psi_2(x),
\]
which is closely related to Brenier's theorem establishing that the solution of the optimal transport problem is given by the gradient of a convex function; the first condition applies to $T^{-1}$ rather than $T$. \medskip

c) Versions of the optimality conditions with source variations \eqref{eq:weak-source} and \eqref{eq:strong-source} with local gradient penalization were already derived in \cite[Proposition~2.2.2]{louet2014optimal}.
\end{rem}

\section{\texorpdfstring{$\Gamma$}{Gamma}-convergence for vanishing regularization}\label{sec:gamma}
 In this section, we prove the $\Gamma$-convergence of the regularized optimal transport problems to the classical Kantorovich optimal transport problem as the regularization parameter vanishes. This shows that the optimal transport maps of the regularized problem converge to the classical Kantorovich plan. Moreover, if the Kantorovich plan is given by a map with sufficient regularity, then a higher order $\Gamma$-limit shows that the regularized maps converge to the classical optimal transport map that minimizes the nonlocal regularizer.
 
We recall the regularized functionals $\Fcal_\epsilon:\Pi(\mu,\nu) \to [0,+\infty]$ of \eqref{eq:regularizedOT}, which, by Proposition~\ref{prop:kantorovichismonge}, can be written as
\[
 \Fcal_\epsilon(\pi):=\begin{cases}
    \displaystyle \int_{\R^d} \!c\big(x,T(x)\big) \dd \mu(x)+\epsilon\Rcal_\mu(T) &\text{if $\pi=\pi_T$ for some $T \in \Tcal(\mu,\nu)$},\\
    +\infty &\text{else}.
\end{cases}
\]
We assume, furthermore, that
\begin{equation}\label{eq:densityassumption}
\Acal:=\left\{\pi \in \Pi(\mu,\nu)\,:\, \pi=\pi_T \ \text{and} \ \Rcal_{\mu}(T)<+\infty\right\}  \ \ \text{is weakly* dense in $\Pi(\mu,\nu)$.}
\end{equation}
We obtain the following $\Gamma$-limit, see e.g.~\cite{Dal93, Brai02} for an introduction to $\Gamma$-convergence; we mention in passing that $\Pi(\mu,\nu)$ endowed with the weak* topology is metrizable, so that we may use the sequential version of $\Gamma$-convergence.
\begin{prop}\label{prop:firstordergamma}
Assume \eqref{eq:nonintegrable} and \eqref{eq:densityassumption} hold, then the sequence $(\Fcal_\epsilon)_\epsilon$ $\Gamma$-converges as $\epsilon \to 0$ with respect to the weak* topology in $\Pi(\mu,\nu)$ to the Kantorovich functional
\[
\Fcal:\Pi(\mu,\nu) \to [0,+\infty), \quad \Fcal(\pi):=\int_{\R^d \times\R^d}c(x,y)\dd\pi(x,y).
\]
\end{prop}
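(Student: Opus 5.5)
We verify the two defining inequalities of $\Gamma$-convergence along an arbitrary sequence $\epsilon_n \to 0$. Since $\Pi(\mu,\nu)$ with the weak* topology is metrizable, the sequential characterization suffices.

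\emph{Liminf inequality.} Let $\pi_n \weakstar \pi$ in $\Pi(\mu,\nu)$. The regularization term is nonnegative, so $\Fcal_{\epsilon_n}(\pi_n) \geq \int c \dd\pi_n$. The marginals are compactly supported, so all $\pi_n$ are supported in the fixed compact set $\supp\mu\times\supp\nu$. On that set $c$ is continuous and bounded, and we may modify it outside to obtain an element of $C_0(\R^{2d})$ without changing any of the integrals. Hence $\int c\dd\pi_n \to \int c \dd\pi$, which gives $\liminf_n \Fcal_{\epsilon_n}(\pi_n) \geq \Fcal(\pi)$. This step uses neither \eqref{eq:nonintegrable} nor \eqref{eq:densityassumption}.

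\emph{Recovery sequence.} Fix $\pi\in\Pi(\mu,\nu)$. By \eqref{eq:densityassumption} and metrizability there is a sequence $\pi_k=\pi_{T_k}\in\Acal$ with $\pi_k \weakstar \pi$ and $\Rcal_\mu(T_k)<+\infty$ for every $k$. By the same continuity argument as above, $\Fcal(\pi_k)\to\Fcal(\pi)$. For each fixed $k$ we have $\Fcal_{\epsilon_n}(\pi_k) = \Fcal(\pi_k) + \epsilon_n \Rcal_\mu(T_k) \to \Fcal(\pi_k)$ as $n\to\infty$. We then use a standard diagonal argument. Let $d$ be a metric inducing the weak* topology on $\Pi(\mu,\nu)$, and choose $k(n)\to\infty$ slowly enough that $\epsilon_n \Rcal_\mu(T_{k(n)}) \to 0$. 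For instance, let $k(n)$ be the largest $k\leq n$ such that $\epsilon_n \max_{j\leq k}\Rcal_\mu(T_j)\leq 1/k$; this forces $k(n)\to\infty$ because each $\Rcal_\mu(T_j)$ is finite. Setting $\tilde\pi_n := \pi_{k(n)}$, we get $\tilde\pi_n\weakstar\pi$ and
\[
\limsup_n \Fcal_{\epsilon_n}(\tilde\pi_n) \leq \limsup_n \Fcal(\pi_{k(n)}) + \limsup_n \frac{1}{k(n)} = \Fcal(\pi).
\]
Alternatively, one can invoke the general fact that the $\Gamma$-limsup is weakly* lower semicontinuous and apply it to the dense set $\Acal$, on which the pointwise limit is $\Fcal$.

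\emph{Main obstacle.} There is no deep difficulty here; the essential input is the density assumption \eqref{eq:densityassumption}. The only point requiring care is the diagonal choice of $k(n)$, because the energies $\Rcal_\mu(T_k)$ may blow up as $k\to\infty$. Assumption \eqref{eq:nonintegrable} enters only to identify $\Fcal_\epsilon$ with the Monge form through Proposition~\ref{prop:kantorovichismonge}, and the argument is unaffected if $\Fcal_\epsilon$ is instead used in its original plan form \eqref{eq:regularizedOT}.
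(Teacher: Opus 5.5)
Your proof is correct and follows the same route as the paper. The liminf inequality comes from nonnegativity of the regularizer together with weak* continuity of $\pi\mapsto\int c\dd\pi$ on plans with fixed compactly supported marginals, and the recovery sequence is obtained by diagonalizing a dense sequence from $\Acal$ so that $\epsilon\Rcal_\mu(T_{n_\epsilon})\to 0$; your explicit choice of $k(n)$ simply makes precise the diagonal selection that the paper leaves implicit.
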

\begin{proof}
    \textit{Liminf-inequality:} Let $(\pi_\epsilon)_\epsilon \subset \Pi(\mu,\nu)$ be a sequence weakly* converging to $\pi \in \Pi(\mu,\nu)$. Since the regularizer is nonnegative, we immediately find that
    \begin{align*}
        \liminf_{\epsilon \to 0} \Fcal_{\epsilon}(\pi_\epsilon) \geq \liminf_{\epsilon \to 0} \int_{\R^d \times \R^d} c(x,y)\dd\pi_\epsilon(x,y)=\int_{\R^d \times \R^d} c(x,y)\dd\pi(x,y)=\Fcal(\pi).
    \end{align*}

    \textit{Limsup-inequality:} Let $\pi \in \Pi(\mu,\nu)$ and take a sequence $(\pi_n)_n \subset \Acal$ such that $\pi_n \weakstar \pi$ in $\Pi(\mu,\nu)$, which is possible by \eqref{eq:densityassumption}. If $(T_n)_n$ are such that $\pi_n=\pi_{T_n}$ for $n \in \N$, then we can choose for each $\epsilon>0$ a $n_\epsilon \in \N$ such that $n_\epsilon \to \infty$ as $\epsilon \to 0$ and
    \[
    \lim_{\epsilon \to 0} \epsilon\Rcal_\mu(T_{n_\epsilon}) =0.
    \]
    From this, we obtain that
    \begin{equation*}
        \lim_{\epsilon \to 0} \Fcal_\epsilon(\pi_{n_\epsilon})=\lim_{\epsilon \to 0}\int_{\R^d\times\R^d}c(x,y)\dd\pi_{n_\epsilon}(x,y)+\epsilon\Rcal_\mu(T_{n_\epsilon}) = \int_{\R^d\times\R^d}c(x,y)\dd\pi(x,y)=\Fcal(\pi). \qedhere
    \end{equation*}
\end{proof}
\begin{rem}\label{rem:closureA}
    If the condition \eqref{eq:densityassumption} is not satisfied, an almost identical argument shows that the sequence $(\Fcal_\epsilon)_\epsilon$ $\Gamma$-converges to the functional that is equal to $\Fcal$ on the weak* closure of $\Acal$, while being infinite outside.
\end{rem}
There is the following sufficient condition for \eqref{eq:densityassumption} in the case of absolutely continuous measures as a direct consequence of \cite[Theorem~1.1]{DePLouSan16}, in the setting of polar Lipschitz domains as defined in \cite[Definition~3.1]{DePLouSan16}. However, we mention that also measures that are not absolutely continuous can be tackled by our framework, see~Example~\ref{ex:twolinesrevisited}.
\begin{cor}
Let $\Omega,\Omega'\subset \R^d$ be two polar Lipschitz domains, and suppose that $\mu,\nu$ are absolutely continuous with densities $f \in C^{0,\theta}(\overline{\Omega})$ and $g \in C^{0,\theta}(\overline{\Omega'})$ for some $\theta>0$ that are bounded from below. Additionally, suppose that $\omega$ satisfies
\begin{equation}\label{eq:lipschitzfinite}
    \int_{\Omega}\int_{\Omega} \omega(x,x')|x-x'|^q\dd x \dd x' <+\infty.
\end{equation}
Then, \eqref{eq:densityassumption} is satisfied.
\end{cor}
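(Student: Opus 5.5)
The plan is to invoke \cite[Theorem~1.1]{DePLouSan16}. For polar Lipschitz domains $\Omega,\Omega'$ and Hölder densities bounded from below, that theorem shows that the set of plans induced by Lipschitz (indeed bi-Lipschitz, or at least $W^{1,p}$-regular) transport maps $T$ with $T_\#\mu=\nu$ is weakly* dense in $\Pi(\mu,\nu)$. It is the density of regular maps used there to prove $\Gamma$-convergence of gradient-regularized problems. It therefore suffices to show that every such Lipschitz transport map has finite nonlocal energy, since then these plans all lie in $\Acal$ and \eqref{eq:densityassumption} follows.

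Fix such a $T$ with Lipschitz constant $L$ on $\Omega$. Because $\mu=f\,\Lcal^d\mres\Omega$ and $f$ is continuous on the compact set $\overline{\Omega}$, $f$ is bounded above by $\|f\|_\infty$. We then estimate
\[
\Rcal_\mu(T)=\int_\Omega\int_\Omega \omega(x,x')|T(x)-T(x')|^q f(x)f(x')\dd x\dd x' \leq L^q\|f\|_\infty^2\int_\Omega\int_\Omega\omega(x,x')|x-x'|^q\dd x\dd x',
\]
which is finite by \eqref{eq:lipschitzfinite}. Hence $\pi_T\in\Acal$, and density of such $\pi_T$ in $\Pi(\mu,\nu)$ gives the claim.

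The main obstacle is matching the statement of \cite[Theorem~1.1]{DePLouSan16} to what is needed here. If that theorem only provides maps in a Sobolev class $W^{1,p}$ rather than Lipschitz maps, the pointwise bound $|T(x)-T(x')|\leq L|x-x'|$ is unavailable and the estimate above must be replaced. In that case the approach is to check whether the maps constructed there are in fact Lipschitz, or at least locally Lipschitz with uniform constants, since they are built by composing regular diffeomorphisms and Hölder-regular Dacorogna--Moser type maps on polar domains. That is exactly where the Hölder continuity of the densities and the polar Lipschitz structure enter. If only the Sobolev conclusion is available, a fallback is to use difference-quotient bounds $\|T(\cdot+h)-T\|_{L^q}\leq C|h|$ combined with an assumption on $\omega$ depending only on $x-x'$. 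Under the stated hypothesis \eqref{eq:lipschitzfinite}, however, the Lipschitz route is the intended one.
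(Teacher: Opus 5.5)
Your argument is essentially the paper's. The bound $\Rcal_\mu(T)\leq L^q\|f\|_\infty^2\int_\Omega\int_\Omega\omega(x,x')|x-x'|^q\dd x\dd x'$ is exactly the ``follows readily'' step showing that plans induced by Lipschitz transport maps lie in $\Acal$, and the density is then taken from \cite[Theorem~1.1]{DePLouSan16}.

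The one link you skip is the passage from maps to plans. The paper does not obtain weak* density in all of $\Pi(\mu,\nu)$ from \cite[Theorem~1.1]{DePLouSan16} alone. It explicitly combines that result, which approximates transport maps by Lipschitz transport maps, with \cite[Theorem~1.32]{San15}. The latter states that the plans $\pi_S$ induced by maps $S\in\Tcal(\mu,\nu)$ are weakly* dense in $\Pi(\mu,\nu)$ whenever $\mu$ is atomless, which holds here since $\mu\ll\Lcal^d$. With both results in hand, the conclusion is transitivity of closures, using that $T_n\to S$ in $L^p(\R^d,\mu)$ implies $\pi_{T_n}\weakstar\pi_S$.

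Your ``main obstacle'' paragraph, about Lipschitz versus Sobolev regularity, is not where the subtlety lies, since the approximating maps used are Lipschitz. The missing piece is the atomless density of Monge plans among Kantorovich plans, which you attribute to \cite[Theorem~1.1]{DePLouSan16} without citing it.
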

\begin{proof}
    Given \eqref{eq:lipschitzfinite} and the fact that $\mu$ is absolutely continuous, it follows readily that for Lipschitz maps $T \in \Tcal(\mu,\nu)$ it holds that  $\pi_T \in \Acal$. Hence, the result is an immediate consequence of the combined density results in \cite[Theorem~1.1]{DePLouSan16} and \cite[Theorem~1.32]{San15}.
\end{proof}

We can also perform a first order $\Gamma$-limit, which provides additional information about the limits of minimizers of the sequence $(\Fcal_\epsilon)_\epsilon$. It consists of studying the $\Gamma$-limit of the sequence
\[
\Fcal^{(1)}_\epsilon:=\frac{\Fcal_{\epsilon}-\min\Fcal}{\epsilon} \quad \text{for $\epsilon>0$.}
\]

\begin{prop}\label{prop:secondordergamma}
Assume \eqref{eq:nonintegrable} and \eqref{eq:densityassumption}. 
Then, the sequence $(\Fcal^{(1)}_\epsilon)_\epsilon$ $\Gamma$-converges as $\epsilon \to 0$ with respect to the weak* topology in $\Pi(\mu,\nu)$ to the functional $\Fcal^{(1)}:\Pi(\mu,\nu) \to [0,+\infty]$ given by
\[
\Fcal^{(1)}(\pi):=\begin{cases}
    \Rcal_\mu(T) &\text{if $\pi \in \argmin \Fcal$ and $\pi=\pi_T$ for $T \in \Tcal(\mu,\nu)$},\\
    +\infty &\text{else}.
    \end{cases}
\]
\end{prop}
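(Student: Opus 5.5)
We verify the two $\Gamma$-convergence inequalities directly, using Proposition~\ref{prop:kantorovichismonge} to identify finite-energy plans with maps, and the lower semicontinuity of the regularizer from the proof of Proposition~\ref{prop:existence}.

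\emph{Liminf-inequality.} Let $\pi_\epsilon \weakstar \pi$. We may assume $\liminf_\epsilon \Fcal^{(1)}_\epsilon(\pi_\epsilon)=:\Lambda<+\infty$ and pass to a subsequence realizing the liminf with $\Fcal^{(1)}_\epsilon(\pi_\epsilon)\leq \Lambda+1$.

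First, $\pi$ is a Kantorovich minimizer. Indeed,
\[
\Fcal(\pi_\epsilon)-\min\Fcal\leq \Fcal_\epsilon(\pi_\epsilon)-\min\Fcal\leq \epsilon(\Lambda+1)\to 0,
\]
and continuity of $\pi\mapsto\int c\dd\pi$ under weak* convergence (since $c$ is continuous and the supports are compact) gives $\Fcal(\pi)=\min\Fcal$.

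Second, $\pi$ is induced by a map with controlled energy. Since $\Fcal(\pi_\epsilon)\geq\min\Fcal$, we get
\[
\Fcal^{(1)}_\epsilon(\pi_\epsilon)\geq \int_{\R^{2d}}\int_{\R^{2d}}\omega(x,x')|y-y'|^q\dd\pi_\epsilon\dd\pi_\epsilon .
\]
The lower semicontinuity of this quadratic term along $\pi_\epsilon\otimes\pi_\epsilon\weakstar\pi\otimes\pi$ (proof of Proposition~\ref{prop:existence}) shows that the interaction energy of $\pi$ is at most $\Lambda<+\infty$. Proposition~\ref{prop:kantorovichismonge} then gives $\pi=\pi_T$ with $\Rcal_\mu(T)\leq\Lambda$, which is the liminf inequality.

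\emph{Limsup-inequality.} Take $\pi$ with $\Fcal^{(1)}(\pi)<+\infty$; otherwise there is nothing to prove. Then $\pi=\pi_T$ is a Kantorovich minimizer with $\Rcal_\mu(T)<+\infty$, and we use the constant recovery sequence $\pi_\epsilon:=\pi_T$. This gives
\[
\Fcal^{(1)}_\epsilon(\pi_T)=\frac{\Fcal(\pi_T)+\epsilon\Rcal_\mu(T)-\min\Fcal}{\epsilon}=\Rcal_\mu(T)
\]
for every $\epsilon$.

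All of this is routine once the right earlier results are invoked. The only delicate point is the liminf step: the lower bound $\Fcal(\pi_\epsilon)\geq\min\Fcal$ is what allows us to discard the transport term after rescaling by $1/\epsilon$. The quadratic regularizer's lower semicontinuity must be applied to $\pi_\epsilon\otimes\pi_\epsilon$, not to maps; this sidesteps any compactness question on the $T_\epsilon$, although Corollary~\ref{cor:compactness} would give $L^2(\mu)$ convergence as a byproduct. Assumption \eqref{eq:densityassumption} is not actually needed for this first-order limit. It only guarantees the zeroth-order statement, and in particular that $\min\Fcal$ is the correct normalizing constant, i.e.\ that $\inf\Fcal_\epsilon\to\min\Fcal$.
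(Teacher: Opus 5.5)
Your proof is correct and follows the same structure as the paper's. A uniform bound on the rescaled energy forces the limit to be a Kantorovich minimizer induced by a map via Proposition~\ref{prop:kantorovichismonge}, lower semicontinuity gives the liminf inequality, and the constant sequence is the recovery sequence. The differences are small streamlinings: you apply lower semicontinuity of the quadratic term directly to $\pi_\epsilon\otimes\pi_\epsilon$ instead of going through Corollary~\ref{cor:compactness} and Fatou's lemma. You also get minimality of the limit from weak* continuity of $\pi\mapsto\int c\dd\pi$ rather than from the zeroth-order $\Gamma$-limit, and this is why your remark that \eqref{eq:densityassumption} is not needed for this statement is correct.
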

\begin{proof}
    \textit{Liminf-inequality:} Let $(\pi_\epsilon)_\epsilon \subset \Pi(\mu,\nu)$ be a sequence weak* converging to $\pi \in \Pi(\mu,\nu)$ and assume without loss of generality that
    \[
    \liminf_{\epsilon \to 0} \Fcal^{(1)}_\epsilon(\pi_\epsilon)=\lim_{\epsilon \to 0} \Fcal^{(1)}_\epsilon(\pi_\epsilon) <+\infty.
    \]
    Then, in particular, we find that $\sup_\epsilon \Rcal_\mu(T_\epsilon)<+\infty$ where $\pi_\epsilon=\pi_{T_\epsilon}$ for $\epsilon>0$. Hence, by Corollary~\ref{cor:compactness}, up to a non-relabeled subsequence, we find that $T_\epsilon \to T$ in $L^p(\R^d,\mu)$. This shows that $\pi=\pi_T$ and by Fatou's lemma
    \[
    \lim_{\epsilon \to 0} \Fcal^{(1)}_\epsilon(\pi_\epsilon) \geq \liminf_{\epsilon \to 0} \Rcal_\mu(T_\epsilon) \geq \Rcal_\mu(T).
    \]
    Additionally, since $\lim_{\epsilon \to 0}\Fcal(\pi_\epsilon)-\Fcal(\pi) \leq \lim_{\epsilon\to 0}C\epsilon=0$, it is a standard consequence of the $\Gamma$-convergence of $(\Fcal_\epsilon)_\epsilon$ to $\Fcal$ and the compactness of the space $\Pi(\mu,\nu)$ that $\pi$ must be a minimizer of $\Fcal$.

    \textit{Recovery sequence:} It follows immediately that the constant sequence constitutes a recovery sequence.
\end{proof}
\begin{rem}\label{rem:secondorder}
    Assume first that $\Fcal^{(1)}$ is not identically infinity, that is, there exists a minimizer of the Kantorovich problem that is given by a transport map $T$ that satisfies $\Rcal_\mu(T) <+\infty$. Then, the properties of higher order $\Gamma$-limits yield that
    \[
    \min \Fcal_\epsilon = \min \Fcal + \epsilon \min \Fcal^{(1)} + o(\epsilon).
    \]
    Moreover, any sequence of minimizers of $(\Fcal_\epsilon)_\epsilon$ must converge up to subsequence to a minimizer of $\Fcal^{(1)}$. In particular, the limits of minimizing sequences of $(\Fcal_\epsilon)_\epsilon$ converge only to optimal transport maps, specifically those that minimize $\Rcal_\mu$.

    If, on the other hand, we have that $\Fcal^{(1)} \equiv +\infty$, then the first order $\Gamma$-limit implies that
    \[
    \lim_{\epsilon \to 0} \frac{\min\Fcal_\epsilon-\min\Fcal}{\epsilon}=+\infty,
    \]
    but it provides us with no additional information on the limits of minimizers. Hence, we only know that minimizing sequences of $(\Fcal_\epsilon)_\epsilon$ converge up to subsequence to minimizers of $\Fcal$, which may not be given by transport maps. 
\end{rem}
We return to Example~\ref{ex:twolines}, to see how the results apply in this case.
\begin{example}\label{ex:twolinesrevisited}
    Recall the setting of Example~\ref{ex:twolines} with the functional
    \[
    \Gcal_\epsilon(T):= \int_{\R^2} |x-T(x)|^2\dd \mu(x)+\epsilon \int_{\R^2}\int_{\R^2} \frac{|T(x)-T(x')|^q}{|x-x'|^{1+\alpha q}}\dd \mu(x) \dd \mu(x'),
    \]
    and $\alpha q<1$. It is readily seen that the optimal transport plan for $\epsilon=0$ given by $\pi=\frac{1}{2}(\pi_{T^+}+\pi_{T^-})$ with $T^{\pm}(x):=x\pm (1,0)$ lies in the weak closure of $\Acal$. Indeed, we may approximate it by the sequence
    \[
    T_N(x) = \begin{cases}
        (x_1+1,2x_2-\frac{j}{N}) &\text{if $x_2 \in [\frac{j}{N},\frac{j}{N}+\frac{1}{2N})$},\\
        (x_1-1,2x_2-\frac{j+1}{N}) &\text{if $x_2 \in [\frac{j}{N}+\frac{1}{2N},\frac{j+1}{N})$},
    \end{cases} \quad \text{for $N \in \N$,}
    \] 
    which satisfies $\Gcal_\epsilon(T_N) <+\infty$ for any $N \in \N$ by arguing as in Example~\ref{ex:twolines}. Hence, we find by Proposition~\ref{prop:firstordergamma} and Remark~\ref{rem:closureA}, that the minimizers and almost minimizers of $(\Fcal_\epsilon)_\epsilon$ converge as $\epsilon \to 0$, up to subsequence, to the unique optimal transport plan $\pi$.

    It is also clear from Remark~\ref{rem:secondorder} that $\Fcal^{(1)}\equiv +\infty$, since otherwise the limiting problem would admit an optimal transport map. In fact, it turns out the scaling of the energy in this case is given by
    \begin{equation}\label{eq:scaling}
        c\epsilon^{\frac{2}{2+q\alpha}} \leq \min \Fcal_\epsilon - \min \Fcal \leq C \epsilon^{\frac{2}{2+q\alpha}}.
    \end{equation}
    Indeed, to prove the upper bound, we use the sequence $(T_N)_N$ and find
    \[
    \int_{\R^2} |x-T_N(x)|^2\dd \mu(x)-\Fcal(\pi) =\int_{\R^2} |x_2-T_N(x)_2|^2\dd \mu(x) =\frac{1}{4N^2}.
    \]
    For the regularizer, we define $A_{N}:=\{0\} \times \bigcup_{j=0}^{N-1}[\frac{j}{N},\frac{j}{N}+\frac{1}{2N})$ and compute
    \begin{align*}
    \int_{\R^2}\int_{\R^2} \frac{|T_N(x)-T_N(x')|^q}{|x-x'|^{1+q\alpha }}\dd \mu(x) \dd \mu(x') &\leq  C\int_{A_N}\int_{\R^2 \setminus A_N} \frac{1}{|x-x'|^{1+q\alpha}}\dd \mu(x) \dd \mu(x') \\
    &\qquad
    +  C\int_{\R^2}\int_{\R^2} \frac{|T_N(x)_2-T_N(x')_2|^q}{|x-x'|^{1+q\alpha }}\dd \mu(x) \dd \mu(x').
    \end{align*}
    To estimate the first term, we compute
    \begin{align*}
        \int_{A_N}\int_{\R^2 \setminus A_N} \frac{1}{|x-x'|^{1+q\alpha}}\dd \mu(x) \dd \mu(x') &\leq \sum_{j=0}^{N-1} \int_{\frac{j}{N}}^{\frac{j}{N}+\frac{1}{2N}} \int_{\dist(t,\{\frac{j}{N},\frac{j}{N}+\frac{1}{2N}\})}^{\infty} \frac{2}{s^{1+q\alpha}}\dd s \dd t \\
        & =\frac{2}{q\alpha}\sum_{j=0}^{N-1} \int_{\frac{j}{N}}^{\frac{j}{N}+\frac{1}{2N}} \frac{1}{\dist(t,\{\frac{j}{N},\frac{j}{N}+\frac{1}{2N}\})^{q\alpha}} \dd t \\
        &=\frac{4N}{q\alpha} \int_0^{\frac{1}{4N}} \frac{1}{t^{q\alpha}}\dd t = \frac{4^{q\alpha}N^{q\alpha}}{q\alpha(1-q\alpha)}.
    \end{align*}
    For the second, we find
        \begin{align*}
        \int_{\R^2}\int_{\R^2} \frac{|T_N(x)_2-T_N(x')_2|^q}{|x-x'|^{1+q\alpha }}\dd \mu(x) \dd \mu(x') &\leq \sum_{j=0}^{N-1} \int_{\frac{j}{N}}^{\frac{j}{N}+\frac{1}{2N}}\int_{\frac{j}{N}}^{\frac{j}{N}+\frac{1}{2N}} 2^q|s-t|^{1-q\alpha}\dd s \dd t \\
        &\qquad+ \sum_{j=0}^{N-1} \int_{\frac{j}{N}+\frac{1}{2N}}^{\frac{j+1}{N}}\int_{\frac{j}{N}+\frac{1}{2N}}^{\frac{j+1}{N}} 2^q|s-t|^{1-q\alpha}\dd s \dd t 
         \\
         &\qquad+2^{q+1}\sum_{j=0}^{N-1} \int_{\frac{j}{N}}^{\frac{j}{N}+\frac{1}{2N}} \int_{\dist(t,\{\frac{j}{N},\frac{j}{N}+\frac{1}{2N}\})}^{\infty} \frac{1}{s^{1+q\alpha}}\dd s \dd t\\
        &\leq CN^{q\alpha},
    \end{align*}
    where the first two terms arise by using the Lipschitz continuity of the second component of $T_N$ on each subinterval, while the last term comes from the boundedness of $T_N$. Choosing the optimal scaling $N \propto \epsilon^{-1/(2+q\alpha)}$, we find that the upper bound in \eqref{eq:scaling} holds.
    
    For the lower bound, we observe that
    \begin{align*}
    \int_{\R^2} |x-T(x)|^2\dd \mu(x)-\Fcal(\pi)&=\int_{\R^2} |x|^2+|T(x)|^2-2x_2T(x)_2 \dd \mu(x) -1 \\
    &= \frac{1}{3} +\int_{\R^2}|y|^2\dd \nu(y) -1 - \int_{\R^2} 2x_2T(x)_2 \dd \mu(x)\\
    &=\frac{2}{3}-\int_{\R^2} 2x_2T(x)_2 \dd \mu(x).
    \end{align*}
    If we define $A_{+}:=\{x_2 \in [0,1] \,:\, T(x)_1=1\}$ and $A_{-}:=[0,1] \setminus A_{+}$, then we find that
    \[
    \int_{\R^2} 2x_2T(x)_2 \dd \mu(x) \leq \int_{\R^2} 2x_2T_{A_+}(x)_2 \dd \mu(x),
    \]
    where $T_{A_+}$ is the function
    \[
    T_{A_{+}}(x) = \begin{cases}
        (x_1+1,2|A_{+} \cap [0,x_2]|) &\text{if $x_2 \in A_{+}$,}\\
        (x_1-1,2|A_{-} \cap [0,x_2]|) &\text{if $x_2 \in A_{-}$.}
    \end{cases}
    \]
    Indeed, both $T$ and $T_{A_{+}}$ map $A_{+}$ and $A_{-}$ in a measure preserving way to the right and left line, respectively, but $T_{A_{+}}$ is monotone on $A_{+}$ and $A_{-}$ separately. We infer that
    \[
    \int_{\R^2} |x-T(x)|^2\dd \mu(x)-\Fcal(\pi) \geq \int_0^1 |t-T_{A_+}(0,t)_2|^2 \dd t.
    \]
    By only considering the first component of $T$, we also find
    \[
    \int_{\R^2}\int_{\R^2} \frac{|T(x)-T(x')|^q}{|x-x'|^{1+q\alpha }}\dd \mu(x) \dd \mu(x') \geq 2^{q+1}\int_{A_+}\int_{A_-}\frac{1}{|t-s|^{1+q\alpha}}\dd s\dd t=2^{q}[\mathds{1}_{A_+}]_{W^{\alpha,q}(0,1)}^q,
    \]
    where
    \[
    [f]^q_{W^{\alpha,q}(0,1)}:=\int_{0}^1 \int_0^1 \frac{|f(s)-f(t)|^q}{|s-t|^{1+q\alpha}}\dd s \dd t.
    \]
    We now introduce the function $\psi \in W^{1,1}(0,1)$ given by
    \[
    \psi(t):= \left|A_{+} \cap[0, t]\right|-\left|A_{-} \cap[0, t]\right|= \int_0^t\left(2 \mathds{1}_{A_{+}}-1\right) \mathrm{d} r \quad \text{for $t \in (0,1)$.}
    \]
    Then, since $t= \left|A_{+} \cap[0, t]\right|+\left|A_{-} \cap[0, t]\right|$, it follows that
    \[
    T_{A_+}(0,t)_2 = t\pm \psi(t) \quad \text{for $t \in A_{\pm}$.}
    \]
    Hence, we can rewrite
    \[
    \int_0^1 |t-T_{A_+}(0,t)_2|^2 \dd t = \|\psi\|_{L^2(0,1)}^2.
    \]
On the other hand, we have that $\psi'= 2\mathds{1}_{A_+}-1$, so that
\[
2^q[\mathds{1}_{A_+}]_{W^{\alpha,q}(0,1)}^q = [\psi']_{W^{\alpha,q}(0,1)}^q. 
\]
All in all, we find that
\[
\Gcal_{\epsilon}(T) - \Fcal(\pi) \geq \|\psi\|_{L^2(0,1)}^2+\epsilon [\psi']_{W^{\alpha,q}(0,1)}^q.
\]
Now we can use a well-known interpolation inequality, see~\cite[Theorem~1 A)]{BrM18}, to find that
\[
1 = \|\psi'\|_{L^p(0,1)} \leq C\|\psi\|_{L^2(0,1)}^{\frac{\alpha}{1+\alpha}}[\psi']_{W^{\alpha,q}(0,1)}^{\frac{1}{1+\alpha}} \quad \text{with $p=q\frac{2+2\alpha}{2+q\alpha}$.}
\]
This also uses that $\psi(0)=0$ so that a Poincar\'e inequality yields $\|\psi\|_{W^{1+\alpha,q}(0,1)} \leq C[\psi']_{W^{\alpha,q}(0,1)}$, cf.~\cite[Lemma~1]{Brm19}. We infer that
\[
[\psi']_{W^{\alpha,q}(0,1)}^{q} \geq C\|\psi\|_{L^2(0,1)}^{-q\alpha},
\]
and thus
\[
\Gcal_{\epsilon}(T) - \Fcal(\pi) \geq r^2+C\epsilon r^{-q\alpha} \quad \text{with $r:=\|\psi\|_{L^2(0,1)}$.}
\]
Optimizing over $r > 0$ yields the lower bound in \eqref{eq:scaling} and finishes the proof.
\end{example}

\section{Dual formulation}\label{sec:dual}
In this section we derive a dual formulation for the nonlocal regularization of optimal transport, which relies on the use of a generalization of completely positive matrices to measures.

We introduce the notation 
\[
\tilde{c}(x,y,x',y'):=\frac{1}{2}\left(c(x,y)+c(x',y')\right)+\epsilon\omega(x,x')\abs{y-y'}^q,
\]
and can reformulate the minimization of \eqref{eq:regularizedOT} as 
\[
\inf_{\pi \in \Pi(\mu,\nu)} \int_{\R^{4d}}\tilde{c}(x,y,x',y')\dd (\pi \otimes \pi)(x,y,x',y'),
\]
which essentially is a standard optimal transport problem between $\mu \otimes \mu$ and $\nu \otimes \nu$ with cost $\tilde{c}$ but restricted to the class of product measures
\[
\Pi:=\{ \pi \otimes \pi \,:\, \pi \in \Pi(\mu,\nu)\};
\]
this is indeed a subspace of $\Pi(\mu \otimes \mu,\nu \otimes \nu)$ after swapping the $x'$ and $y$ variable. So, our problem of interest is
\[
\inf_{\sigma \in \Pi}\int_{\R^{4d}} \tilde{c}\dd \sigma,
\]
which is a linear problem in $\sigma$. 
However, the set $\Pi$ is not convex, so in order to study its dual, we relax the problem to the closed convex hull of $\Pi$, denoted by 
\begin{equation}\label{eq:Sigma}
    \Sigma:=\overline{\co \Pi};
\end{equation} 
the relevant topology is the weak* topology on $\Pi(\mu \otimes \mu,\nu \otimes \nu)$. Next, we want to argue that the infimum of our minimization problem does not change as we relax from $\Pi$ to $\Sigma$. This is not immediate since 
\[
\Pi(\mu \otimes \mu,\nu \otimes \nu) \ni \sigma \mapsto \int_{\R^{4d}} \tilde{c} \dd \sigma,
\]
is not weak* continuous given that $\tilde{c}$ need not be a continuous bounded function.

\begin{prop}[Convex relaxation]\label{prop:relaxation}
Suppose that
\begin{equation}\label{eq:caratheodoryrequirement}
\mu \otimes \mu \left(\left\{(x,x') \in \R^{2d} \,:\, \omega(x,x')=+\infty\right\}\right)=0,
\end{equation}
then, it holds that
\begin{equation}
\inf_{\sigma \in \Pi}\int_{\R^{4d}} \tilde{c}\dd \sigma
= \inf_{\sigma \in \Sigma}\int_{\R^{4d}} \tilde{c}\dd \sigma.
\end{equation}
\end{prop}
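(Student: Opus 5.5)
Since $\Pi\subset\Sigma$, the inequality $\inf_{\Sigma}\le\inf_{\Pi}$ is immediate, so the work is the reverse inequality $\int\tilde c\,\dd\sigma\ge\inf_{\Pi}\int\tilde c$ for every $\sigma\in\Sigma$. Lower semicontinuity of $\sigma\mapsto\int\tilde c\,\dd\sigma$ does not help here, since it points the wrong way along approximating convex combinations. Instead I will represent every $\sigma\in\Sigma$ as a mixture $\sigma=\int\pi\otimes\pi\,\dd P(\pi)$ over a probability $P$ on $\Pi(\mu,\nu)$, and then integrate the inequality $\Fcal_\epsilon(\pi)\ge\inf\Fcal_\epsilon$ against $P$.

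\textbf{Step 1 (Choquet-type representation).} The map $\Phi:\Pi(\mu,\nu)\to\Pi(\mu\otimes\mu,\nu\otimes\nu)$, $\pi\mapsto\pi\otimes\pi$, is weak* continuous by \cite[Theorem 2.8]{Bil99}. Hence $\Pi=\Phi(\Pi(\mu,\nu))$ is a compact metrizable subset of the locally convex space of measures with the weak* topology. The barycenter map $\beta:\Pcal(\Pi(\mu,\nu))\to\Pi(\mu\otimes\mu,\nu\otimes\nu)$ is defined by
\[
\int g\,\dd\beta(P):=\int\!\!\int g\,\dd(\pi\otimes\pi)\,\dd P(\pi)
\]
for $g$ continuous and bounded. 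It is affine and weak* continuous, and $\Pcal(\Pi(\mu,\nu))$ is compact and convex. Therefore its image is a compact convex set containing $\co\Pi$ (take finitely supported $P$), so it contains $\Sigma$. Thus every $\sigma\in\Sigma$ equals $\beta(P)$ for some $P$.

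\textbf{Step 2 (extending the barycenter identity to $\tilde c$).} The identity $\int g\,\dd\sigma=\int\!\int g\,\dd(\pi\otimes\pi)\,\dd P$ extends from continuous bounded $g$ to all bounded Borel $g$ by a monotone class argument. It then extends to nonnegative Borel $g$ by monotone convergence, which also shows that $\pi\mapsto\int g\,\dd(\pi\otimes\pi)$ is Borel on $\Pi(\mu,\nu)$. The subtle point, and the place where \eqref{eq:caratheodoryrequirement} enters, is the convention $\omega\,|y-y'|^q=0$ when $\omega=+\infty$ and $y=y'$, which makes $\tilde c$ awkward.

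Every measure in $\Pi(\mu\otimes\mu,\nu\otimes\nu)$ (in particular every $\sigma\in\Sigma$ and every $\pi\otimes\pi$) has $(x,x')$-marginal $\mu\otimes\mu$. So I replace $\omega$ by $\tilde\omega:=\omega\,\1_{\{\omega<+\infty\}}$. By \eqref{eq:caratheodoryrequirement}, this changes $\tilde c$ only on a set that is null for all these measures simultaneously, so neither side of the identity changes. The modified integrand is nonnegative, finite and Borel, so the previous extension applies directly. The resulting $\int\!\int\tilde c\,\dd(\pi\otimes\pi)\,\dd P$ is exactly $\int\Fcal_\epsilon(\pi)\,\dd P(\pi)$, with $\Fcal_\epsilon$ as in \eqref{eq:regularizedOT}, again because the modification is invisible to $\pi\otimes\pi$. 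In addition, the modified integrand is continuous in $(y,y')$, hence a normal integrand, so $\Fcal_\epsilon$ is even weak* lower semicontinuous, as in the proof of Proposition~\ref{prop:existence}.

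\textbf{Step 3 (conclusion).} Combining the two steps gives
\[
\int\tilde c\,\dd\sigma=\int\Fcal_\epsilon(\pi)\,\dd P(\pi)\ge\inf_{\Pi(\mu,\nu)}\Fcal_\epsilon=\inf_{\Pi}\int\tilde c,
\]
which yields the reverse inequality after taking the infimum over $\sigma\in\Sigma$.

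I expect the main obstacle to be Step 2: justifying the barycenter identity for the unbounded, possibly infinite-valued integrand $\tilde c$. This requires the measurability of $\pi\mapsto\Fcal_\epsilon(\pi)$ and the observation that the null-set modification permitted by \eqref{eq:caratheodoryrequirement} is harmless uniformly over all measures involved. Step 1 is standard convex analysis, which I would cite (e.g.\ Phelps' lectures on Choquet's theorem) rather than reprove.
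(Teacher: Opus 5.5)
Your proof is correct, but it takes a genuinely different route from the paper.

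The paper argues by truncation. For $\tilde c_R:=\min\{\tilde c,R\}$, the functional $\sigma\mapsto\int\tilde c_R\,\dd\sigma$ is weak* continuous on $\Pi(\mu\otimes\mu,\nu\otimes\nu)$. This is exactly where \eqref{eq:caratheodoryrequirement} is used: it makes $\tilde c_R$ a Carath\'eodory integrand despite the convention on $\{\omega=+\infty\}$. Hence the infima of this functional over $\Pi$ and over $\Sigma=\overline{\co\Pi}$ coincide. The paper then takes minimizers $\pi_R$ and a weak* limit $\pi\otimes\pi$, and passes $R\to\infty$ (then $R_0\to\infty$) by monotone convergence.

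You instead represent each $\sigma\in\Sigma$ as a barycenter $\int\pi\otimes\pi\,\dd P(\pi)$. This is the same Choquet-type fact the paper only invokes later, in Proposition~\ref{prop:sigmachar}, via \cite{Phe01}. You then extend the barycenter identity to nonnegative Borel integrands. This gives the pointwise identity $\int\tilde c\,\dd\sigma=\int\Fcal_\epsilon(\pi)\,\dd P(\pi)$ for every $\sigma\in\Sigma$. It needs neither continuity of truncated functionals nor existence of minimizers. As a bonus, any minimizer over $\Sigma$ with finite value is a mixture of minimizers of $\Fcal_\epsilon$.

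One point you missed: your Step~2 does not actually need \eqref{eq:caratheodoryrequirement}. The extension via $\min\{g,n\}$ and monotone convergence works verbatim for Borel $g:\R^{4d}\to[0,+\infty]$. The function $\tilde c$, with the stated convention, is such a $g$. So replacing $\omega$ by $\omega\1_{\{\omega<+\infty\}}$ is harmless but superfluous. The remark on weak* lower semicontinuity of $\Fcal_\epsilon$ is also superfluous: Step~3 only uses $\int\tilde c\,\dd(\pi\otimes\pi)=\Fcal_\epsilon(\pi)$, not attainment of the infimum.

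Your argument therefore proves the proposition without that hypothesis. The cost is the representation theorem and measurability bookkeeping on $\Pcal(\Pi(\mu,\nu))$. The paper's argument relies only on weak* continuity of truncated costs, but it genuinely needs the hypothesis.
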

\begin{proof}
The left-hand side is clearly larger than the right-hand side, since $\Pi \subset \Sigma$. For the 
reverse inequality, we use a truncation argument. For each $R>0$ with $\tilde{c}_R:=\min\{\tilde{c},R\}$, we do have that
\[
\Pi(\mu \otimes \mu,\nu \otimes \nu) \ni \sigma \mapsto \int_{\R^{4d}} \tilde{c}_R \dd \sigma
\]
is weak* continuous by \cite[Theorem 12.2.1]{AGS05} since $\tilde{c}_R$ is a Carath\'eodory integrand (with continuity in the second and fourth variable) bounded from above and below, and $\sigma$ has fixed marginal $\mu \otimes \mu$ in the first and third variable; note that we use \eqref{eq:caratheodoryrequirement} here, because $\tilde{c}(x,\cdot,x',\cdot)$ is not continuous when $\omega(x, x')=+\infty$ by our convention that $\omega(x,x')|y-y'|^q=0$ when $y=y'$. Now, let $\pi_R \in \Pi(\mu,\nu)$ for each $R>0$ be such that
\[
\int_{\R^{4d}}\tilde{c}_R\dd (\pi_R \otimes \pi_R) = \inf_{\sigma \in \Pi} \int_{\R^{4d}} \tilde{c}_R\dd \sigma =\inf_{\sigma \in \Sigma} \int_{\R^{4d}} \tilde{c}_R\dd \sigma;
\]
note that this minimizer exists since $\Pi$ is weak* closed, cf.~Proposition~\ref{prop:existence}. Note also that, up to a non-relabeled subsequence, $\pi_{R} \otimes \pi_{R} \weakstar \pi \otimes \pi \in \Pi$. Now, for a fixed $\sigma_0 \in \Sigma$ and $R_0 >0$ we have that
\begin{align*}
    \int_{\R^{4d}}\tilde{c}_{R_0} \dd(\pi \otimes \pi)&=\lim_{R \to \infty} \int_{\R^{4d}} \tilde{c}_{R_0} \dd (\pi_R \otimes \pi_R) \\
    &\leq \limsup_{R \to \infty} \int_{\R^{4d}} \tilde{c}_{R} \dd (\pi_R \otimes \pi_R) \\
    &= \limsup_{R \to \infty} \inf_{\sigma \in \Sigma} \int_{\R^{4d}} \tilde{c}_R\dd \sigma \\
    &\leq \limsup_{R \to \infty} \int_{\R^{4d}}\tilde{c}_R \dd \sigma_0 = \int_{\R^{4d}}\tilde{c} \dd \sigma_0,
\end{align*}
where the last line uses the monotone convergence theorem. If we let $R_0 \to \infty$ again via the monotone convergence theorem, we deduce that
\[
\int_{\R^{4d}}\tilde{c} \dd(\pi \otimes \pi) \leq  \int_{\R^{4d}}\tilde{c} \dd \sigma_0.
\]
Since $\pi \otimes \pi \in \Pi$ and $\sigma_0 \in \Sigma$ was arbitrary, 
the reverse inequality follows.
\end{proof}
\begin{rem}
    In most examples of interest, it holds that $\omega(x,x')=+\infty$ if and only if $x=x'$, so that \eqref{eq:caratheodoryrequirement} simply reduces to $\mu$ being atomless.
\end{rem}

It turns out we can characterize $\Sigma$ using duality with certain functions. Let $K \subset \R^{d}$ be a compact set such that $\supp \mu \,\cup\, \supp \nu \subset K$. We then define the set of copositive functions, extending the notion of copositive matrices, as
\[
{\mathrm{COP}}(K^4):=\left\{f \in C(K^4) \,:\, \int_{K^4}f \dd \pi \otimes\pi \geq 0 \ \ \text{for all $\pi \in \Pcal(K^2)$} \right\}.
\]
Then, we have the following characterization.
\begin{prop}\label{prop:sigmachar}
    For $\sigma \in \Pcal(K^4)$ it holds that $\sigma \in \Sigma$ if and only if:
    \begin{itemize}
        \item[(i)] $\displaystyle\int_{K^4} g(x,x') \dd \sigma (x,y,x',y') = \int_{K^2} g(x,x')\dd(\mu \otimes \mu)(x,x')$ for all $g \in C(K^2)$;
        \item[(ii)] $\displaystyle\int_{K^4} g(y,y') \dd \sigma (x,y,x',y') = \int_{K^2} g(y,y')\dd(\nu \otimes \nu)(y,y')$ for all $g \in C(K^2)$;
        \item[(iii)] $\displaystyle\int_{K^4} f(x,y,x',y') \dd \sigma(x,y,x',y') \geq 0$ for all $f \in {\mathrm{COP}}(K^4)$.
    \end{itemize}
\end{prop}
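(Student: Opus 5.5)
The two inclusions are handled separately, with the nontrivial direction obtained by Hahn--Banach separation in the duality between $C(K^4)$ and signed measures on $K^4$. Throughout, the weak* topology on $\Pcal(K^4)$ is the one induced by $C(K^4)$. Since $K^4$ is compact, this coincides with the topology used to define $\Sigma$.

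\emph{Necessity.} Let $\Sigma'$ be the set of $\sigma\in\Pcal(K^4)$ satisfying (i)--(iii). I first check that $\Pi\subset\Sigma'$. For $\sigma=\pi\otimes\pi$ with $\pi\in\Pi(\mu,\nu)$, the $(x,x')$-marginal is $\mu\otimes\mu$ and the $(y,y')$-marginal is $\nu\otimes\nu$, which gives (i) and (ii). Since $\pi\in\Pcal(K^2)$, condition (iii) is exactly the definition of $\mathrm{COP}(K^4)$. Each of (i)--(iii) is a family of linear equalities or inequalities in $\sigma$ against continuous test functions, so $\Sigma'$ is convex and weak* closed. Hence $\Sigma=\overline{\co\Pi}\subset\Sigma'$.

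\emph{Sufficiency.} Suppose $\sigma_0\in\Sigma'\setminus\Sigma$. The set $\Sigma$ is a weak* closed convex subset of the locally convex space $(\Mcal(K^4),\text{weak*})$, whose dual is $C(K^4)$. By Hahn--Banach separation there are $f\in C(K^4)$ and $\delta>0$ with
\[
\int f\dd\sigma_0 < \int f\dd(\pi\otimes\pi)-\delta \quad\text{for all }\pi\in\Pi(\mu,\nu).
\]
The aim is to correct $f$ into a copositive function $\tilde f$ without changing its integral against measures having the marginals in (i) and (ii). Then $\int\tilde f\dd\sigma_0<0$ contradicts (iii). The natural ansatz is
\[
\tilde f(x,y,x',y') = f(x,y,x',y') - \lambda - u(x,x') - v(y,y'),
\]
with $u,v\in C(K^2)$ and $\lambda\in\R$. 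By (i) and (ii), the integral of $\tilde f$ against $\sigma_0$ is $\int f\dd\sigma_0-\lambda-\int u\dd\mu\otimes\mu-\int v\dd\nu\otimes\nu$. One must then arrange that $\int\tilde f\dd(\pi\otimes\pi)\ge0$ for every $\pi\in\Pcal(K^2)$, not merely for those with marginals $\mu,\nu$.

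\emph{Main obstacle: extending copositivity to all of $\Pcal(K^2)$.} The inequality is only known on $\Pi(\mu,\nu)$, which sits inside $\Pcal(K^2)$ as the set where the quadratic constraints $\pi_1\otimes\pi_1=\mu\otimes\mu$ and $\pi_2\otimes\pi_2=\nu\otimes\nu$ hold. I would use a penalty. Take $u(x,x')=-M\,k(x,x')$, where $k$ is a continuous positive-definite kernel whose MMD distance metrizes weak* convergence on $\Pcal(K)$, for instance a Gaussian kernel. Then
\[
\int u\dd(\pi_1\otimes\pi_1)-\int u\dd(\mu\otimes\mu) = -M\bigl(\|\pi_1\|_k^2-\|\mu\|_k^2\bigr).
\]
This is not sign-definite, so I would instead penalize $\|\pi_1-\mu\|_k^2 = \int k\dd(\pi_1\otimes\pi_1) - 2\int k\dd(\pi_1\otimes\mu) + \|\mu\|_k^2$. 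Its cross term is linear in $\pi$. It can be written as a quadratic form by symmetrizing, using $\int\pi=1$: $\int h\dd\pi = \int\tfrac12(h(x)+h(x'))\dd(\pi\otimes\pi)$.

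With this homogenization, $Q(\pi)=\int f\dd(\pi\otimes\pi) + M\|\pi_1-\mu\|_k^2 + M\|\pi_2-\nu\|_k^2 - \lambda$ is a quadratic form in $\pi$. Its correction terms integrate against $\sigma_0$ to exactly what they give on $\Pi$, because these terms depend only on the $(x,x')$- and $(y,y')$-marginals and the total mass.

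I then show $Q\ge0$ on $\Pcal(K^2)$ for $M$ large and $\lambda=\min_{\Pi}\int f\dd(\pi\otimes\pi)-\delta/2$. The argument is by compactness. If $Q(\pi_M)<0$ along $M\to\infty$, then the marginals of $\pi_M$ converge to $\mu$ and $\nu$, so a subsequence of $\pi_M$ converges to some $\pi\in\Pi(\mu,\nu)$. Weak* continuity of $\pi\mapsto\int f\dd(\pi\otimes\pi)$ then gives $\int f\dd(\pi\otimes\pi)\le\lambda$, which contradicts the choice of $\lambda$. Finally, on $\sigma_0$ the penalty terms vanish by (i) and (ii), so $\int\tilde f\dd\sigma_0 = \int f\dd\sigma_0 - \lambda < -\delta/2 < 0$, contradicting (iii).
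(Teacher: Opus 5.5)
Your proof is correct, but it takes a different route from the paper.

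\textbf{The paper's route.} The paper uses (iii) and a Hahn--Banach/bipolar argument only to place $\sigma$ in the closed convex hull of $\{\pi\otimes\pi:\pi\in\Pcal(K^2)\}$. It then invokes a barycentric (Choquet-type) representation $\sigma=\int\pi\otimes\pi\,\dd\Lambda(\pi)$ from Phelps. Finally it uses (i), (ii) and the equality case of Jensen's inequality to show that $\Lambda$ is concentrated on $\Pi(\mu,\nu)$.

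\textbf{Your route.} You separate $\sigma_0$ from $\Sigma$ directly. You then turn the separating function into a copositive one by an exact-penalty and compactness argument.

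\textbf{The shared identity.} Both proofs rest on the fact that $\bigl(\int g\,\dd\pi_1-\int g\,\dd\mu\bigr)^2$ is the integral against $\pi\otimes\pi$ of a continuous function of $(x,x')$ alone. That function integrates to zero against any $\sigma$ satisfying (i). The paper integrates this identity against $\Lambda$, which is a zero-variance argument; it tacitly needs a countable dense family of $g$'s to conclude $\pi_1=\mu$ for $\Lambda$-a.e.\ $\pi$. You instead bundle all $g$'s into a characteristic kernel and use the result as a penalty.

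\textbf{What each route buys.} Your route avoids barycenters entirely and has a useful dual byproduct. For continuous $f$ and any $\lambda<\min_{\pi\in\Pi(\mu,\nu)}\int f\,\dd(\pi\otimes\pi)$, it produces $\varphi,\psi\in C(K^2)$ with $f-(\varphi\oplus\psi)\in\mathrm{COP}(K^4)$ and $\int\varphi\,\dd(\mu\otimes\mu)+\int\psi\,\dd(\nu\otimes\nu)=\lambda$. This gives Case~1 of Theorem~\ref{thm:duality} without the Ky Fan minimax theorem. The paper's route, in turn, yields the representation $\sigma=\int_{\Pi(\mu,\nu)}\pi\otimes\pi\,\dd\Lambda(\pi)$. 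That representation underlies the completely positive interpretation in the subsequent remark, and your argument does not provide it.

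\textbf{A minor simplification.} You do not need the MMD to metrize weak* convergence. It is enough that $\pi\mapsto\|\pi_1-\mu\|_k^2$ is weak* continuous and vanishes only at $\pi_1=\mu$.
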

\begin{proof}
    Since the three conditions are defined in terms of weak* continuous linear functionals, and each $\sigma \in \Pi$ satisfies these three conditions, it follows readily that any $\sigma \in \Sigma$ also satisfies (i)-(iii).

    For the converse, suppose that $\sigma \in \Pcal(K^4)$ satisfies (i)-(iii). The third condition, together with a corollary of the Hahn-Banach separation theorem applied to the weak* topology \cite[Corollary~IV.3.11]{Con90}, yields that 
    \[
    \sigma \in \overline{\co \{\pi \otimes\pi\,:\, \pi \in \Pcal(K^2)\}}.
    \]
    By \cite[Proposition~1.2]{Phe01}, this implies that there is a probability measure $\Lambda \in \Pcal(\Pcal(K^2))$ such that
    \begin{equation}\label{eq:convexcombination}
    \sigma = \int_{\Pcal(K^2)} \pi \otimes \pi \dd\Lambda(\pi);
    \end{equation}
    here, this integral should be interpreted in the weak sense, that is, for every $f \in C(K^4)$ it holds that
    \[
    \int_{K^4} f\dd \sigma = \int_{\Pcal(K^2)} \int_{K^{4}} f\dd(\pi \otimes \pi) \dd \Lambda(\pi).
    \]
    It remains to show that $\Lambda(\Pcal(K^2) \setminus \Pi(\mu,\nu)) =0$, as this would imply that
    \[
    \sigma = \int_{\Pi(\mu,\nu)} \pi \otimes \pi \dd\Lambda(\pi) \in \overline{\co \Pi}=\Sigma.
    \]
    Using the first condition, we find for every $g \in C(K)$ that
    \begin{align*}
        \left(\int_{K} g(x) \dd \mu \right)^2 &= \int_{K^{2}} g(x)g(x')\dd (\mu \otimes \mu)(x,x') =\int_{K^{4}} g(x)g(x')\dd \sigma(x,y,x',y') \\
        &=\int_{\Pcal(K^2)}\int_{K^{4}} g(x)g(x') \dd (\pi \otimes \pi)(x,y,x',y')\dd\Lambda(\pi) \\
        &=\int_{\Pcal(K^2)} \left( \int_{K} g(x) \dd \pi_1(x)\right)^2 \dd \Lambda(\pi) \\
        &\geq \left(\int_{\Pcal(K^2)}  \int_{K} g(x) \dd \pi_1(x) \dd \Lambda(\pi)\right)^2= \left(\int_{K} g(x) \dd \mu(x)\right)^2,
    \end{align*}
    where $\pi_1$ denotes the first marginal of $\pi$. The first line uses (i), the second \eqref{eq:convexcombination}, the final is Jensen's inequality together with (i) again. Since we have equality in Jensen's inequality, we must have that $\pi_1$ is constant, that is, $\pi_1=\mu$ for $\Lambda$-a.e.~$\pi \in \Pcal(K^2)$. In the same way, we can use (ii) for the second marginals, to deduce that $\pi \in \Pi(\mu,\nu)$ for $\Lambda$-a.e.~$\pi \in \Pcal(K^2)$. This proves $\Lambda(\Pcal(K^2) \setminus \Pi(\mu,\nu)) =0$ and finishes the proof.
\end{proof}
\begin{rem}
    As we have seen in the proof, each $\sigma \in \Sigma$ can be written as
    \[
    \sigma = \int_{\Pi(\mu,\nu)} \pi \otimes \pi \dd\Lambda(\pi).
    \]
    This essentially constitutes an infinite-dimensional generalization of so-called \emph{completely positive matrices}, that is, those matrices $A \in \R^{n \times n}$ which can be written as $A=BB^T$ for some $B$ with nonnegative entries; we are additionally working with probability measures, which would correspond to matrices whose entries sum up to 1. We note that completely positive matrices are symmetric positive semidefinite with nonnegative entries, but the converse does not hold for $n \geq 5$ (see, for example \cite{GraWil80}). Hence, the conjecture in \cite[Conjecture~$4.2.1$]{louet2014optimal}, stating that $\Sigma$ consists of positive definite measures, is not true. 

    Moreover, the cone of completely positive matrices is dual to the cone of copositive matrices, which our proof extends to measures and functions; a copositive matrix $A \in \R^{n \times n}$ satisfies $z^TAz \geq 0$ for any vector $z \in \R^n$ with nonnegative entries. Observe that copositive functions or matrices need not be symmetric, so that we do not need to separately assume that $\sigma$ is symmetric in Proposition~\ref{prop:sigmachar}.
\end{rem}

We can now obtain the main duality result of this section; in order to phrase it in a streamlined manner, we introduce unbounded copositive functions
\[
\overline{\mathrm{COP}}(K):=\left\{f:K^4 \to \R \cup \{+\infty\} \,:\, \exists g \in \mathrm{COP}(K) \ \text{such that} \ f \geq g \right\}.
\]
Moreover, we mention that condition (i) and (ii) in Proposition~\ref{prop:sigmachar} can be equivalently rephrased as
\begin{align}
(\pi_{1,3})_{\#}\sigma &= \mu \otimes \mu, \label{eq:sigma-char-i-d}\\
(\pi_{2,4})_{\#}\sigma &= \nu \otimes \nu, \label{eq:sigma-char-ii-d}
\end{align}
where $\pi_{i,j}$ denotes the projection onto the $i$th and $j$th variable.

\begin{thm}[Copositive duality]\label{thm:duality}
Let $\tilde{c}: K^4 \to [0,+\infty]$ be lower semicontinuous. Then,
\begin{align}
\begin{split}\label{eq:duality-summary}
\inf_{\sigma \in \Sigma} \int_{K^4} \tilde{c}\,\dd\sigma
= \sup\Bigl\{\int_{K^2}&\varphi\,\dd(\mu\otimes\mu) 
+ \int_{K^2}\psi\,\dd(\nu\otimes\nu) \,:\, \\
&\varphi,\psi \in C(K^2) \ \text{with} \ \tilde{c} - (\varphi\oplus\psi) \in \overline{\mathrm{COP}}(K) \Bigr\},
\end{split}
\end{align}
where $(\varphi\oplus\psi)(x,y,x',y') := \varphi(x,x') + \psi(y,y')$.
\end{thm}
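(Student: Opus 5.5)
The plan is to prove weak duality directly and then obtain strong duality by a Fenchel--Rockafellar / minimax argument on the compact convex set $\Pcal(K^4)$, first for continuous costs and then for lower semicontinuous ones by monotone approximation.

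\emph{Weak duality.} Let $\sigma\in\Sigma$ and let $\varphi,\psi\in C(K^2)$ with $\tilde c-(\varphi\oplus\psi)\geq g$ for some $g\in\mathrm{COP}(K)$. By Proposition~\ref{prop:sigmachar}, $\int g\dd\sigma\geq 0$. The marginal conditions \eqref{eq:sigma-char-i-d}--\eqref{eq:sigma-char-ii-d} give
\[
\int\tilde c\dd\sigma\geq\int(\varphi\oplus\psi)\dd\sigma+\int g\dd\sigma\geq\int\varphi\dd(\mu\otimes\mu)+\int\psi\dd(\nu\otimes\nu).
\]
This shows that the infimum is at least the supremum.

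\emph{Continuous costs.} Assume first that $\tilde c\in C(K^4)$. By Proposition~\ref{prop:sigmachar}, $\inf_\Sigma\int\tilde c\dd\sigma$ equals
\[
\inf_{\sigma\in\Pcal(K^4)}\ \sup_{\varphi,\psi\in C(K^2),\,g\in\mathrm{COP}(K)}\Bigl[\int(\tilde c-\varphi\oplus\psi-g)\dd\sigma+\int\varphi\dd(\mu\otimes\mu)+\int\psi\dd(\nu\otimes\nu)\Bigr],
\]
because the inner supremum is $+\infty$ unless conditions (i)--(iii) hold. Indeed, scaling $\varphi$, $\psi$ and $g\in\mathrm{COP}$, which is a cone, penalizes any violation. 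The Lagrangian is bilinear. It is weak* continuous in $\sigma$, the set $\Pcal(K^4)$ is weak* compact and convex, and the dual variables range over a convex set. A minimax theorem (Sion, or Ky Fan) therefore allows us to swap inf and sup. For fixed dual variables, the inner infimum over $\Pcal(K^4)$ equals $\min_{K^4}(\tilde c-\varphi\oplus\psi-g)+\int\varphi\dd(\mu\otimes\mu)+\int\psi\dd(\nu\otimes\nu)$. Call the minimum $m$. We absorb the constant $m$ into $\varphi$ by replacing $\varphi$ with $\varphi+m$. Then $\tilde c-(\varphi\oplus\psi)\geq g$, so this pair is admissible in \eqref{eq:duality-summary} and attains the same value. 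Hence strong duality holds for continuous $\tilde c$.

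\emph{Lower semicontinuous costs.} For general l.s.c.\ $\tilde c\geq0$, we write $\tilde c=\sup_k\tilde c_k$ with continuous $\tilde c_k\uparrow\tilde c$ (Baire). The dual value for $\tilde c$ dominates that for $\tilde c_k$, since $\tilde c-(\varphi\oplus\psi)\geq\tilde c_k-(\varphi\oplus\psi)\geq g$. It therefore suffices to show that $\inf_\Sigma\int\tilde c_k\dd\sigma\to\inf_\Sigma\int\tilde c\dd\sigma$. This is the standard $\Gamma$-type argument. Choose minimizers $\sigma_k\in\Sigma$; they exist because $\Sigma$ is weak* closed, being defined by the closed conditions of Proposition~\ref{prop:sigmachar}, and $\int\tilde c_k\dd\sigma$ is continuous. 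Pass to a limit $\sigma_k\weakstar\sigma\in\Sigma$. For fixed $j\leq k$ we have $\int\tilde c_j\dd\sigma=\lim_k\int\tilde c_j\dd\sigma_k\leq\liminf_k\int\tilde c_k\dd\sigma_k$. Letting $j\to\infty$ and applying monotone convergence gives $\inf_\Sigma\int\tilde c\dd\sigma\leq\lim_k\inf_\Sigma\int\tilde c_k\dd\sigma$. The reverse inequality is trivial.

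\emph{Main obstacle.} The delicate point is the minimax step. The dual variables live in a noncompact set, so we need the version of Sion's theorem that requires compactness only on the $\sigma$ side; this needs exactly weak* continuity in $\sigma$ and convexity in the dual variables, both of which hold. A further point to check is that the penalty is exact, i.e.\ that no duality gap arises from using the cone $\mathrm{COP}(K)$ of continuous copositive functions rather than its closure. This is guaranteed by the Hahn--Banach characterization already used in Proposition~\ref{prop:sigmachar}.
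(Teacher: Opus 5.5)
Your proposal is correct and follows essentially the same route as the paper. It uses the same Lagrangian with multipliers $\varphi,\psi\in C(K^2)$ and $f\in\mathrm{COP}(K^4)$ together with Proposition~\ref{prop:sigmachar}, the same Ky Fan minimax swap over the weak* compact set $\Pcal(K^4)$, the same shift of the minimum into $\varphi$, and, for lower semicontinuous $\tilde c$, the same monotone continuous approximation from below (the paper uses the Lipschitz Pasch--Hausdorff regularization) combined with minimizers in the compact set $\Sigma$ and monotone convergence.
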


\begin{proof}
We prove the result using a minimax argument, starting with the case of $\tilde{c}$ being continuous, and subsequently proving the lower semicontinuous case by approximation. \medskip

\textit{Case 1: $\tilde{c} \in C(K^4)$.} We introduce multipliers $\varphi,\psi \in C(K^2)$ and 
$f \in \mathrm{COP}(K^4)$ for the 
constraints~(i)-(iii) in Proposition~\ref{prop:sigmachar}, cf.~also \eqref{eq:sigma-char-i-d} and \eqref{eq:sigma-char-ii-d}. We define the corresponding Lagrangian by
\begin{align}\label{eq:lagrangian}
\Lcal(\sigma;\varphi,\psi,f) 
&:= \int_{K^4} \tilde{c}\,\dd\sigma 
- \int_{K^2}\varphi\,\dd\bigl((\pi_{1,3})_{\#}\sigma - \mu\otimes\mu\bigr) 
- \int_{K^2}\psi\,\dd\bigl((\pi_{2,4})_{\#}\sigma - \nu\otimes\nu\bigr) 
- \int_{K^4} f\,\dd\sigma \notag\\
&= \int_{K^4}\bigl[\tilde{c} - (\varphi\oplus\psi) - f\bigr]\,\dd\sigma 
+ \int_{K^2}\varphi\,\dd(\mu\otimes\mu) 
+ \int_{K^2}\psi\,\dd(\nu\otimes\nu).
\end{align}
For a fixed 
$\sigma \in \Pcal(K^4)$, we now have
\begin{equation}\label{eq:primal-sigma}
\sup_{\substack{\varphi,\psi \in C(K^2) \\ 
f \in \mathrm{COP}(K^4)}} 
\Lcal(\sigma;\varphi,\psi,f) 
= \begin{cases}
\displaystyle \int_{K^4} \tilde{c}\,\dd\sigma, 
& \text{if } \sigma \in \Sigma, \\
+\infty, & \text{if } \sigma \notin \Sigma.
\end{cases}
\end{equation}
Indeed, if~\eqref{eq:sigma-char-i-d} holds, then 
\[
\int_{K^2}\varphi\,\dd((\pi_{1,3})_{\#}\sigma - \mu\otimes\mu) = 0
\]
for every $\varphi \in C(K^2)$; if it fails, then 
$(\pi_{1,3})_{\#}\sigma - \mu\otimes\mu$ is a nonzero signed Radon measure on the compact set 
$K^2$, and since $C(K^2)$ separates such measures by the Riesz representation theorem, there exists $\varphi_0 \in C(K^2)$ with 
\[
\int\varphi_0\,\dd((\pi_{1,3})_{\#}\sigma- \mu\otimes\mu) \neq 0.
\]
By scaling $\varphi_0$ we find that the supremum is $+\infty$. 
The same argument applies to $\psi$ and~\eqref{eq:sigma-char-ii-d}. For the copositive constraint, since 
$\mathrm{COP}(K^4)$ is a cone containing $0$, we can argue in the same manner to obtain that
\[
\sup_{f \in \mathrm{COP}(K^4)}-\int f\,\dd\sigma = 0
\]
if~(iii) in Proposition~\ref{prop:sigmachar} holds, while it equals $+\infty$ otherwise. Thus, an application of Proposition~\ref{prop:sigmachar} 
gives~\eqref{eq:primal-sigma}. Therefore,
\begin{equation}\label{eq:primal-minimax}
\inf_{\sigma \in \Sigma} \int_{K^4} \tilde{c}\,\dd\sigma = \inf_{\sigma \in \Pcal(K^4)} 
\sup_{\substack{\varphi,\psi \in C(K^2) \\ 
f \in \mathrm{COP}(K^4)}} 
\Lcal(\sigma;\varphi,\psi,f).
\end{equation}
We now exchange the infimum and the supremum. The set $\Pcal(K^4)$ is 
convex and weak$^{*}$ compact, since $K$ is compact. The Lagrangian 
$\Lcal$ is affine in $\sigma$ and affine in $(\varphi,\psi,f)$, hence both convex and 
concave in the corresponding variables. Moreover, for each fixed $(\varphi,\psi,f)$, the 
map $\Pcal(K^4) \ni \sigma \mapsto \Lcal(\sigma;\varphi,\psi,f)$ is 
weak$^{*}$ continuous. Indeed, since $\tilde{c} \in C(K^4)$ and 
$(\varphi\oplus\psi), f \in C(K^4)$, we have 
$\tilde{c} - (\varphi\oplus\psi) - f \in C(K^4)$, and therefore 
\[
\sigma \mapsto \int_{K^4}[\tilde{c} - (\varphi\oplus\psi) - f]\,\dd\sigma
\]
is weak$^{*}$ continuous on $\Pcal(K^4)$. The remaining terms in the 
definition of $\Lcal$ are constant with respect to $\sigma$. Thus, by the Ky Fan minimax theorem 
(see e.g.~\cite[Theorem~2.10.2]{Zal02}), we get
\begin{equation}\label{eq:minimax}
\inf_{\sigma \in \Pcal(K^4)} 
\sup_{\substack{\varphi,\psi \in C(K^2) \\ 
f \in \mathrm{COP}(K^4)}} 
\Lcal(\sigma;\varphi,\psi,f) 
= \sup_{\substack{\varphi,\psi \in C(K^2) \\ 
f \in \mathrm{COP}(K^4)}} 
\inf_{\sigma \in \Pcal(K^4)} \Lcal(\sigma;\varphi,\psi,f).
\end{equation}

We next evaluate the inner infimum on the right-hand side of~\eqref{eq:minimax}. For fixed 
$(\varphi,\psi,f)$, define
\[
h := \tilde{c} - (\varphi\oplus\psi) - f \in C(K^4).
\]
Using it in the definition of $\Lcal$ in~\eqref{eq:lagrangian}, the inner infimum equals
\[
\inf_{\sigma \in \Pcal(K^4)} \int_{K^4} h\,\dd\sigma 
+ \int_{K^2}\varphi\,\dd(\mu\otimes\mu) 
+ \int_{K^2}\psi\,\dd(\nu\otimes\nu).
\]
Since $h$ is continuous on the compact set $K^4$, it attains its minimum 
at some $z_0 \in K^4$. For every $\sigma \in \Pcal(K^4)$,
we have $\int_{K^4} h\,\dd\sigma \geq \min_{K^4} h$, while equality is attained 
by the Dirac mass $\sigma = \delta_{z_0}$. Hence, it holds
\[
\inf_{\sigma \in \Pcal(K^4)} \int_{K^4} h\,\dd\sigma 
= \min_{z \in K^4} h(z),
\]
and the right-hand side of~\eqref{eq:minimax} reads
\begin{equation}\label{eq:outer-sup}
\sup_{\substack{\varphi,\psi \in C(K^2) \\ 
f \in \mathrm{COP}(K^4)}} 
\left\{\int_{K^2}\varphi\,\dd(\mu\otimes\mu) 
+ \int_{K^2}\psi\,\dd(\nu\otimes\nu) 
+ \min_{z \in K^4} h(z)\right\}.
\end{equation}
We claim that~\eqref{eq:outer-sup} coincides with the right-hand side of~\eqref{eq:duality-summary}. Indeed, since $\mu\otimes\mu$ is a probability measure, replacing $\varphi$ by $\varphi + a$ for any $a \in \R$ shifts 
$\int_{K^2}\varphi\,\dd(\mu\otimes\mu)$ by $a$ and $\min_{z \in K^4} h(z)$ by $-a$, leaving the 
functional in~\eqref{eq:outer-sup} unchanged. Hence, given any $(\varphi,\psi,f)$, setting 
$m := \min_{z \in K^4} h(z)$ and $\varphi' := \varphi + m$ yields a triple $(\varphi',\psi,f)$ with 
the same functional value and $h' = h - m \geq 0$ on $K^4$, so that $\tilde{c}-(\varphi\oplus\psi) \geq f$, which yields $\tilde{c}-(\varphi\oplus\psi) \in \mathrm{COP}(K^4)$.
%, so that 
%$(\varphi',\psi,f)$ is admissible for $D$ with functional value equal to that of 
%$(\varphi,\psi,f)$ in~\eqref{eq:outer-sup}. 
This shows that~\eqref{eq:outer-sup} is smaller or equal to the right-hand side of \eqref{eq:duality-summary}. 
Conversely, if $\tilde{c}-(\varphi\oplus\psi) \in \mathrm{COP}(K^4)$, then we may set $f=\tilde{c}-(\varphi\oplus\psi)$ to find that~\eqref{eq:outer-sup} is also larger or equal to the right-hand side of \eqref{eq:duality-summary}. This finishes the proof for the continuous case, in light of \eqref{eq:primal-minimax} and \eqref{eq:minimax}. \medskip

\textit{Case 2: $\tilde{c}$ lower semicontinuous.} If $\varphi,\psi \in C(K^2)$ are such that $\tilde{c}-\varphi \oplus \psi \in \overline{\mathrm{COP}}(K^4)$, then we find for any $\sigma \in \Sigma$
\[
0 \leq \int_{K^4} \tilde{c}-\varphi \oplus \psi \dd \sigma = \int_{K^4}\tilde{c} \dd \sigma - \int_{K^2}\varphi\,\dd(\mu\otimes\mu) 
- \int_{K^2}\psi\,\dd(\nu\otimes\nu).
\]
By taking the infimum over all $\sigma$, $\varphi$ and $\psi$, we obtain
\begin{align}
\inf_{\sigma \in \Sigma} \int_{K^4} \tilde{c}\,\dd\sigma
\geq \sup\Bigl\{\int_{K^2}&\varphi\,\dd(\mu\otimes\mu) 
+ \int_{K^2}\psi\,\dd(\nu\otimes\nu) \,:\, \\
&\varphi,\psi \in C(K^2) \ \text{with} \ \tilde{c} - (\varphi\oplus\psi) \in \overline{\mathrm{COP}}(K) \Bigr\}.
\end{align}
For the reverse inequality, we approximate $\tilde{c}$ from below by continuous costs. Since $\tilde{c}$ is lower semicontinuous and bounded from below, its Pasch-Hausdorff regularization (see e.g.~\cite[Box~1.5]{San15}) yields a nondecreasing sequence 
$(c_n)_n \subset C(K^4)$ of Lipschitz functions such that
\[
0 \leq c_n \leq \tilde{c} 
\qquad\text{and}\qquad 
c_n(z) \nearrow \tilde{c}(z) 
\quad\text{for every } z \in K^4.
\]
Since $\Sigma$ is a weak* compact set, it readily follows that there is a sequence $(\sigma_n)_n \subset \Sigma$ with
\begin{equation}\label{eq:mincn}
\inf_{\sigma \in \Sigma} \int_{K^4} c_n \dd \sigma = \int_{K^4} c_n \dd \sigma_n \quad \text{for all $n \in \N$.}
\end{equation}
Then, using that $\sigma_n \weakstar \sigma_0 \in \Sigma$ up to a non-relabeled subsequence, we obtain that
\begin{align*}
\liminf_{n \to \infty} \int_{K^4} c_n \dd \sigma_n \geq \lim_{m \to \infty} \lim_{n \to \infty} \int_{K^4} c_m \dd \sigma_n =  \lim_{m \to \infty} \int_{K^4} c_m \dd \sigma_0 = \int_{K^4} \tilde{c} \dd \sigma_0 \geq \inf_{\sigma \in \Sigma} \int_{K^4}\tilde{c}\dd \sigma.
\end{align*}
The first inequality uses that $(c_n)_n$ is increasing, while the first and second equality use that $\sigma_n \weakstar \sigma_0$ together with the monotone convergence theorem. Since $c_n$ is also continuous, we may apply the first case of this proof and \eqref{eq:mincn} to find that
\begin{align*}
\int_{K^4} c_n \dd \sigma_n = \sup\Bigl\{\int_{K^2}&\varphi\,\dd(\mu\otimes\mu) 
+ \int_{K^2}\psi\,\dd(\nu\otimes\nu) \,:\, \\
&\varphi,\psi \in C(K^2) \ \text{with} \ c_n - (\varphi\oplus\psi) \in \mathrm{COP}(K) \Bigr\}\\
\leq \sup\Bigl\{\int_{K^2}&\varphi\,\dd(\mu\otimes\mu) 
+ \int_{K^2}\psi\,\dd(\nu\otimes\nu) \,:\, \\
&\varphi,\psi \in C(K^2) \ \text{with} \ \tilde{c} - (\varphi\oplus\psi) \in \overline{\mathrm{COP}}(K) \Bigr\},
\end{align*}
which proves the other inequality.
\end{proof}

\begin{rem}[Relation with classical Kantorovich duality]\label{rem:comparison}
In the classical case $\epsilon = 0$, one has 
$\tilde{c}(x,y,x',y') = \tfrac{1}{2}\bigl(c(x,y) + c(x',y')\bigr)$, which is continuous 
as soon as $c \in C(K^2)$. Restricting in~\eqref{eq:duality-summary} 
to potentials of the form
\[
\varphi(x,x') = \tfrac{1}{2}\bigl(u(x) + u(x')\bigr), 
\qquad 
\psi(y,y') = \tfrac{1}{2}\bigl(v(y) + v(y')\bigr),
\]
with $u, v \in C(K)$, gives
\[
\int_{K^2}\varphi\,\dd(\mu\otimes\mu) = \int_{K} u\,\dd\mu, 
\qquad 
\int_{K^2}\psi\,\dd(\nu\otimes\nu) = \int_{K} v\,\dd\nu.
\]
Moreover, we have
\begin{align*}
\tilde{c} - (\varphi\oplus\psi) \in \mathrm{COP}(K^4) 
&\iff \int_{K^4}\bigl[\tilde{c} - (\varphi\oplus\psi)\bigr]\,\dd(\pi\otimes\pi) \geq 0 
\quad \text{for every } \pi \in \Pcal(K^2) \\
&\iff \int_{K^2}\bigl[c(x,y) - u(x) - v(y)\bigr]\,\dd\pi(x,y) \geq 0 
\quad \text{for every } \pi \in \Pcal(K^2),
\end{align*}
which is equivalent to the pointwise inequality $c(x,y) \geq u(x) + v(y)$ for every 
$(x,y) \in K^2$. Thus, the classical Kantorovich dual is recovered 
from~\eqref{eq:duality-summary} by restricting to potentials of this special form.
\end{rem}

\section{A fixed point scheme for numerical computations}\label{sec:numerics}
\begin{figure}[ht!]
\hfill
\begin{subfigure}[t]{.45\linewidth}
  \includegraphics[width=\linewidth]{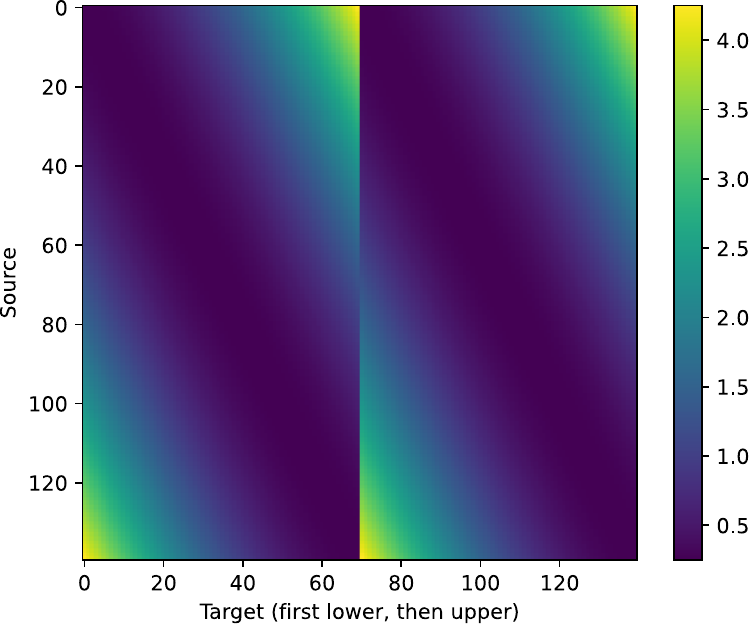}
  \caption{Squared distance cost used for $\pi_0$.}
\end{subfigure}\hfill
\begin{subfigure}[t]{.455\linewidth}
  \includegraphics[width=\linewidth]{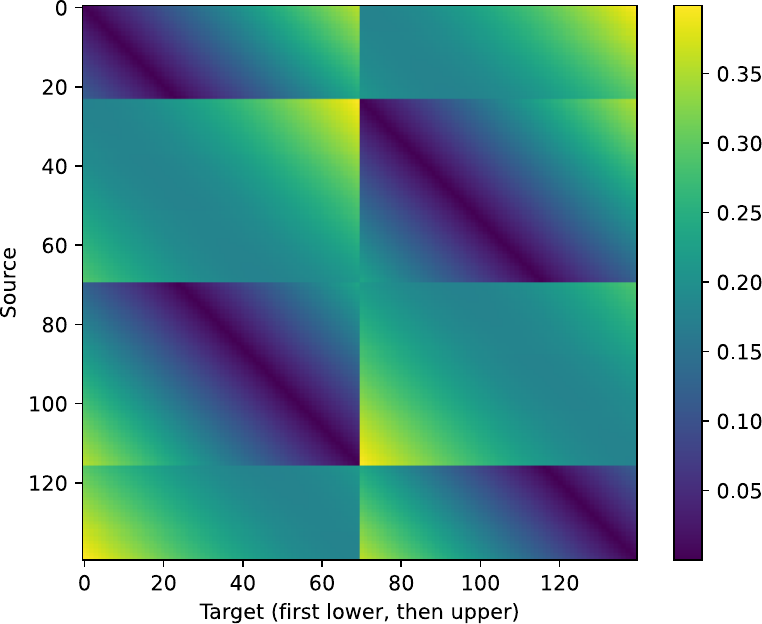}
  \caption{Nonlocal part of the cost used for $\pi_{12}$.}
\end{subfigure}\hfill

\vspace{0.015\linewidth}
\begin{subfigure}[t]{.495\linewidth}
  \raisebox{0.037\linewidth}{\includegraphics[width=\linewidth]{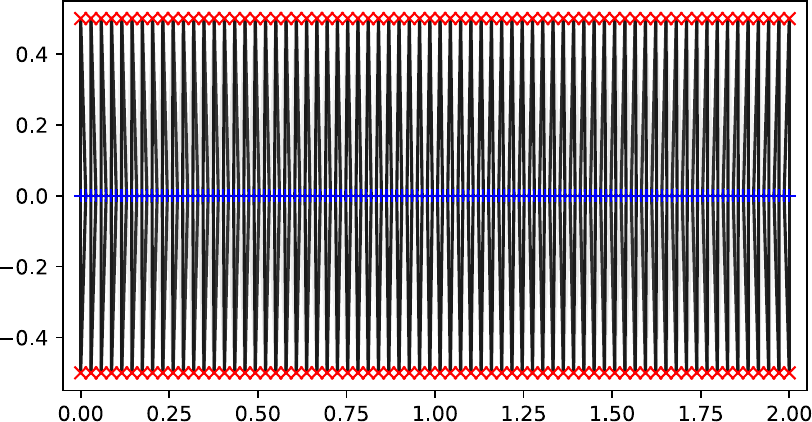}}
  \caption{Initial transport plan $\pi_0$.}
\end{subfigure}\hfill
\begin{subfigure}[t]{.495\linewidth}
  \raisebox{0.035\linewidth}{\includegraphics[width=\linewidth]{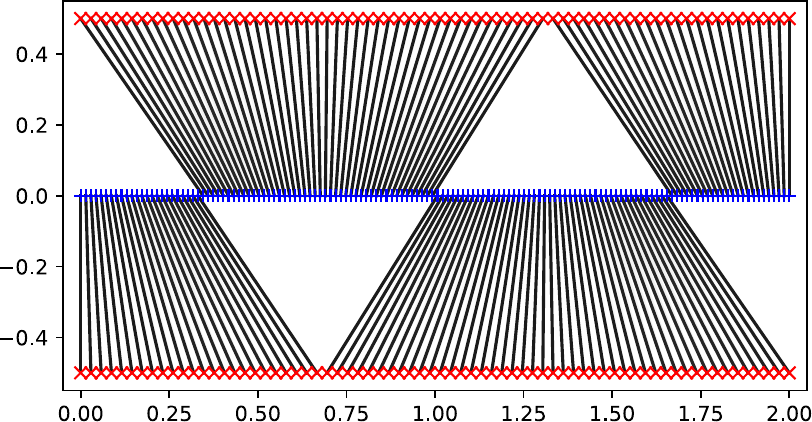}}
  \caption{Final transport plan $\pi_{12}$.}
\end{subfigure}
\caption{A numerical result with source $\mu$ uniform with unit total mass on $140$ points equispaced on $[0,2]\times\{0\}$, target $\nu$ also uniform on $140$ points equispaced on $[0,2]\times\{-0.5,0.5\}$, and parameters $\epsilon=2.5\cdot10^{-11}$, $d=2$, $s=1$, $\alpha=0.49$, $\zeta = 10^{-4}$. We used the iteration \eqref{eq:fixedpoint} with the stopping criterion $\|\dd(\pi_k-\pi_{k-1})/\dd(\mu \otimes \nu)\|_{L^1} < 1/140$, signifying a change of less than the mass of one point, and which was satisfied at $k=12$.}
\label{fig:lines}
\end{figure}

Here, we propose a method to numerically approximate the solutions of optimal transport with nonlocal regularization. Several reconstructed solutions are presented that help illustrate the theoretical findings of the paper. Throughout this section, we take $q=2$, the quadratic cost $c(x,y)=\abs{x-y}^2$ and the fractional kernel $\omega(x,x')=\abs{x-x'}^{-(s+2\alpha)}$ of Example~\ref{ex:regularizers}~b).

Our numerical method relies on the following fixed point scheme with inertia and entropic regularization:
\begin{gather}\pi_{k} \in \argmin_{\pi \in \Pi(\mu,\nu)} \int_{\R^{2d}}  \left[ |x-y|^2 + \frac{\epsilon}{2} \int_{\R^{2d}} \omega(x,x')|y-y'|^2 \dd (\pi_{k-1} + \pi_{k-2})(x',y') \right]\!\dd \pi(x,y) + \zeta H(\pi \,|\, \mu \otimes \nu),\notag\\
\pi_{-1}=0, \quad \pi_0 \in \argmin_{\pi \in \Pi(\mu,\nu)} \int_{\R^{2d}}  |x-y|^2 \dd \pi(x,y) + \zeta H(\pi \,|\, \mu \otimes \nu);\label{eq:fixedpoint}\end{gather}
here, $H$ represents the relative entropy defined by
\[
H(\pi \,|\, \mu \otimes \nu):=\int_{\R^{2d}} \log \left(\frac{\dd \pi}{\dd (\mu \otimes \nu)}\right) \dd \pi,
\]
if $\pi$ is absolutely continuous with respect to $(\mu \otimes \nu)$, and $H(\pi \,|\, \mu \otimes \nu)= +\infty$ otherwise. In the case $\zeta=0$, by Proposition \ref{prop:kantorovichismonge} a fixed point of this scheme, i.e., $\pi_k=\pi_{k-1}=\pi_{k-2}$, must be induced by a transport map. However, experimentally we found that the diffuse transport plans enforced by the entropy regularization help drive the scheme to convergence, and in all our numerical results we have needed non-negligible values of $\zeta$. Effectively, we are trying to perform a minimization problem over four variables using only two at a time, so completely sparse transport plans can lead to instability. 

Given the nonconvexity of the original problem, a fixed point of \eqref{eq:fixedpoint} need not be a global minimizer, and particularly for large values of $\epsilon$ we did not always observe numerical convergence. Nonetheless, using this scheme we have been able to produce a range of numerical results illustrating the effect of the nonlocal regularization, all of them using discrete measures which can be seen as sampled from measures with one-dimensional support in the plane.

\begin{figure}[t]
\begin{minipage}{0.45\linewidth}
\begin{subfigure}[t]{\linewidth}
  \raisebox{-0.39\linewidth}{\includegraphics[width=\linewidth]{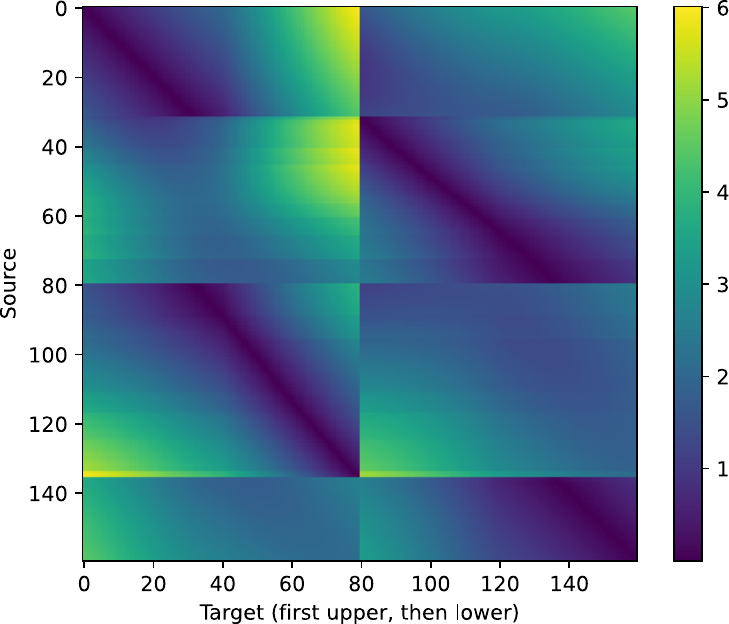}}
  \caption{Nonlocal part of the cost used for $\pi_{14}$.}
\end{subfigure}
\end{minipage}
\hspace{0.01\linewidth}
\begin{minipage}{0.53\linewidth}
\begin{subfigure}[t]{\linewidth}
  \includegraphics[width=\linewidth]{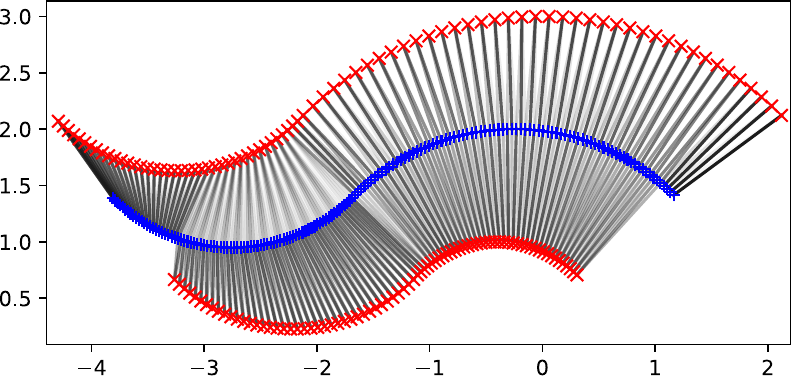}
  \caption{Initial transport plan $\pi_0$.}
\end{subfigure}

\vspace{0.03\linewidth}
\begin{subfigure}[t]{\linewidth}
  \includegraphics[width=\linewidth]{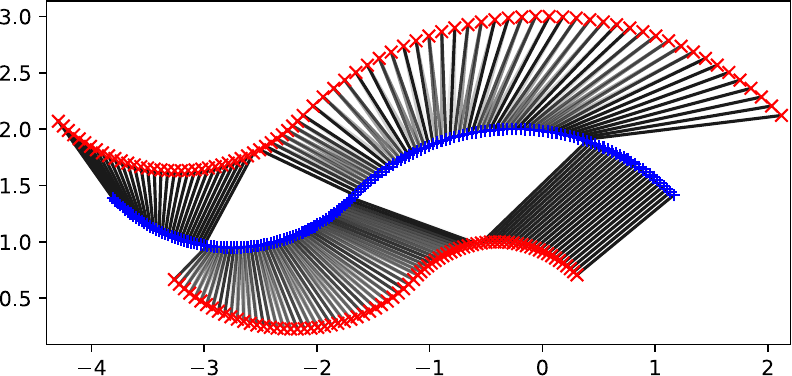}
  \caption{Final transport plan $\pi_{14}$.}
\end{subfigure}
\end{minipage}
\caption{A numerical result with geometry built out of quarter circles of radii $1,1.5,2$ and $3$, source $\mu$ and target $\nu$ both uniform on $160$ points, and parameters $\epsilon=1.5\cdot10^{-10}$, $d=2$, $s=1$, $\alpha=0.49$, $\zeta = 10^{-3}$. We used the iteration \eqref{eq:fixedpoint} with the stopping criterion $\|\dd(\pi_k-\pi_{k-1})/\dd(\mu \otimes \nu)\|_{L^1} < 1/160$, which was satisfied at $k=14$.}
\label{fig:circles}
\end{figure}

The result in Figure~\ref{fig:lines} mimics the situation of Example~\ref{ex:twolines} using equispaced points. Figure~\ref{fig:circles} shows a different geometry using circle arcs. Figure \ref{fig:morejumps} illustrates the effect of the regularization parameter $\epsilon$, and the result in Figure~\ref{fig:random} replicates the situation of Figure~\ref{fig:lines} but with randomly sampled and noisy points. In all subfigures depicting transport plans, the source $\mu$ is depicted in blue, the target $\nu$ in red, and lines are drawn with opacity according to the densities $\dd\pi_k/\dd(\mu \otimes \nu)$. All results were performed using the scheme \eqref{eq:fixedpoint}, where each iteration was computed using the Sinkhorn algorithm implementation of the ``POT: Python Optimal Transport'' package \cite{Fla21, Fla25}. To handle the singularity, the nonlocal regularization quotient was modified as
\begin{equation}\label{eq:regker}\frac{|y-y'|^2}{|x-x'|^{s+2\alpha}+b}, \quad \text{for }b=10^{-12}.\end{equation}

\begin{figure}[ht]
\begin{subfigure}[t]{.49\linewidth}
  \includegraphics[width=\linewidth]{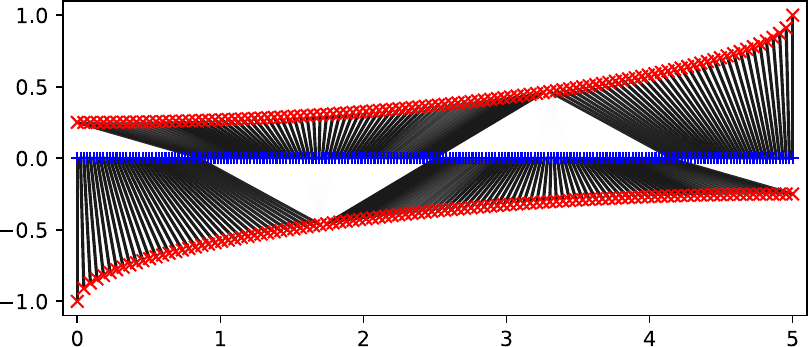}
  \caption{Final transport plan $\pi_{28}$ with $\epsilon = 1.8 \cdot 10^{-10}$.}
\end{subfigure}\hfill
\begin{subfigure}[t]{.49\linewidth}
  \includegraphics[width=\linewidth]{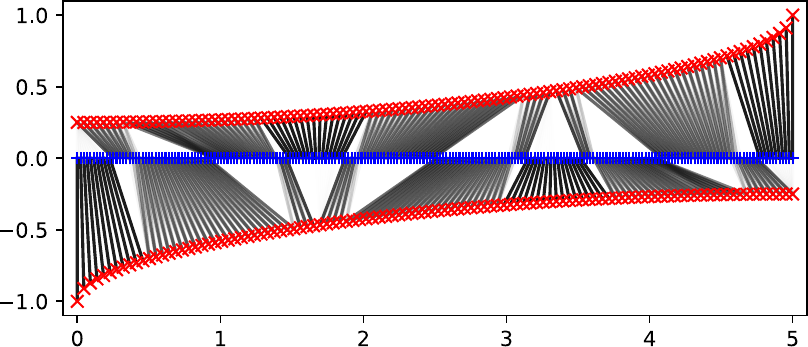}
  \caption{Final transport plan $\pi_{16}$ with $\epsilon = 10^{-10}$.}
\end{subfigure}

\caption{A numerical result with source $\mu$ and target $\nu$ both uniform on $220$ points and two different values $1.8 \cdot 10^{-10}$ and $10^{-10}$ of the regularization parameter $\epsilon$, the other parameters being $d=2$, $s=1$, $\alpha=0.49$, $\zeta = 10^{-3}$. The stopping criterion $\|\dd(\pi_k-\pi_{k-1})/\dd(\mu \otimes \nu)\|_{L^1} < 1/220$ for \eqref{eq:fixedpoint} was met after $28$ and $16$ iterations, respectively. With the lower value of $\epsilon$, the solution found still overwhelmingly assigns each point of the source to one target point, but has a larger number of ``jumps''.}
\label{fig:morejumps}
\end{figure}

\begin{figure}[ht]
\begin{subfigure}[t]{.49\linewidth}
  \includegraphics[width=\linewidth]{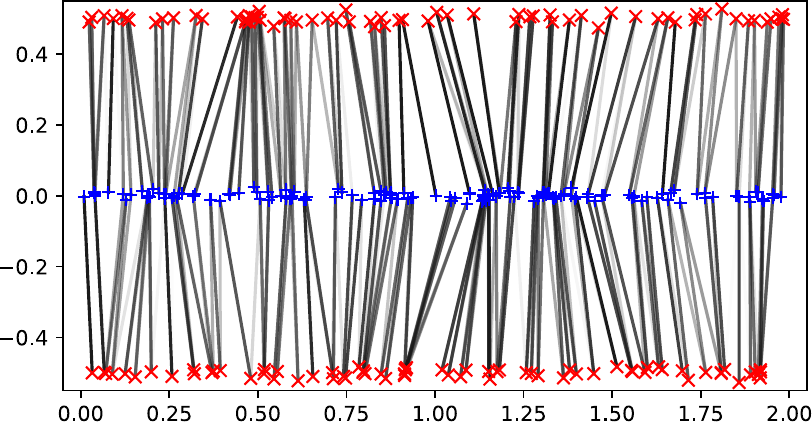}
  \caption{Initial transport plan $\pi_0$.}
\end{subfigure}\hfill
\begin{subfigure}[t]{.49\linewidth}
  \includegraphics[width=\linewidth]{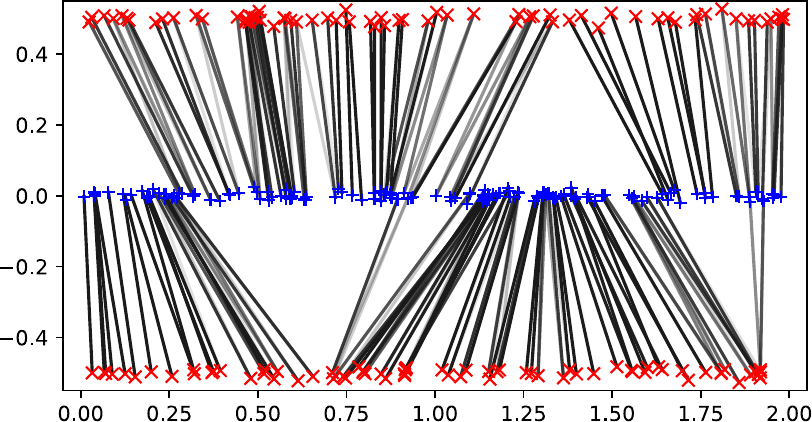}
  \caption{Final transport plan $\pi_{20}$.}
\end{subfigure}

\caption{A numerical result with the same underlying geometry, number of points and parameters as in Figure \ref{fig:lines}, but with the horizontal coordinates of the points sampled i.i.d.~from a uniform distribution, and added Gaussian noise of standard deviation $0.02$ in the vertical direction.}
\label{fig:random}
\end{figure}

\begin{rem}
An alternative approach would be to consider a biconvex relaxation of \eqref{eq:regularizedOT} by considering the double integral in the nonlocal term as formulated on two different transport plans, and then performing some alternating minimization method while keeping the two variables separate. This is a common procedure for the computation of Gromov-Wasserstein distances and some of these algorithms have been shown to admit convergence guarantees in terms of saddle points, see for example \cite{LiEtAl23}. In that case, such a biconvex relaxation is tight, with the proof relying on the fact that the Gromov-Wasserstein costs (based on either square distances or inner products) satisfy the negative definiteness conditions of \cite[Theorem~2]{SejViaPey21}, see \cite[Section~1.1.2]{DumLacVia25} for a detailed computation. In our case, assuming $q=2$ for simplicity, we would have to prove that 
\[
\int_{\R^{2d}}\int_{\R^{2d}} \omega(x,x') \, |y - y'|^2 \dd \lambda(x,y) \dd\lambda(x',y') \leq 0
\]
for $\lambda$ any signed measure on $\R^d \times \R^d$ with vanishing marginals. Expanding the square and using these marginal conditions the terms involving only $y$ or $y'$ vanish, so the above is equivalent to
\[
\int_{\R^{2d}}\int_{\R^{2d}} \omega(x,x') \, y \cdot y' \dd \lambda(x,y) \dd\lambda(x',y') \geq 0.
\]
In the special case of \eqref{eq:regker}, $s=1$, $\alpha \leq 1/2$ and arbitrary space dimension $d$, this positive-definiteness condition holds even without vanishing marginals. Moreover, since the integrand is continuous in this case, by approximation with discrete measures (with weak* continuity being preserved in the product as in the proof of Proposition \ref{prop:existence}) it is enough to check this condition for discrete measures, that is, whether this integrand is a positive-definite kernel. To see this, notice that, the linear kernel $y \cdot y'$ is positive-definite and the product kernel on the Cartesian product remains positive-definite if both factors are, so it is sufficient to check that $\omega(x,x') = (|x-x'|^{s+2\alpha}+b)^{-1}$ is a positive-definite kernel. To see the latter, we can use \cite[Theorem~3]{Sch38} which states that this choice of $\omega$ is a positive-definite kernel for all $d \in \N$ if and only if the function
\[h(t) := \frac{1}{t^{s/2+\alpha}+b}\]
is completely monotone, that is $h$ is continuous at $0$ and $(-1)^n h^{(n)} \geq 0$ for all $n \in \N \cup \{0\}$ and all $t>0$. This follows from \cite[Theorem~3.6]{SchSonVon10}, since it is the composition of $t \mapsto 1/(t+b)$ which is completely monotone, and $t \mapsto t^{s/2+\alpha}$, which is a Bernstein function (that is, its first derivative is completely monotone) if and only if $s/2+\alpha \leq 1$.
\end{rem}

\section*{Acknowledgments}
M.C. and H.S. acknowledge the support of NWO through the Vidi grant \emph{SPARGO: Exploring and Exploiting the Geometric Landscape of Infinite-Dimensional Sparse Optimization} (Grant Number VI.Vidi.243.200). H.S. was funded by the Fonds de la Recherche Scientifique - FNRS through a MIS-Ulysse project (Scientific Impulse Mandate Instrument)
number F.6002.25.
\bibliographystyle{abbrv}
\bibliography{nonlocalOT}

\end{document}